%------------------------------------------------------------------------------
% Beginning of journal.tex
%------------------------------------------------------------------------------
%
% AMS-LaTeX version 2 sample file for journals, based on amsart.cls.
%
%        ***     DO NOT USE THIS FILE AS A STARTER.      ***
%        ***  USE THE JOURNAL-SPECIFIC *.TEMPLATE FILE.  ***
%
% Replace amsart by the documentclass for the target journal, e.g., tran-l.
%
%\documentclass{mcom-l}
\documentclass{amsart}

\usepackage{amsfonts}
\usepackage{bm}
\usepackage{bbm}

\usepackage[scr=rsfs]{mathalpha}

\usepackage{bm}

\usepackage{rotfloat}

\usepackage{lscape}
\usepackage{hyperref}
\usepackage{tcolorbox}
\usepackage{subcaption}
\usepackage{graphicx}
\usepackage{enumitem}   

\usepackage{cleveref}
\usepackage{mathtools}
\usepackage{geometry}   
\usepackage{framed,multirow,booktabs}

\usepackage{array}

\geometry{top=2.5cm,bottom=2.39cm,left=3.34cm,right=3.34cm} %

\usepackage{tikz}
\usetikzlibrary{positioning}
\usetikzlibrary{shapes, arrows}
\tikzstyle{terminator} = [rectangle, draw, text centered, rounded corners, minimum height=2em]
\tikzstyle{process} = [rectangle, draw, text centered, minimum height=2em]
\tikzstyle{decision} = [diamond, draw, text centered, minimum height=2em]
\tikzstyle{data}=[trapezium, draw, text centered, trapezium left angle=60, trapezium right angle=120, minimum height=2em]
\tikzstyle{connector} = [draw, -latex, thick]

\newtheorem{theorem}{Theorem}[section]
\newtheorem{lemma}[theorem]{Lemma}

\newtheorem{corollary}{Corollary}[section]

\theoremstyle{remark}
\newtheorem{rmk}[theorem]{Remark}
\newtheorem{example}[theorem]{\bf Example}

\theoremstyle{problem}

\theoremstyle{definition}

\numberwithin{equation}{section}

%    Absolute value notation

%    Blank box placeholder for figures (to avoid requiring any
%    particular graphics capabilities for printing this document).

\newcommand{\dx}{\Delta x}
\newcommand{\dy}{\Delta y}
\newcommand{\dt}{\Delta t}
\newcommand{\bU}{{\bm u}}
\newcommand{\bF}{{\bm f}}
\newcommand{\bG}{{\bm g}}
\newcommand{\bd}{{\bm d}}
\newcommand{\bx}{{\bm x}}

\newcommand{\G}{\mathcal{G}}

\newcommand*\xbar[1]{%
	\hbox{%
		\vbox{%
			\hrule height 0.5pt % The actual bar
			\kern0.3ex%         % Distance between bar and symbol
			\hbox{%
				\kern-0.05em%      % Shortening on the left side
				\ensuremath{#1}%
				\kern-0.05em%      % Shortening on the right side
			}%
		}%
	}%
}

\begin{document}
\global\pdfpageattr\expandafter{\the\pdfpageattr/Rotate 0}

\title[Bound-Preserving Framework for Central-Upwind Schemes]{Bound-Preserving Framework for Central-Upwind Schemes for General Hyperbolic Conservation Laws}

%    Information for first author
\author{Shumo Cui}
%    Address of record for the research reported here
\address{Shenzhen International Center for Mathematics, Southern University of Science and Technology, Shenzhen 518055, China}
\email{cuism@sustech.edu.cn}
%    \thanks will become a 1st page footnote.
%\thanks{The first author was supported in part by NSF Grant \#000000.}

\author{Alexander Kurganov}
\address{Department of Mathematics, Shenzhen International Center for Mathematics, and Guangdong Provincial Key Laboratory of Computational Science and Material Design, Southern University of Science and Technology, Shenzhen 518055, China}
\email{alexander@sustech.edu.cn}

%    Information for third author
\author{Kailiang Wu}
\address{Corresponding author; Department of Mathematics and Shenzhen International Center for Mathematics, Southern University of Science and Technology, and National Center for Applied Mathematics Shenzhen (NCAMS), Shenzhen 518055, China}
\email{wukl@sustech.edu.cn}
\thanks{
	The work of S. Cui was supported in part by Shenzhen Science and Technology Program (grant No. RCJC20221008092757098).
	The work of A. Kurganov was supported in part by NSFC grant 12171226 and by the fund of the Guangdong Provincial Key Laboratory of Computational Science and Material Design (No. 2019B030301001). 
	The work of K. Wu was supported in part by Shenzhen Science and Technology	Program (grant No. RCJC20221008092757098) and NSFC grant 12171227.}

%    General info
\subjclass[2020]{Primary 65M08, 76M12, 35L65, 35Q31.} %; Secondary

%\date{January 1, 2001 and, in revised form, June 22, 2001.}

%\dedicatory{This paper is dedicated to our advisors.}

\keywords{Bound-preserving schemes, geometric quasilinearization (GQL), central-upwind schemes, hyperbolic systems of conservation laws, Euler equations of gas dynamics.}

\begin{abstract}
	Central-upwind (CU) schemes are Riemann-problem-solver-free finite-volume methods widely applied to a variety of hyperbolic systems of PDEs.
	Exact solutions of these systems typically satisfy certain bounds, and it is highly desirable or even crucial for the numerical schemes to
	preserve these bounds. In this paper, we develop and analyze bound-preserving (BP) CU schemes for general hyperbolic systems of conservation
	laws. Unlike many other Godunov-type methods, CU schemes cannot, in general, be recast as convex combinations of first-order BP schemes.
	Consequently, standard BP analysis techniques are invalidated. We address these challenges by establishing a novel framework for analyzing
	the BP property of CU schemes. To this end, we discover that the CU schemes can be decomposed as a convex combination of several
	intermediate solution states. Thanks to this key finding, the goal of designing BPCU schemes is simplified to the enforcement of four more
	accessible BP conditions, each of which can be achieved with the help of a minor modification of the CU schemes. We employ the proposed
	approach to construct provably BPCU schemes for the Euler equations of gas dynamics. The robustness and effectiveness of the BPCU schemes
	are validated by several demanding numerical examples, including high-speed jet problems, flow past a forward-facing step, and a shock
	diffraction problem. 
\end{abstract}

\maketitle

%\section*{This is an unnumbered first-level section head}
%This is an example of an unnumbered first-level heading.

%% The correct journal style for \specialsection is all uppercase; a known bug
%% in amsart.cls prevents this, so input must be uppercase until it is fixed.
%\specialsection*{This is a Special Section Head}
%\specialsection*{THIS IS A SPECIAL SECTION HEAD}
%This is an example of a special section head%
%%%%%%%%%%%%%%%%%%%%%%%%%%%%%%%%%%%%%%%%%%%%%%%%%%%%%%%%%%%%%%%%%%%%%%%%
%\footnote{Here is an example of a footnote. Notice that this footnote
%text is running on so that it can stand as an example of how a footnote
%with separate paragraphs should be written.
%\par
%And here is the beginning of the second paragraph.}%
%%%%%%%%%%%%%%%%%%%%%%%%%%%%%%%%%%%%%%%%%%%%%%%%%%%%%%%%%%%%%%%%%%%%%%%%

\section{Introduction}
This paper focuses on the development of high-order robust numerical schemes for hyperbolic systems of conservation laws  
\begin{equation}
	\bU_t+\nabla_\bx\cdot\bF(\bU)=\bm0, 
	\label{1}
\end{equation}
where $\bx\in\mathbb R^d$ are the spatial variables, $t\in\mathbb R^+$ denotes time, $\bU\in\mathbb R^m(\bx,t)$ is an unknown
vector-function, and $\bF=(f_1,\dots,f_d)^\top\in(\mathbb R^m)^d$ are the fluxes.

Exact solutions of \eqref{1} often satisfy certain bounds that define a convex invariant region ${\G}\in\mathbb R^m$. For instance, the
entropy solution of scalar conservation laws satisfies the maximum principle, which means that it always stays within the invariant region
${\G}=[\min_{\bx}\bU(\bx,0),\max_{\bx}\bU(\bx,0)]$. For the Euler equations of gas dynamics, the corresponding invariant region
${\G}$ includes all admissible states with positive density and pressure. For notational convenience, we assume that the convex
invariant region ${\G}$ of equation \eqref{1} can be expressed as 
\begin{equation*}
	{\G}:=\big\{\,\bU\in\mathbb R^m: \varphi_i(\bU)>0, i\in\mathbb I~~\mbox{and}~~\varphi_i(\bU)\ge0, i\in\hat{\mathbb I}\,\big\}
\end{equation*}
with continuous functions $\{\varphi_i(\bU)\}$. It is highly desirable or even crucial to ensure that the numerical solutions of \eqref{1}
also stay within the invariant region ${\G}$. 

It is well-known that for scalar conservation laws, first-order monotone schemes are bound-preserving (BP) under a suitable CFL condition.
Some of these first-order schemes have also been shown to be BP for many hyperbolic systems. However, constructing high-order accurate BP\
schemes is a considerably more challenging task. A general framework for constructing high-order BP discontinuous Galerkin and
finite-volume schemes was proposed in \cite{ZhangShu2010_PP,zhang2010} for rectangular meshes, and later extended to triangular meshes in
\cite{ZhangXiaShu2012}. In this framework, high-order schemes were reformulated as convex combinations of first-order BP schemes, resulting
in the BP property of high-order schemes through the convexity of the invariant region. This framework has been extensively applied to
various hyperbolic systems; see, e.g.,
\cite{xing2010positivity,wang2012robust,zhang2013maximum,QinShu2016,ZHANG2017301,Wu2017,jiang2018invariant,du2019high,du2019third}. Another
approach for constructing high-order BP schemes is the BP flux limiting approach \cite{Xu2014,hu2013positivity}, which aims to design a
suitable convex combination of a first-order BP flux and a high-order flux to achieve both the BP property and a high order of accuracy
simultaneously. This approach has been successfully applied to various conservation laws, including scalar conservation laws \cite{Xu2014},
compressible Euler equations \cite{hu2013positivity,xiong2016parametrized}, and special relativistic hydrodynamics \cite{WuTang2015}. For
more developments on high-order BP schemes, we refer the reader to \cite{zhang2011b,XuZhang2017,guermond2017invariant,yan2023efficient}.

Another challenge in analyzing and constructing BP schemes lies in the complexity of invariant region ${\G}$, which may exhibit very
complicated explicit formula involving high nonlinearity, not to mention the cases where it can only be defined implicitly \cite{Wu2021GQL}.
Recently, motivated by a series of BP efforts \cite{WuTangM3AS,Wu2017a,WuShu2018,WuShu2019,WuShu2020NumMath} for magnetohydrodynamic (MHD)
equations, the geometric quasilinearization (GQL) framework was proposed in \cite{Wu2021GQL} to address BP problems involving highly
nonlinear or even implicit constraints. The GQL approach was also applied in \cite{wu2021minimum} to design BP schemes that preserve the
invariant region for relativistic hydrodynamics. In addition, the GQL framework was utilized in \cite{WuJiangShu2022} to explore BP central
discontinuous Galerkin schemes for the ideal MHD equations. 

Central-upwind (CU) schemes belong to the class of Riemann-problem-solver-free Godunov-type schemes. Compared with the staggered central
schemes \cite{AVM,JT,LPR99,LT,NessyahuTadmor1990}, CU schemes incorporate an upwinding information on local speeds of propagation to reduce
the amount of numerical dissipation present in staggered central schemes and to admit a particularly simple semi-discrete form. CU schemes
were introduced in \cite{Kurganov2001,KTcl,KTrp} and then, in \cite{KurganovLin2007}, where a more accurate projection step was proposed,
leading to a ``built-in'' anti-diffusion term. The numerical dissipation present in CU schemes was further reduced by modifying the CU
numerical fluxes using the local characteristic decomposition \cite{CCHKL} and by implementing a subcell approach in the projection step,
which leads to a modified, more sharper ``built-in'' anti-diffusion term \cite{KurXin}. The latter two CU schemes, however, cannot be viewed
as ``black-box'' solvers for general hyperbolic systems of conservation laws as they are constructed for each particular system. Therefore,
in this paper, we focus on the CU schemes from \cite{KurganovLin2007} and their two-dimensional (2-D) modification from
\cite{ChertockCuiKurganovOzcanTadmor2018}.

CU schemes have been applied to various hyperbolic conservation laws or balance laws and it was shown that they satisfy some of the BP
properties such as positivity preserving of the density for the Euler equations of gas dynamics and water depth for a variety of shallow 
water and related models. This paper presents a novel generic framework for analyzing and constructing BPCU schemes for general hyperbolic
systems of conservation laws. The main challenges in achieving this goal arise from the presence of the ``built-in'' anti-diffusion terms,
which makes it impossible to recast the CU schemes as convex combinations of first-order BP schemes, thereby invalidating some standard BP
analysis techniques. In order to overcome these challenges, we introduce a novel convex decomposition of the CU schemes and find a
sufficient condition of the desired BP property. Thanks to this key discovery, the goal of constructing BPCU schemes is simplified into four
more accessible tasks, each of which can be accomplished by suitable adjustments to the original CU schemes. Our findings in this paper
include the following:

\begin{itemize}[leftmargin = 1em]
	
\item We propose a novel convex decomposition of the CU schemes, which rewrites them as a convex combination of several intermediate solution states. This formulation overcomes the challenge posed by the complex formula of the CU numerical fluxes and facilitates the BP
property analysis;

\item We establish a sufficient condition for the BP property of the CU schemes, which only requires the BP property of the intermediate solution states. This finding decomposes the goal of constructing BPCU schemes into four more accessible tasks;

\item  We develop a BP framework for CU schemes that summarizes our findings. This framework includes a step-by-step procedure for modifying the original CU schemes into BPCU schemes, as well as a set of theoretical results that can be used to prove rigorously the BP property of the BPCU schemes. This BP framework is applicable to general hyperbolic systems of conservation laws;

\item We apply the proposed BP framework to construct provably BPCU schemes for the Euler equations of gas dynamics. The analysis of these BPCU schemes is novel and nontrivial, involving both technical estimates and the GQL approach;

\item We implement our BPCU schemes and demonstrate their robustness and effectiveness on several demanding numerical examples including high-speed jet problems, flow past a forward-facing step, and a shock diffraction problem. 
\end{itemize}

\smallskip
This paper is organized as follows. In \S\ref{sec2}, we discuss the one-dimensional (1-D) BPCU schemes. Specifically, we provide a review of
the 1-D CU schemes in \S\ref{sec21}; propose the BP framework for the 1-D CU schemes in \S\ref{sec22}; and utilize the BP framework to
construct the BPCU scheme for 1-D Euler equations of gas dynamics in \S\ref{sec23}. In \S\ref{sec3}, we extend the BP framework to the
2-D case. Several numerical tests are presented in \S\ref{sec4} to demonstrate the effectiveness and robustness of the proposed BPCU
schemes. Finally, we conclude the paper in \S\ref{sec5} by summarizing our contributions.

\section{One-Dimensional BPCU Schemes}\label{sec2}
\subsection{CU Schemes: a Brief Overview}\label{sec21}
In this section, we provide a brief overview of the 1-D CU scheme from \cite{KurganovLin2007} for
\begin{equation}
	\bU_t+\bF(\bU)_x=\bm0.
	\label{1f}
\end{equation}

Let $\big\{\Omega_j:=[x_{j-\frac12},x_{j+\frac12}]\big\}$ be a uniform partition of the 1-D spatial domain with the spatial step-size
$\dx=x_{j+\frac12}-x_{j-\frac12}$. We denote by $\,\xbar\bU^{\,n}_j$ the approximation of the cell average of $\bU(x,t)$ in the cell
$\Omega_j$ at the time level $t=t^n$. With the discussion on high-order temporal discretizations deferred to \Cref{rem24}, we first
consider the semi-discrete CU scheme coupled with forward Euler temporal discretization, namely,
\begin{equation}
	\xbar\bU^{\,n+1}_j=\,\xbar\bU^{\,n}_j-\lambda^n\big(\hat\bF_{j+\frac12}-\hat\bF_{j-\frac12}\big),\quad\lambda^n:=\frac{\dt^n}{\dx},~~
	\dt^n:=t^{n+1}-t^n
	\label{2}
\end{equation}
with the CU numerical fluxes
\begin{equation}
	\hat\bF_{j+\frac12}=\frac{\sigma^+_{j+\frac12}\bF\big(\bU^-_{j+\frac12}\big)-\sigma^-_{j+\frac12}\bF\big(\bU^+_{j+\frac12}\big)}
	{\sigma^+_{j+\frac12}-\sigma^-_{j+\frac12}}+\frac{\sigma^+_{j+\frac12}\sigma^-_{j+\frac12}}{\sigma^+_{j+\frac12}-\sigma^-_{j+\frac12}}
	\big(\bU^+_{j+\frac12}-\bU^-_{j+\frac12}-\bd_{j+\frac12}\big).
	\label{3}
\end{equation}
Notice that the numerical fluxes $\hat\bF_{j+\frac12}$ are computed at the time level $t=t^n$, but we omit the time-dependence of all of the
half-integer index quantities in \eqref{3} and below for the sake of brevity. In \eqref{3}, $\bU^-_{j+\frac12}$ and $\bU^+_{j+\frac12}$ are
the left- and right-sided values of the reconstructed solution at the cell interface $x=x_{j+\frac12}$. If a piecewise linear reconstruction
is used, which is the case when one is interested in designing a second-order scheme, these one-sided point values are given by
\begin{equation}
	\bU^-_{j+\frac12}=\,\xbar{\bU}_j^{\,n}+\frac{\dx}{2}(\bU_x)_j^n,\quad
	\bU^+_{j+\frac12}=\,\xbar{\bU}_{j+1}^{\,n}-\frac{\dx}{2}(\bU_x)_{j+1}^n.
	\label{4}
\end{equation}
In order to prevent large magnitude oscillations near the discontinuities in the numerical solution, the slopes of the piecewise linear
reconstruction $(\bU_x)_j^n$ are typically computed using a nonlinear limiter. For instance, one may use the generalized minmod limiter
(see, e.g., \cite{Sweby1984,LieNoelle2003,NessyahuTadmor1990}):
\begin{equation}
	(\bU_x)_j^n={\rm minmod}\left(\theta\,\frac{\xbar{\bU}_j^{\,n}-\,\xbar{\bU}_{j-1}^{\,n}}{\dx},
	\frac{\xbar{\bU}_{j+1}^{\,n}-\,\xbar{\bU}_{j-1}^{\,n}}{2\dx},\theta\,\frac{\xbar{\bU}_{j+1}^{\,n}-\,\xbar{\bU}_j^{\,n}}{\dx}\right),
	\label{5}
\end{equation}
where $\theta\in[1,2]$ is a parameter that regulates the amount of numerical dissipation: Larger values of $\theta$ generally lead to a less
dissipative resulting scheme. The minmod function is defined by
$$
{\rm minmod}(c_1,c_2,\ldots):=\begin{cases}\min\{c_1,c_2,\ldots\}&\mbox{ if } c_i>0~~\forall i,\\
	\max\{c_1,c_2,\ldots\}&\mbox{ if } c_i<0~~\forall i,\\0&~~\mbox{otherwise,}\end{cases}
$$
and is applied in \eqref{5} in a component-wise manner.

The quantities $\sigma^\pm_{j+\frac12}$ in \eqref{3} denote the one-sided local speeds of propagation, and they can be estimated by 
\begin{equation}
	\sigma^-_{j+\frac12}=\min\big\{\lambda_1\big(\bU^-_{j+\frac12}\big),\lambda_1\big(\bU^+_{j+\frac12}\big),0\big\},\quad
	\sigma^+_{j+\frac12}=\max\big\{\lambda_m\big(\bU^-_{j+\frac12}\big),\lambda_m\big(\bU^+_{j+\frac12}\big),0\big\},
	\label{6}
\end{equation}
where $\lambda_1(\bU)$ and $\lambda_m(\bU)$ are the smallest and largest eigenvalues of the Jacobian $\partial\bF/\partial\bU$. We note that
for some hyperbolic systems (for instance, for the MHD equations \cite{Wu2017a,WuShu2019}), one may need to slightly underestimate 
$\sigma^-_{j+\frac12}$ and overestimate $\sigma^+_{j+\frac12}$ to ensure the BP property.
\begin{rmk}\label{rem21}
	As we need to divide by $\sigma^+_{j+\frac12}-\sigma^-_{j+\frac12}$ in the CU numerical flux \eqref{3}, the computation of the one-sided
	local speeds of propagation \eqref{6} must be desingularized. For instance, for those $j$ at which
	$\sigma^+_{j+\frac12}-\sigma^-_{j+\frac12}<\varepsilon$, one may replace $\sigma^\pm_{j+\frac12}$ with
	$\sigma^\pm_{j+\frac12}=\pm\varepsilon$, where $\varepsilon$ is a very small positive number.
\end{rmk}

Finally, the quantity $\bd_{j+\frac12}$ in \eqref{3} denotes the ``built-in'' anti-diffusion and is given by
\begin{equation}
	\bd_{j+\frac12}:={\rm minmod}\,\big(\bU^+_{j+\frac12}-\bU^*_{j+\frac12},\bU^*_{j+\frac12}-\bU^-_{j+\frac12}\big),
	\label{7}
\end{equation}
where
\begin{equation}
	\begin{aligned}
		\bU^*_{j+\frac12}=&R_f\big(\sigma^+_{j+\frac12},\sigma^-_{j+\frac12},\bU^+_{j+\frac12},\bU^-_{j+\frac12}\big),\\
		R_f\big(\sigma^+,\sigma^-,\bU^+,\bU^-\big):=&\frac{\sigma^+\bU^+-\sigma^-\bU^{-}-\bF(\bU^+)+\bF(\bU^-)}{\sigma^+-\sigma^-}.
	\end{aligned}
	\label{8}
\end{equation}
\begin{rmk}
	It was shown in \cite{BKLP} that the CU numerical flux \eqref{3}--\eqref{8} for scalar conservation laws is monotone. According to
	\cite{zhang2010}, this implies that the CU scheme \eqref{2}--\eqref{8} for scalar conservation laws is provably BP under the CFL condition
	\eqref{11}. In fact, it is also the case if the high-order strong stability preserving (SSP) Runge-Kutta or multistep method
	(\!\!\cite{GottliebKetchesonShu2011,GST}) is used instead of the forward Euler method for time integration.
\end{rmk}

\subsection{BP Framework for CU Schemes}\label{sec22}
The CU scheme reviewed in \S\ref{sec21} is {\em not} BP when applied to general hyperbolic systems of conservation laws. In this section,
we propose a systematic framework for constructing BPCU schemes for general hyperbolic systems \eqref{1f}. 

Due to the presence of the anti-diffusion terms $\bm d_{j+\frac12}$ in the CU numerical fluxes \eqref{3}, the standard BP approach, which is
typically based on rewriting a high-order numerical flux into a convex combination of formally first-order BP fluxes, becomes invalid in the
system case. In order to overcome this difficulty, we will first establish a novel technical convex decomposition to analyze the BP property
of the CU scheme and identify the challenges in achieving the desired BP property. We will then propose a series of modifications of the CU
scheme resulting in provably BPCU schemes.

We begin with introducing the notations
\begin{equation}
	\bU^{*,\pm}_{j+\frac12}:=\bU^*_{j+\frac12}-\frac{\sigma^\pm_{j+\frac12}}{\sigma^+_{j+\frac12}-\sigma^-_{j+\frac12}}\bd_{j+\frac12},\quad
	\bU^*_j:=R_f\big(\sigma^+_{j+\frac12},\sigma^-_{j-\frac12},\bU^-_{j+\frac12},\bU^+_{j-\frac12}\big),
	\label{9}
\end{equation}
and establish the following theorem, which is crucial for the development of BPCU schemes.
\begin{theorem}\label{thm21}
	The CU scheme \eqref{2}--\eqref{8} admits the following convex decomposition:
	\begin{equation}
		\begin{aligned} 
			\xbar\bU^{\,n+1}_j&=\Big(\frac12-\lambda^n\big(\sigma^+_{j-\frac12}-\sigma^-_{j-\frac12}\big)\Big)\bU^+_{j-\frac12}+
			\lambda^n\sigma^+_{j-\frac12}\bU^{*,-}_{j-\frac12}\\
			&+\Big(\frac12-\lambda^n\big(\sigma^+_{j+\frac12}-\sigma^-_{j+\frac12}\big)\Big)
			\bU^-_{j+\frac12}-\lambda^n\sigma^-_{j+\frac12}\bU^{*,+}_{j+\frac12}
			+\lambda^n\big(\sigma^+_{j+\frac12}-\sigma^-_{j-\frac12}\big)\,\bU_j^*
		\end{aligned}
		\label{10}
	\end{equation}
	under the CFL condition
	\begin{equation}
		\lambda^n\sigma\le\frac12,\quad\sigma:=\max_j\big\{\sigma^+_{j+\frac12}-\sigma^-_{j+\frac12}\big\}.
		\label{11}
	\end{equation}
\end{theorem}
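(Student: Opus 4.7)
The plan is to verify \eqref{10} as an algebraic identity and then check nonnegativity of the five coefficients under the CFL restriction \eqref{11}. The starting observation is that the piecewise linear reconstruction \eqref{4} satisfies the ``interface-midpoint'' identity
\begin{equation*}
  \xbar{\bU}^{\,n}_j=\tfrac12\bigl(\bU^+_{j-\frac12}+\bU^-_{j+\frac12}\bigr),
\end{equation*}
which is where the two $\tfrac12$'s in \eqref{10} will come from. Everything else is an algebraic manipulation of the CU flux \eqref{3}.

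Next, I would rewrite the CU flux in two equivalent HLL-style forms. Using the definition \eqref{8} of $\bU^*_{j+\frac12}$ to eliminate $\bF(\bU^\pm_{j+\frac12})$ gives the familiar identities
\begin{equation*}
  \hat\bF_{j+\frac12}=\bF\bigl(\bU^-_{j+\frac12}\bigr)+\sigma^-_{j+\frac12}\bigl(\bU^*_{j+\frac12}-\bU^-_{j+\frac12}\bigr)-\tfrac{\sigma^+_{j+\frac12}\sigma^-_{j+\frac12}}{\sigma^+_{j+\frac12}-\sigma^-_{j+\frac12}}\bd_{j+\frac12},
\end{equation*}
and symmetrically with $\bU^+_{j+\frac12}$, $\sigma^+_{j+\frac12}$. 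Using the definition of $\bU^{*,\pm}_{j+\frac12}$ in \eqref{9}, the anti-diffusion term gets absorbed and one obtains the two clean representations
\begin{equation*}
  \hat\bF_{j+\frac12}=\bF\bigl(\bU^-_{j+\frac12}\bigr)+\sigma^-_{j+\frac12}\bigl(\bU^{*,+}_{j+\frac12}-\bU^-_{j+\frac12}\bigr)=\bF\bigl(\bU^+_{j+\frac12}\bigr)+\sigma^+_{j+\frac12}\bigl(\bU^{*,-}_{j+\frac12}-\bU^+_{j+\frac12}\bigr).
\end{equation*}

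Now I would substitute these into \eqref{2}, using the ``$-$'' representation for $\hat\bF_{j+\frac12}$ and the ``$+$'' representation for $\hat\bF_{j-\frac12}$, together with the midpoint identity for $\xbar{\bU}^{\,n}_j$. After collecting terms, everything reduces to a desired target plus the residual $\lambda^n\bigl[\bF(\bU^+_{j-\frac12})-\bF(\bU^-_{j+\frac12})\bigr]$. The crucial step is that the definition of $\bU^*_j$ in \eqref{9} is precisely the relation
\begin{equation*}
  \bF\bigl(\bU^+_{j-\frac12}\bigr)-\bF\bigl(\bU^-_{j+\frac12}\bigr)=\bigl(\sigma^+_{j+\frac12}-\sigma^-_{j-\frac12}\bigr)\bU^*_j-\sigma^+_{j+\frac12}\bU^-_{j+\frac12}+\sigma^-_{j-\frac12}\bU^+_{j-\frac12},
\end{equation*}
which, when inserted, produces exactly the right-hand side of \eqref{10}. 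This algebraic step is the only nonobvious part of the derivation and the main obstacle, since one has to design the definition of $\bU^*_j$ with mixed left/right speeds so that this identity actually yields the claimed decomposition.

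Finally I would verify convexity. The five coefficients in \eqref{10} are $\tfrac12-\lambda^n(\sigma^+_{j\mp\frac12}-\sigma^-_{j\mp\frac12})$, $\lambda^n\sigma^+_{j-\frac12}$, $-\lambda^n\sigma^-_{j+\frac12}$, and $\lambda^n(\sigma^+_{j+\frac12}-\sigma^-_{j-\frac12})$. Since $\sigma^+_{k+\frac12}\ge0\ge\sigma^-_{k+\frac12}$ by \eqref{6}, the last three are manifestly nonnegative, while the first two are nonnegative precisely under \eqref{11}. A direct sum of the five coefficients cancels to $1$, confirming convexity and completing the proof.
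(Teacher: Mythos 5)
Your proposal is correct and follows essentially the same route as the paper: both rewrite $\hat\bF_{j+\frac12}$ and $\hat\bF_{j-\frac12}$ in the one-sided forms $\bF(\bU^-_{j+\frac12})+\sigma^-_{j+\frac12}(\bU^{*,+}_{j+\frac12}-\bU^-_{j+\frac12})$ and $\bF(\bU^+_{j-\frac12})+\sigma^+_{j-\frac12}(\bU^{*,-}_{j-\frac12}-\bU^+_{j-\frac12})$, use the midpoint identity $\xbar\bU^{\,n}_j=\tfrac12(\bU^+_{j-\frac12}+\bU^-_{j+\frac12})$ from \eqref{4}, and absorb the residual flux difference into the $\bU^*_j$ term via its definition in \eqref{9}. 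Your explicit check that the five coefficients sum to one and are nonnegative under \eqref{11} is a welcome addition that the paper leaves implicit.
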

\begin{proof}
	We first rewrite the numerical flux \eqref{3} as follows:
	\begin{equation}
		\begin{aligned}
			\hat\bF_{j+\frac12}&=\bF\big(\bU^-_{j+\frac12}\big)-\sigma^-_{j+\frac12}\bU^-_{j+\frac12}\\
			&\hspace*{-0.2cm}+\sigma^-_{j+\frac12}\left(\frac{\sigma^+_{j+\frac12}\bU^+_{j+\frac12}-\sigma^-_{j+\frac12}\bU^-_{j+\frac12}-
				\bF\big(\bU^+_{j+\frac12}\big)+\bF\big(\bU^-_{j+\frac12}\big)}
			{\sigma^+_{j+\frac12}-\sigma^-_{j+\frac12}}-\frac{\sigma^+_{j+\frac12}}{\sigma^+_{j+\frac12}-\sigma^-_{j+\frac12}}\,\bd_{j+\frac12}\right)\\
			&=\bF\big(\bU^-_{j+\frac12}\big)-\sigma^-_{j+\frac12}\bU^-_{j+\frac12}+\sigma^-_{j+\frac12}
			\Big(\bU^*_{j+\frac12}-\frac{\sigma^+_{j+\frac12}}{\sigma^+_{j+\frac12}-\sigma^-_{j+\frac12}}\,\bd_{j+\frac12}\Big)\\
			&=\bF\big(\bU^-_{j+\frac12}\big)-\sigma^-_{j+\frac12}\bU^-_{j+\frac12}+\sigma^-_{j+\frac12}\bU_{j+\frac12}^{*,+}.
		\end{aligned}
		\label{12}
	\end{equation}
	Similarly, we obtain
	\begin{equation}
		\begin{aligned}
			\hat\bF_{j-\frac12}&=\bF\big(\bU^+_{j-\frac12}\big)-\sigma^+_{j-\frac12}\bU^+_{j-\frac12}\\
			&\hspace*{-0.2cm}+\sigma^+_{j-\frac12}\left(\frac{\sigma^+_{j-\frac12}\bU^+_{j-\frac12}-\sigma^-_{j-\frac12}\bU^-_{j-\frac12}-
				\bF\big(\bU^+_{j-\frac12}\big)+\bF\big(\bU^-_{j-\frac12}\big)}{\sigma^+_{j-\frac12}-\sigma^-_{j-\frac12}}-\frac{\sigma^-_{j-\frac12}}
			{\sigma^+_{j-\frac12}-\sigma^-_{j-\frac12}}\,\bd_{j-\frac12}\right)\\
			&=\bF\big(\bU^+_{j-\frac12}\big)-\sigma^+_{j-\frac12}\bU^+_{j-\frac12}+\sigma^+_{j-\frac12}\Big(\bU^*_{j-\frac12}-
			\frac{\sigma^-_{j-\frac12}}{\sigma^+_{j-\frac12}-\sigma^-_{j-\frac12}}\,\bd_{j-\frac12}\Big)\\
			&=\bF\big(\bU^+_{j-\frac12}\big)-\sigma^+_{j-\frac12}\bU^+_{j-\frac12}+\sigma^+_{j-\frac12}\bU_{j-\frac12}^{*,-}.
			\label{13}
		\end{aligned}
	\end{equation}
	Substituting the reformulated numerical fluxes \eqref{12} and \eqref{13} into \eqref{2} and using \eqref{4} yields
	$$
	\begin{aligned}
		&\xbar\bU_j^{\,n+1}=\frac12\big(\bU^+_{j-\frac12}+\bU^-_{j+\frac12}\big)-\lambda^n\big(\bF\big(\bU^-_{j+\frac12}\big)-
		\sigma^-_{j+\frac12}\bU^-_{j+\frac12}+\sigma^-_{j+\frac12}\bU_{j+\frac12}^{*,+}\big)\\
		&\hspace*{0.85cm}+\lambda^n\big(\bF\big(\bU^+_{j-\frac12}\big)-
		\sigma^+_{j-\frac12}\bU^+_{j-\frac12}+\sigma^+_{j-\frac12}\bU_{j-\frac12}^{*,-}\big)\\
		&=\Big(\frac12-\lambda^n\big(\sigma^+_{j-\frac12}-\sigma^-_{j-\frac12}\big)\Big)\bU^+_{j-\frac12}+
		\lambda^n\sigma^+_{j-\frac12}\bU^{*,-}_{j-\frac12}+\Big(\frac12-\lambda^n\big(\sigma^+_{j+\frac12}-\sigma^-_{j+\frac12}\big)\Big)
		\bU^-_{j+\frac12}\\
		&-\lambda^n\sigma^-_{j+\frac12}\bU^{*,+}_{j+\frac12}
		+\lambda^n\big(\sigma^+_{j+\frac12}-\sigma^-_{j-\frac12}\big)\,\frac{\sigma^+_{j+\frac12}\bU^-_{j+\frac12}-
			\sigma^-_{j-\frac12}\bU^+_{j-\frac12}-\bF\big(\bU^-_{j+\frac12}\big)+\bF\big(\bU^+_{j-\frac12}\big)}
		{\sigma^+_{j+\frac12}-\sigma^-_{j-\frac12}}\\
		&=\Big(\frac12-\lambda^n\big(\sigma^+_{j-\frac12}-\sigma^-_{j-\frac12}\big)\Big)\bU^+_{j-\frac12}+
		\lambda^n\sigma^+_{j-\frac12}\bU^{*,-}_{j-\frac12}+\Big(\frac12-\lambda^n\big(\sigma^+_{j+\frac12}-\sigma^-_{j+\frac12}\big)\Big)
		\bU^-_{j+\frac12}\\
		&-\lambda^n\sigma^-_{j+\frac12}\bU^{*,+}_{j+\frac12}+\lambda^n\big(\sigma^+_{j+\frac12}-\sigma^-_{j-\frac12}\big)\,\bU_j^*,
	\end{aligned}
	$$
	so that $\xbar\bU_j^{\,n+1}$ has been reformulated as a convex combination of $\bU^\mp_{j\pm\frac12}$, $\bU^{*,\pm}_{j\pm\frac12}$, and
	$\bU^*_j$ under the CFL condition \eqref{11}. The proof of the theorem is thus completed.
\end{proof}

Thanks to the convexity of ${\G}$, \Cref{thm21} immediately leads to the following sufficient condition for obtaining BPCU schemes.
\begin{corollary}\label{cor21}
	If $\bU^\pm_{j+\frac12}\in{\G}$, $\bU^{*,\pm}_{j+\frac12}\in{\G}$, $\bU^*_j\in{\G}$, for all $j$, then
	$\xbar\bU_j^{\,n+1}\in{\G}$ for all $j$ and the CU scheme \eqref{2}--\eqref{3} is BP under the CFL condition \eqref{11}.
\end{corollary}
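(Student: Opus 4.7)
The plan is to derive the corollary as an immediate consequence of Theorem \ref{thm21}, which already provides the crucial structural result that $\xbar\bU_j^{\,n+1}$ is a convex combination of the five intermediate states $\bU^+_{j-\frac12}$, $\bU^{*,-}_{j-\frac12}$, $\bU^-_{j+\frac12}$, $\bU^{*,+}_{j+\frac12}$, and $\bU^*_j$, provided the CFL condition \eqref{11} holds. The only remaining content is the elementary observation that convex combinations of elements of a convex set stay inside that set.

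First, I would invoke the decomposition \eqref{10} from Theorem \ref{thm21} and verify that each coefficient appearing in it is nonnegative under \eqref{11}. The coefficients $\lambda^n\sigma^+_{j-\frac12}$ and $-\lambda^n\sigma^-_{j+\frac12}$ are nonnegative by the sign conventions in \eqref{6}, and the coefficient $\lambda^n(\sigma^+_{j+\frac12}-\sigma^-_{j-\frac12})$ is nonnegative because $\sigma^+\ge 0\ge\sigma^-$. The two remaining coefficients $\tfrac12-\lambda^n(\sigma^+_{j\pm\frac12}-\sigma^-_{j\pm\frac12})$ are nonnegative precisely by the definition of $\sigma$ in \eqref{11} and the CFL bound $\lambda^n\sigma\le\tfrac12$.

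Next, I would check that the five coefficients sum to one. A quick bookkeeping collapses the $\lambda^n\sigma^+_{j-\frac12}$, $\lambda^n\sigma^-_{j-\frac12}$, $\lambda^n\sigma^+_{j+\frac12}$, and $\lambda^n\sigma^-_{j+\frac12}$ terms pairwise to zero, leaving the constant $\tfrac12+\tfrac12=1$. Combined with the nonnegativity established above, this confirms that \eqref{10} is indeed a genuine convex combination.

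Finally, since the assumption of the corollary gives $\bU^\pm_{j\pm\frac12},\bU^{*,\pm}_{j\pm\frac12},\bU^*_j\in\mathcal{G}$ and $\mathcal{G}$ is convex, the convex combination $\xbar\bU_j^{\,n+1}$ also lies in $\mathcal{G}$, yielding the BP property. I do not anticipate any genuine obstacle here: the heavy lifting was done in proving Theorem \ref{thm21}, and the only nontrivial bit is the cancellation in the coefficient sum, which is a short direct computation rather than a conceptual difficulty.
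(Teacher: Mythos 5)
Your proposal is correct and follows exactly the route the paper intends: the paper treats this corollary as an immediate consequence of Theorem \ref{thm21} together with the convexity of ${\G}$, and the verification you spell out (nonnegativity of the coefficients under the CFL condition and the telescoping of the $\lambda^n\sigma^\pm$ terms so the coefficients sum to one) is precisely what the proof of Theorem \ref{thm21} already asserts when it calls \eqref{10} a convex combination.
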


Inspired by \Cref{cor21}, we simplify the challenging goal of designing BPCU schemes into four more accessible tasks, that is, modifying the
CU schemes in such a way that the following four essential conditions are satisfied:
\begin{itemize}[leftmargin=42mm]
	\setlength\itemsep{0mm}
	\item[\bf 1-D BP Condition \#1:] If $\,\xbar\bU_j^{\,n}\in{\G}$ for all $j$, then $\bU^\pm_{j+\frac12}\in{\G}$ for all $j$;
	\item[\bf 1-D BP Condition \#2:] If $\bU^\pm_{j+\frac12}\in{\G}$ for all $j$, then $\bU^*_{j+\frac12}\in{\G}$ for all $j$;
	\item[\bf 1-D BP Condition \#3:] If $\bU^\pm_{j+\frac12}\in{\G}$ for all $j$, then $\bU^*_j\in{\G}$ for all $j$;
	\item[\bf 1-D BP Condition \#4:] If $\bU^\pm_{j+\frac12}\in{\G}$, $\bU^*_{j+\frac12}\in{\G}$ for all $j$, then
	$\bU^{*,\pm}_{j+\frac12}\in{\G}$ for all $j$.
\end{itemize}
In \S\ref{sec221}--\ref{sec224}, we will propose suitable modifications to simultaneously enforce these conditions.
\begin{rmk}
	Although $\bU^*_{j+\frac12}$ is not directly required in \eqref{10}, BP Condition \#2 is included as an indispensable step towards enforcing
	BP Condition \#4. Note that $\bU^*_{j+\frac12}$ is a convex combination of $\bU^{*,-}_{j+\frac12}$ and ${\bU}^{*,+}_{j+\frac12}$ since
	$
	\bU^*_{j+\frac12}=\frac{\sigma^+_{j+\frac12}\bU^{*,-}_{j+\frac12}-\sigma^-_{j+\frac12}\bU^{*,+}_{j+\frac12}}
	{\sigma^+_{j+\frac12}-\sigma^-_{j+\frac12}}.
	$
	Thus, due to the convexity of ${\G}$, BP Condition \#2 is a necessary condition for enforcing BP Condition \#4.
\end{rmk}

\subsubsection{BP Condition \#1: \texorpdfstring{$\bU^\pm_{j+\frac12}\in{\G}$}{$u^\pm_{j+\frac12}\in{\G}$}}\label{sec221}
We first define
$$
{\G}_\varepsilon:=\big\{\,\bU\in\mathbb R^m: \varphi_i(\bU)\ge\varepsilon_i, i\in\mathbb I~~\mbox{and}~~\varphi_i(\bU)\ge0,
i\in\hat{\mathbb I}\,\big\}\subset{\G},
$$
where the parameters $\varepsilon_i=\min\{10^{-13},\varphi_i(\,\xbar\bU_j^{\,n})\}$ for $i\in\mathbb I$ are introduced to avoid the effect
of round-off errors. It should be noted that ${\G}_\varepsilon$ may not be convex, but very close to the convex set ${\G}$. If
either $\bU^+_{j-\frac12}\not\in{\G}_\varepsilon$ or $\bU^-_{j+\frac12}\not\in{\G}_\varepsilon$, we use a local scaling BP limiter
\cite{ZhangShu2010_PP} to replace these point values with
\begin{equation*}
	\tilde\bU^+_{j-\frac12}=\,\xbar\bU_j^{\,n}+\delta_j\,\big(\bU^+_{j-\frac12}-\,\xbar\bU_j^{\,n}\big)\in{\G}_\varepsilon\quad\mbox{and}
	\quad
	\tilde\bU^-_{j+\frac12}=\,\xbar\bU_j^{\,n}+\delta_j\,\big(\bU^-_{j+\frac12}-\,\xbar\bU_j^{\,n}\big)\in{\G}_\varepsilon,
\end{equation*}
where $\delta_j=\Theta(\,\xbar\bU^{\,n}_j,\bU^+_{j-\frac12},\bU^-_{j+\frac12})$. The function $\Theta$ can be defined, for instance, as  
\begin{equation}
	\Theta(\,\xbar\bU,\bU^+,\bU^-)=\min\left\{\,\frac{\|\bm s_\varepsilon^+-\xbar\bU\|_2}{\|\bU^+-\xbar\bU\|_2},
	\frac{\|\bm s_\varepsilon^--\xbar\bU\|_2}{\|\bU^--\xbar\bU\|_2}\,\right\},
	\label{15}
\end{equation}
where the points $\bm s_\varepsilon^\pm$ are obtained as follows. Denote by $\psi^-$ the line segment connecting $\xbar\bU$ and $\bU^-$ and
by $\psi^+$ the line segment connecting $\xbar\bU$ and $\bU^+$. Then:

\noindent
$\bullet$ $\bm s_\varepsilon^-$ is the closest to $\xbar\bU$ point among $\bU^-$ and the intersection points (if any) of $\psi^-$ and
$\partial{\G}_\varepsilon$;

\noindent
$\bullet$ $\bm s_\varepsilon^+$ is the closest to $\xbar\bU$ point among $\bU^+$ and the intersection points (if any) of $\psi^+$ and
$\partial{\G}_\varepsilon$;

\noindent
as illustrated in \Cref{fig1}.
\begin{figure}[ht!]
	\centerline{\includegraphics[width=0.38\textwidth]{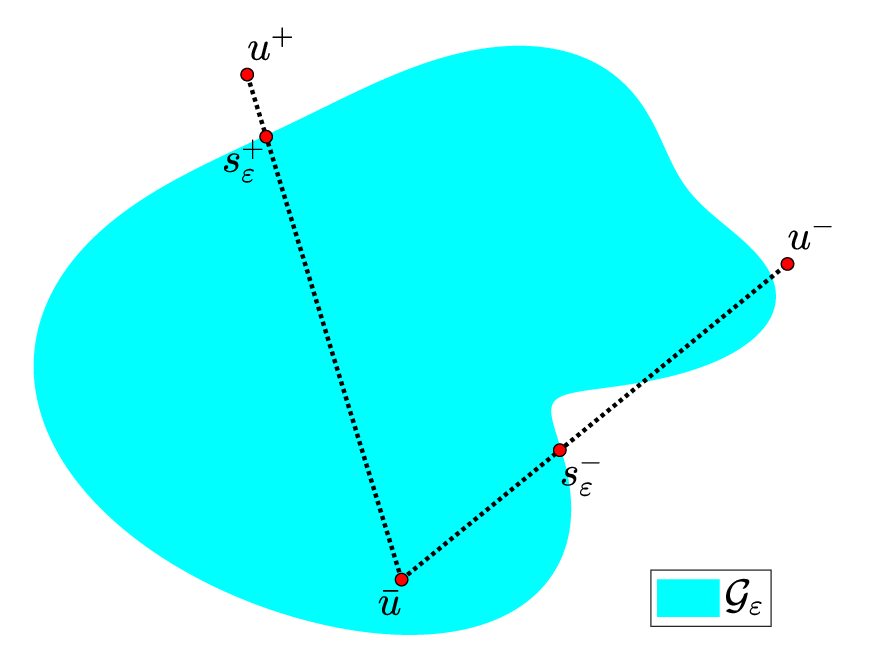}}
	\caption{\sf A possible configuration of $\,\xbar\bU$, $\bU^\pm$, and $\bm s^\pm_\varepsilon$ (not to scale).\label{fig1}}
\end{figure}

\begin{rmk}
	In many cases, the calculation of $\bm s^\pm_\varepsilon$ is cumbersome and may require a certain root-finding procedure. If the functions
	$\varphi_i(\bU)$ are all concave with respect to $\bU$, then the function $\Theta$ can be alternatively defined as
	\begin{equation}
		\begin{aligned}
			&\Theta(\,\xbar\bU,\bU^+,\bU^-)=\min_{i\in\mathbb I\cup\hat{\mathbb I}}\delta_i,\quad
			\tilde\varepsilon_i=\begin{cases}\varepsilon_i&\mbox{if}~i\in\mathbb I,\\0&\mbox{if}~i\in\hat{\mathbb I},\end{cases}\\
			&\delta_i=\begin{dcases}\frac{\varphi_i(\,\xbar\bU)-\tilde\varepsilon_i}{\varphi_i(\,\xbar\bU)-\min\{\varphi_i(\bU^+),\varphi_i(\bU^-)\}}&
				\mbox{if}~\min\{\varphi_i(\bU^+),\varphi_i(\bU^-)\}<\tilde\varepsilon_i,\\1&\mbox{otherwise}.\end{dcases}
		\end{aligned}
		\label{16}
	\end{equation}
\end{rmk}

\subsubsection{BP Condition \#2: \texorpdfstring{$\bU^*_{j+\frac12}\in{\G}$}{$u^*_{j+\frac12}\in{\G}$}}\label{sec222}
We begin by proving the following two auxiliary lemmas.
\begin{lemma}\label{lem22}
	If $\,\bU^-,\bU^+\in{\G}$, $\,\hat\sigma^-\le\sigma^-$, and $\,\hat\sigma^+\ge\sigma^+$, then
	$$R_f(\sigma^+,\sigma^-,\bU^+,\bU^-)\in{\G}~\Longrightarrow~R_f(\hat\sigma^+,\hat\sigma^-,\bU^+,\bU^-)\in{\G}.$$
\end{lemma}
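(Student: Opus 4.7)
The plan is to show that $R_f(\hat\sigma^+,\hat\sigma^-,\bU^+,\bU^-)$ can be written as a convex combination of the three points $\bU^+$, $\bU^-$, and $R_f(\sigma^+,\sigma^-,\bU^+,\bU^-)$, all of which lie in $\mathcal{G}$ by hypothesis. Convexity of $\mathcal{G}$ then immediately yields the conclusion.

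To carry this out, I would first denote $\bU^*:=R_f(\sigma^+,\sigma^-,\bU^+,\bU^-)$ and use its defining identity \eqref{8} to extract the ``flux difference'' term:
\begin{equation*}
-\bF(\bU^+)+\bF(\bU^-) = (\sigma^+-\sigma^-)\bU^* - \sigma^+\bU^+ + \sigma^-\bU^-.
\end{equation*}
Substituting this into the expression for $R_f(\hat\sigma^+,\hat\sigma^-,\bU^+,\bU^-)$ and regrouping terms in $\bU^+$, $\bU^-$, and $\bU^*$ should produce
\begin{equation*}
R_f(\hat\sigma^+,\hat\sigma^-,\bU^+,\bU^-) = \frac{(\hat\sigma^+-\sigma^+)\bU^+ + (\sigma^--\hat\sigma^-)\bU^- + (\sigma^+-\sigma^-)\bU^*}{\hat\sigma^+-\hat\sigma^-}.
\end{equation*}

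The final step is to verify that the three coefficients are nonnegative and sum to one. Nonnegativity follows from the assumptions $\hat\sigma^+\ge\sigma^+$ and $\hat\sigma^-\le\sigma^-$, together with $\sigma^-\le 0\le\sigma^+$ (which ensures $\sigma^+-\sigma^-\ge0$ and also keeps $\hat\sigma^+-\hat\sigma^->0$ so the denominator is valid, avoiding the desingularization issue of \Cref{rem21}). Summing the three numerators gives exactly $\hat\sigma^+-\hat\sigma^-$, so the combination is indeed convex, and convexity of $\mathcal{G}$ finishes the proof.

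I do not anticipate a genuine obstacle: the only subtle point is spotting the right convex decomposition, and once the flux difference is eliminated using the $\bU^*$ identity, the rest is direct algebra. The structural meaning is that enlarging the one-sided speeds in $R_f$ blends the ``true'' intermediate state $\bU^*$ back toward the two input states $\bU^\pm$, which is a natural monotonicity-in-speeds property underlying the subsequent BP modifications.
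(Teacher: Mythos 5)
Your proposal is correct and follows essentially the same route as the paper: the paper introduces $\varepsilon^+:=\hat\sigma^+-\sigma^+$ and $\varepsilon^-:=\sigma^--\hat\sigma^-$ and writes $R_f(\hat\sigma^+,\hat\sigma^-,\bU^+,\bU^-)$ as exactly the convex combination of $\bU^+$, $\bU^-$, and $R_f(\sigma^+,\sigma^-,\bU^+,\bU^-)$ that you derive. Your explicit verification that the coefficients are nonnegative and sum to one (and your remark on $\sigma^-\le0\le\sigma^+$ keeping the denominator positive) is a minor elaboration of what the paper leaves implicit.
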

\begin{proof}
	We first introduce the following notation: $\varepsilon^+:=\hat\sigma^+-\sigma^+\ge0$ and $\varepsilon^-:=\sigma^--\hat\sigma^-\ge0$, and
	then use the definition of $R_f$ in \eqref{8} to obtain
	$$
	\begin{aligned}
		R_f(\hat\sigma^+,\hat\sigma^-,\bU^+,\bU^-)&=\frac{\hat\sigma^+\bU^+-\hat\sigma^-\bU^--\bF(\bU^+)+\bF(\bU^-)}{\hat\sigma^+-\hat\sigma^-}\\
		&=\frac{(\sigma^+-\sigma^-)\,R_f(\sigma^+,\sigma^-,\bU^+,\bU^-)+\varepsilon^+\bU^++\varepsilon^-\bU^-}
		{\sigma^+-\sigma^-+\varepsilon^++\varepsilon^-}.
	\end{aligned}
	$$
	This means that $R_f(\hat\sigma^+,\hat\sigma^-,\bU^+,\bU^-)$ is a convex combination of $u^-$, $u^+$, and
	$R_f(\sigma^+,\sigma^-,\bU^+,\bU^-)$, which implies  that $R_f(\hat\sigma^+,\hat\sigma^-,\bU^+,\bU^-)\in{\G}$ due to the convexity of
	${\G}$. The proof is thus completed.	
\end{proof}
\begin{lemma}\label{lem23}
	For the system \eqref{1f} with a convex invariant region ${\G}$, there exist functions $\sigma^-(\bU^-,\bU^+)$ $ \le 0$ and
	$\sigma^+(\bU^-,\bU^+)\ge0$ satisfying the following condition:
	\begin{equation}
		R_f(\sigma^+(\bU^-,\bU^+),\sigma^-(\bU^-,\bU^+),\bU^+,\bU^-)\in{\G}~~\forall~\bU^-,\bU^+\in{\G}.
		\label{17}
	\end{equation}
\end{lemma}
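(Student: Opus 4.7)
The plan is to exhibit such functions via a Lax--Friedrichs-type choice, namely $\sigma^+(\bU^-,\bU^+) = s$ and $\sigma^-(\bU^-,\bU^+) = -s$, with $s > 0$ taken sufficiently large depending on $\bU^\pm$. With this symmetric choice, a direct calculation from \eqref{8} gives
$$R_f(s,-s,\bU^+,\bU^-) \;=\; \frac{\bU^+ + \bU^-}{2} + \frac{\bF(\bU^-)-\bF(\bU^+)}{2s}.$$
As $s \to \infty$ this tends to the midpoint $\tfrac{1}{2}(\bU^+ + \bU^-)$, which belongs to $\G$ by convexity. A pure continuity argument is, however, insufficient to conclude $R_f \in \G$ for finite $s$ when the midpoint lies on $\partial\G$ (for example, when $\varphi_i$ vanishes for some $i \in \hat{\mathbb I}$), so a more intrinsic justification is needed.

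The cleaner route I would adopt is to recognize $R_f(s,-s,\bU^+,\bU^-)$ as a space--time average of the exact Riemann solution. Integrating the conservation law $\bU_t + \bF(\bU)_x = \bm 0$ over the rectangle $[-st,\,st] \times [0,t]$, and using that for $s$ larger than every wave speed in the Riemann fan emanating from $x = 0$ the boundary traces satisfy $\bU(\pm st,\, t') \equiv \bU^\pm$, one obtains
$$R_f(s,-s,\bU^+,\bU^-) \;=\; \frac{1}{2st}\int_{-st}^{st}\bU(x,t)\,dx.$$
Because $\G$ is an invariant region, $\bU(x,t) \in \G$ pointwise, and the convexity of $\G$ then immediately yields $R_f \in \G$. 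This supplies the desired $\sigma^\pm(\bU^-,\bU^+)$; \Cref{lem22} moreover certifies that any enlargement of $|\sigma^\pm|$ preserves the property, so the choice extends to a function on the whole of $\G \times \G$.

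The main obstacle is precisely this invariance step: in full generality it tacitly invokes both the solvability of the Riemann problem with data $(\bU^-, \bU^+)$ and the invariance of $\G$ under the associated hyperbolic flow, neither of which is spelled out in the abstract definition of $\G$ given in the introduction. For each concrete system treated later in the paper --- in particular the Euler equations addressed in \S\ref{sec23} --- this ultimately reduces to the classical fact that a Lax--Friedrichs-type scheme with sufficiently large artificial viscosity is bound preserving, and the existence of the required $\sigma^\pm$ can be verified by an explicit computation on the right-hand side of the displayed identity for $R_f$ rather than by appealing to Riemann-problem theory.
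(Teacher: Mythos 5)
Your proposal is correct and follows essentially the same route as the paper: both identify $R_f$ with the average of the exact Riemann solution over a space--time rectangle enclosing the Riemann fan (via integrating the conservation law and using that the boundary traces are $\bU^\pm$), and conclude by invariance of ${\G}$ plus convexity. The only cosmetic difference is your symmetric Lax--Friedrichs choice $\sigma^\pm=\pm s$ versus the paper's one-sided $\sigma^+=\max\{\sigma_{\max},0\}$, $\sigma^-=\min\{\sigma_{\min},0\}$; your closing remark about the tacit reliance on Riemann solvability and on invariance of ${\G}$ applies equally to the paper's own argument, where invariance is part of the hypothesis.
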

\begin{proof}
	Consider the Riemann problem for the hyperbolic system \eqref{1f} with $\bU^-$ and $\bU^+$ being the left and right states at $t=0$. Denote
	the minimum and maximum speeds of the Riemann solution by $\sigma_{\min}(\bU^-,\bU^+)$ and $\sigma_{\max}(\bU^-,\bU^+)$. Due to the finite
	speed of propagation, the Riemann fan is enclosed by the interval
	$[T\sigma_{\min}(\bU^-,\bU^+),T\sigma_{\max}(\bU^-,\bU^+)]$ at time $t=T$. If we choose
	$\sigma^+(\bU^-,\bU^+):=\max\{\sigma_{\max}(\bU^-,\bU^+),0\}$ and $\sigma^-(\bU^-,\bU^+):=\min\{\sigma_{\min}(\bU^-,\bU^+),0\}$, then
	$R_f(\sigma^+(\bU^-,\bU^+),\sigma^-(\bU^-,\bU^+),\bU^+,\bU^-)$ is exactly the average of the solution of the Riemann problem at $t=T$ over
	the interval $[T\sigma^-,T\sigma^+]$. Since the solution of the Riemann problem $u(x,T)$ is contained in the invariant region ${\G}$,
	its average over $[T\sigma^-,T\sigma^+]$ is also in ${\G}$ due to the convexity of ${\G}$. Hence, the condition \eqref{17} is
	satisfied.
\end{proof}

Based on the results established in \Cref{lem22,lem23}, the BP Condition \#2 can be enforced by a proper choice of the one-sided local
speeds of propagation $\sigma^\pm_{j+\frac12}$ as stated in the following theorem.
\begin{theorem}\label{thm24}
	Given $\bU^\pm_{j+\frac12}\in{\G}$ for all $j$ and given functions $\sigma^+(\cdot,\cdot)$ and $\sigma^-(\cdot,\cdot)$ satisfying
	\eqref{17}, the BP Condition \#2 is satisfied if
	$$
	\sigma^+_{j+\frac12}\ge\sigma^+(\bU^-_{j+\frac12},\bU^+_{j+\frac12})\quad\mbox{and}\quad
	\sigma^-_{j+\frac12}\le\sigma^-(\bU^-_{j+\frac12},\bU^+_{j+\frac12})\quad\forall j.
	$$
\end{theorem}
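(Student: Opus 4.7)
The plan is to obtain the theorem by a direct chaining of \Cref{lem22,lem23}. First, I would fix an arbitrary interface index $j$ and set $\bU^-:=\bU^-_{j+\frac12}$ and $\bU^+:=\bU^+_{j+\frac12}$, both of which are assumed to lie in ${\G}$. By the hypothesis on $\sigma^\pm(\cdot,\cdot)$, \Cref{lem23} supplies a pair of speeds $\sigma^+(\bU^-,\bU^+)\ge 0$ and $\sigma^-(\bU^-,\bU^+)\le 0$ for which
\begin{equation*}
R_f\big(\sigma^+(\bU^-,\bU^+),\sigma^-(\bU^-,\bU^+),\bU^+,\bU^-\big)\in{\G}.
\end{equation*}
This is the ``seed'' BP state that the rest of the argument propagates to the numerical speeds actually used in the scheme.

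Next I would invoke \Cref{lem22} with the identifications $\hat\sigma^+:=\sigma^+_{j+\frac12}$ and $\hat\sigma^-:=\sigma^-_{j+\frac12}$, together with the reference speeds $\sigma^\pm:=\sigma^\pm(\bU^-,\bU^+)$. The two sign conditions in the statement of the theorem, namely $\sigma^+_{j+\frac12}\ge\sigma^+(\bU^-,\bU^+)$ and $\sigma^-_{j+\frac12}\le\sigma^-(\bU^-,\bU^+)$, are exactly the hypotheses required by \Cref{lem22}. Applying it gives
\begin{equation*}
\bU^*_{j+\frac12}=R_f\big(\sigma^+_{j+\frac12},\sigma^-_{j+\frac12},\bU^+_{j+\frac12},\bU^-_{j+\frac12}\big)\in{\G},
\end{equation*}
which is precisely BP Condition \#2 at the interface $x_{j+\frac12}$. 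Since $j$ was arbitrary, the conclusion follows for all $j$.

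Since both of the underlying lemmas are already established, there is essentially no obstacle left in the proof itself; the only thing to verify carefully is that the sign conventions align correctly between the statement of \Cref{lem22} (where $\hat\sigma^-\le\sigma^-$ and $\hat\sigma^+\ge\sigma^+$) and the inequalities assumed here, so that the convex-combination representation used inside \Cref{lem22} really applies with nonnegative weights. I would therefore devote one sentence to pointing out that $\hat\sigma^+-\sigma^+\ge 0$ and $\sigma^--\hat\sigma^-\ge 0$ hold by hypothesis, before concluding. The proof is thus essentially a two-line composition of the preceding lemmas.
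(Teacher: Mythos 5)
Your proposal is correct and follows essentially the same route as the paper: the assumed property \eqref{17} of the given functions $\sigma^\pm(\cdot,\cdot)$ supplies the seed state $R_f(\sigma^+(\bU^-_{j+\frac12},\bU^+_{j+\frac12}),\sigma^-(\bU^-_{j+\frac12},\bU^+_{j+\frac12}),\bU^+_{j+\frac12},\bU^-_{j+\frac12})\in{\G}$, and \Cref{lem22} then transfers membership in ${\G}$ to $\bU^*_{j+\frac12}$ under the stated inequalities on $\sigma^\pm_{j+\frac12}$. The only cosmetic slip is attributing the seed state to \Cref{lem23} rather than directly to the hypothesis \eqref{17} (the lemma only establishes that such functions exist, while the theorem already assumes them given), which does not affect the argument.
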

\begin{proof}
	The condition \eqref{17} implies
	$R_f(\sigma^+(\bU^-_{j+\frac12},\bU^+_{j+\frac12}),\sigma^-(\bU^-_{j+\frac12},\bU^+_{j+\frac12}),\bU^+_{j+\frac12},\bU^-_{j+\frac12})\in
	{\G}$, which by \Cref{lem22} yields
	$\bU^*_{j+\frac12}=R_f(\sigma^+_{j+\frac12},\sigma^-_{j+\frac12},\bU^+_{j+\frac12},\bU^{-}_{j+\frac12})\in{\G}$. Hence, the BP
	Condition \#2 is satisfied.
\end{proof}

Based on the above analysis, the task of enforcing BP Condition \#2 reduces to finding suitable functions $\sigma^-(\bU^-,\bU^+)$ and
$\sigma^+(\bU^-,\bU^+)$ satisfying \eqref{17}. This may be not easy as it is often very difficult to exactly evaluate the minimum and
maximum speeds of the Riemann solution, $\sigma_{\min}(\bU^-,\bU^+)$ and $\sigma_{\max}(\bU^-,\bU^+)$. Alternatively, one may estimate a
lower bound $\hat\sigma^-(\bU^-,\bU^+)\le\min\{\sigma_{\min}(\bU^-,\bU^+),0\}$ and an upper bound
$\hat\sigma^+(\bU^-,\bU^+)\ge\max\{\sigma_{\max}(\bU^-,\bU^+),0\}$. Based on \Cref{lem22}, such functions $\hat\sigma^-(\bU^-,\bU^+)$ and
$\hat\sigma^+(\bU^-,\bU^+)$ also satisfy \eqref{17}. 

\subsubsection{BP Condition \#3: \texorpdfstring{$\bU^*_j\in{\G}$}{$u^*_j\in{\G}$}}\label{sec223}
\begin{theorem}\label{thm25}
	Given $\bU^\pm_{j+\frac12}\in{\G}$ for all $j$ and given functions $\sigma^+(\cdot,\cdot)$ and $\sigma^-(\cdot,\cdot)$ satisfying
	\eqref{17}, the BP Condition \#3 is satisfied if
	$$
	\sigma_{j+\frac12}^+\ge\sigma^+(\bU^+_{j-\frac12},\bU^-_{j+\frac12})\quad\mbox{and}\quad
	\sigma_{j-\frac12}^-\le\sigma^-(\bU^+_{j-\frac12},\bU^-_{j+\frac12})\quad\forall j.
	$$
\end{theorem}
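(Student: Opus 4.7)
The plan is to mimic the structure of the proof of \Cref{thm24} but applied to the pair of one-sided states $\bU^+_{j-\frac12}$ and $\bU^-_{j+\frac12}$ that bracket the cell $\Omega_j$, rather than the pair that brackets an interface. The key observation is that, by definition \eqref{9}, the intermediate state $\bU^*_j=R_f(\sigma^+_{j+\frac12},\sigma^-_{j-\frac12},\bU^-_{j+\frac12},\bU^+_{j-\frac12})$ has exactly the same algebraic form as $\bU^*_{j+\frac12}$ in \Cref{thm24}; only the identities of the two input states and the two speeds are different. Thus the same two-step argument (existence of admissible speeds via \Cref{lem23}, followed by a speed-widening via \Cref{lem22}) should apply verbatim.

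First I would invoke \Cref{lem23} with the ordered pair $(\bU^+_{j-\frac12},\bU^-_{j+\frac12})$ playing the role of the left/right Riemann data $(\bU^-,\bU^+)$ in the statement of that lemma. Since both states lie in $\G$ by hypothesis, this yields speeds $\sigma^\pm(\bU^+_{j-\frac12},\bU^-_{j+\frac12})$ for which
\[
R_f\bigl(\sigma^+(\bU^+_{j-\frac12},\bU^-_{j+\frac12}),\,\sigma^-(\bU^+_{j-\frac12},\bU^-_{j+\frac12}),\,\bU^-_{j+\frac12},\,\bU^+_{j-\frac12}\bigr)\in{\G}.
\]

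Second, I would apply \Cref{lem22} with $\bU^-=\bU^+_{j-\frac12}$, $\bU^+=\bU^-_{j+\frac12}$, $\sigma^\pm=\sigma^\pm(\bU^+_{j-\frac12},\bU^-_{j+\frac12})$, and $\hat\sigma^+=\sigma^+_{j+\frac12}$, $\hat\sigma^-=\sigma^-_{j-\frac12}$. The hypotheses of the theorem give exactly $\hat\sigma^+\ge\sigma^+$ and $\hat\sigma^-\le\sigma^-$, so the lemma yields $R_f(\sigma^+_{j+\frac12},\sigma^-_{j-\frac12},\bU^-_{j+\frac12},\bU^+_{j-\frac12})\in{\G}$, which is precisely $\bU^*_j\in{\G}$.

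There is essentially no obstacle here beyond bookkeeping: the only thing that requires care is matching the roles of the ``$\pm$'' arguments, since $\bU^+_{j-\frac12}$ sits on the \emph{left} of the cell and therefore plays the role of the minus-state in $R_f$, while $\bU^-_{j+\frac12}$ sits on the \emph{right} and plays the role of the plus-state. Once this identification is made explicit, the proof reduces to the two one-line applications of \Cref{lem23,lem22} described above, in direct parallel to the proof of \Cref{thm24}.
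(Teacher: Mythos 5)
Your proposal is correct and follows essentially the same route as the paper's own proof: apply condition \eqref{17} to the pair $(\bU^+_{j-\frac12},\bU^-_{j+\frac12})$ and then widen the speeds via \Cref{lem22} to conclude $\bU^*_j\in{\G}$. The only cosmetic difference is that you invoke \Cref{lem23} to produce the admissible speeds, whereas the theorem already assumes the functions $\sigma^\pm(\cdot,\cdot)$ satisfying \eqref{17} are given, so that step is superfluous but harmless.
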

\begin{proof}
	The condition \eqref{17} implies
	$R_f(\sigma^+(\bU^+_{j-\frac12},\bU^-_{j+\frac12}),\sigma^-(\bU^+_{j-\frac12},\bU^-_{j+\frac12}),\bU^-_{j+\frac12},\bU^+_{j-\frac12})\in
	{\G}$, which by
	\Cref{lem22} yields 
	$$
	\bU^*_j =R_f(\sigma^+_{j+\frac12},\sigma^-_{j-\frac12},\bU^-_{j+\frac12},\bU^+_{j-\frac12})\in{\G}.
	$$ 
	Hence, the BP
	Condition \#3 is satisfied.
\end{proof}

According to \Cref{thm24,thm25}, both the BP Conditions \#2 and \#3 can be enforced under certain conditions on the one-sided local speeds 
of propagation $\{\sigma_{j+\frac12}^\pm\}$ as stated in the following theorem.
\begin{theorem}\label{thm26}
	Given $\bU^\pm_{j+\frac12}\in{\G}$ for all $j$ and given functions $\sigma^+(\cdot,\cdot)$ and $\sigma^-(\cdot,\cdot)$ satisfying
	\eqref{17}, both the BP Conditions \#2 and \#3 are satisfied if
	$$
	\begin{aligned}
		&\sigma^+_{j+\frac12}=\max\Big\{\sigma^+(\bU^-_{j+\frac12},\bU^+_{j+\frac12}),\sigma^+(\bU^+_{j-\frac12},\bU^-_{j+\frac12})\Big\},\\
		&\sigma^-_{j+\frac12}=\min\Big\{\sigma^-(\bU^-_{j+\frac12},\bU^+_{j+\frac12}),\sigma^-(\bU^+_{j+\frac12},\bU^-_{j+\frac32})\Big\},
	\end{aligned}\quad\forall j.
	$$
\end{theorem}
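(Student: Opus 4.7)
The plan is to deduce Theorem 2.6 directly from the previously established Theorems 2.4 and 2.5, by verifying that the prescribed choice of the one-sided local speeds $\sigma^\pm_{j+\frac12}$ simultaneously meets both sets of hypotheses. Since the statement combines two conditions that were handled separately, the essential work is bookkeeping with indices; no new estimates on the Riemann fan or on the functional $R_f$ are required.

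First I would recall the four inequalities that Theorems 2.4 and 2.5 demand on the local speeds. For Theorem 2.4 (enforcing BP Condition \#2 at the interface $x_{j+\frac12}$) we need
$$
\sigma^+_{j+\frac12}\ge\sigma^+(\bU^-_{j+\frac12},\bU^+_{j+\frac12}),\qquad
\sigma^-_{j+\frac12}\le\sigma^-(\bU^-_{j+\frac12},\bU^+_{j+\frac12}).
$$
For Theorem 2.5 (enforcing BP Condition \#3 in cell $\Omega_j$) we need
$$
\sigma^+_{j+\frac12}\ge\sigma^+(\bU^+_{j-\frac12},\bU^-_{j+\frac12}),\qquad
\sigma^-_{j-\frac12}\le\sigma^-(\bU^+_{j-\frac12},\bU^-_{j+\frac12}).
$$
Shifting the index of the latter inequality via $j\mapsto j+1$ rewrites the constraint on $\sigma^-_{j-\frac12}$ as a constraint on $\sigma^-_{j+\frac12}$, namely $\sigma^-_{j+\frac12}\le\sigma^-(\bU^+_{j+\frac12},\bU^-_{j+\frac32})$. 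Collecting the two lower bounds on $\sigma^+_{j+\frac12}$ and the two upper bounds on $\sigma^-_{j+\frac12}$ yields precisely the max/min prescriptions in the statement of the theorem.

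With the bookkeeping done, the conclusion follows: the hypotheses of Theorem 2.4 hold at every interface, so $\bU^*_{j+\frac12}\in\mathcal{G}$ for all $j$, i.e.\ BP Condition \#2 is satisfied; and the hypotheses of Theorem 2.5 hold in every cell, so $\bU^*_j\in\mathcal{G}$ for all $j$, i.e.\ BP Condition \#3 is satisfied.

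There is no real obstacle here, since all the substantive monotonicity-in-speed work has already been absorbed into Lemma 2.2 and its consequences. The only point that warrants a brief comment in the write-up is the index shift, which is needed because the lower half-interface of cell $\Omega_j$ in Theorem 2.5 becomes the upper half-interface of cell $\Omega_{j-1}$ when rephrased uniformly in terms of $\sigma^-_{j+\frac12}$; one should also note that the prescription is well-defined for every $j$ in the computational domain, with obvious modifications near physical boundaries where ghost-cell values supply the missing arguments $\bU^-_{j+\frac32}$ and $\bU^+_{j-\frac12}$.
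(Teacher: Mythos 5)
Your proof is correct and follows exactly the route the paper intends: the paper omits the proof of \Cref{thm26}, stating only that it is a direct consequence of \Cref{thm24,thm25}, and your argument supplies precisely that deduction, with the index shift $j\mapsto j+1$ correctly converting the constraint on $\sigma^-_{j-\frac12}$ from \Cref{thm25} into the second entry of the $\min$ defining $\sigma^-_{j+\frac12}$.
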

\Cref{thm26} is a direct consequence of \Cref{thm24,thm25}, and its proof is thus omitted.

\subsubsection{BP Condition \#4: \texorpdfstring{$\bU^{*,\pm}_{j+\frac12}\in{\G}$}{$u^{*,\pm}_{j+\frac12}\in{\G}$}}\label{sec224}
Let us consider the following linear function:
\begin{equation}
	\bm\psi(\xi)=\bU^*_{j+\frac12}+\frac{\bd_{j+\frac12}}{\sigma^+_{j+\frac12}-\sigma^-_{j+\frac12}}\,\xi, 
	\label{18}
\end{equation}
on the interval $\xi\in\big[-\sigma^+_{j+\frac12},-\sigma^-_{j+\frac12}\big]$. Notice that it follows from \eqref{9} that
$\bm\psi(-\sigma^\pm_{j+\frac12})=\bU^{*,\pm}_{j+\frac12}$ and thus, in order to enforce the BP Condition \#4 one may use a limited
modification of \eqref{18},
\begin{equation*}
	\widetilde{\bm\psi}(\xi)=\bU^*_{j+\frac12}+\Theta\big(\bU^*_{j+\frac12},\bU^{*,-}_{j+\frac12},\bU^{*,+}_{j+\frac12}\big)
	\frac{\bd_{j+\frac12}}{\sigma^+_{j+\frac12}-\sigma^-_{j+\frac12}}\,\xi, 
\end{equation*}
with one of the functions $\Theta$ introduced in \S\ref{sec224} in either \eqref{15} or \eqref{16}. It is easy to verify that
$\tilde\bU^{*,\pm}_{j+\frac12}:=\widetilde{\bm\psi}(-\sigma^\pm_{j+\frac12})\in{\G}$, so one has to replace the values
$\bU^{*,\pm}_{j+\frac12}$ with their limited versions $\tilde\bU^{*,\pm}_{j+\frac12}$.

\subsubsection{Constructing BPCU Schemes}\label{sec225}
The BPCU schemes for \eqref{1f} can be constructed via the following steps.

\noindent
{\bf Step 1.} Check whether the original CU scheme satisfies the BP Condition \#1. If not, modify the values $\bU_{j+\frac12}^\pm$ as
described in \S\ref{sec221}.

\noindent
{\bf Step 2.} Check whether the modified (after Step 1) CU scheme satisfies the BP Conditions \#2 and \#3. If not, modify the one-sided
local speeds of propagation $\sigma_{j+\frac12}^\pm$ as described in \S\ref{sec222} and \S\ref{sec223}.

\noindent
{\bf Step 3.} Check whether the modified (after Steps 1 and 2) CU scheme satisfies the BP Condition \#4. If not, modify the ``built-in''
anti-diffusion term $\bd_{j+\frac12}$ as described in \S\ref{sec224}.

\smallskip
We summarize the procedure of constructing BPCU schemes in \Cref{fig2}. 
Following these steps, one can construct BPCU schemes for a variety of hyperbolic systems of conservation laws. In \S\ref{sec23}, we
will construct provably BPCU schemes for the Euler equations of gas dynamics.
\begin{figure}[ht!]
	\centering
	\begin{tikzpicture}[scale = 0.8]
		% arrow
		\draw [-latex,thick] (3.1,-4) -- (3.1,-5) -- (6.03,-5);
		\draw [-latex,thick] (3.0,-4) -- (3.0,-6.1) -- (7.50,-6.1);
		\draw [-latex,thick] (2.9,-4) -- (2.9,-7.1) -- (10.5,-7.1);
		\draw [thick] (3.1,-4) -- (6.55,-4);
		\draw [thick] (6.65,-4) -- (11.9,-4) -- (11.9,-3);
		\draw [thick] (12.1,-7) -- (12.1,-3);
		\draw [thick] (0   ,-0.9) -- (13.0,-0.9);
		\draw [thick] (12  ,-2) -- (12  ,-1.1) -- (13.0,-1.1);
		% nodes
		\node [process, fill=blue!20] at (0   ,-1) (un_j) {$\xbar\bU_j^{\,n}$};
		\node [process, fill=blue!20] at (14.5,-1) (unp1_j) {$\xbar\bU_j^{\,n+1}$};
		\node [process, fill=blue!20] at (6.6 ,-2) (dt) {$\dt$};
		\node [process, fill=blue!20] at (12  ,-2) (f_jph) {$\hat\bF_{j+\frac12}$};
		\node [process, fill=blue!20] at (3   ,-4) (u_jph) {$\bU_{j+\frac12}^\pm$};
		\node [process, fill=blue!20] at (6.6 ,-5) (sigma_jph) {$\sigma_{j+\frac12}^\pm$};
		\node [process, fill=blue!20] at (9.0 ,-6) (us_jph) {$\bU_{j+\frac12}^*$};
		\node [process, fill=blue!20] at (12  ,-7) (d_jph) {$\bd_{j+\frac12}$};
		% connectors
		\path [connector] (un_j) |- (u_jph);
		\path [connector] (sigma_jph) -- (dt);
		\path [connector] (dt) |- (unp1_j);
		\path [connector] (sigma_jph) |- (us_jph);
		\path [connector] (sigma_jph) -| (f_jph);
		\path [connector] (us_jph) |- (d_jph);
		% label on connectors
		\node[draw,fill=yellow!40,rounded corners] at (1.15, -4) (L1) {$\bU_{j+\frac12}^\pm$};
		\node[draw,fill=yellow!40,rounded corners] at (4.70, -5) (L2) {$\sigma_{j+\frac12}^\pm$};
		\node[draw,fill=yellow!40,rounded corners] at (6.6, -3.3) (L3) {CFL condition};
		\node[draw,fill=yellow!40,rounded corners] at (10.25, -7) (L6) {$\bd_{j+\frac12}$};
		\node[draw,fill=gray!10,rounded corners] at (7.40, -6) (L4) {\eqref{9}};
		\node[draw,fill=gray!10,rounded corners] at (12, -3.25) (L5) {\eqref{3}};
		\node[draw,fill=gray!10,rounded corners] at (12.85, -1) (L7) {\eqref{2}};
	\end{tikzpicture}
	\caption{\sf Flow chart of BPCU schemes from $\,\xbar\bU^{\,n}_j$ to $\,\xbar\bU^{\,n+1}_j$. To guarantee the BP property, the four formulae
		(blocks in yellow) may require careful redesign/verification and may differ from the counterparts of the original CU schemes.\label{fig2}}
\end{figure}

\begin{rmk}\label{rem24}
	The BP analysis presented in \S\ref{sec2} is based on the first-order forward Euler time discretization. The above results can be directly
	extended to high-order SSP Runge-Kutta or multistep method (see, e.g., \cite{GottliebKetchesonShu2011,GST}), which can be expressed as a
	convex combination of forward Euler steps. The BP property of the resulting BPCU schemes remains valid thanks to the convexity of
	${\G}$.
\end{rmk}

\subsection{BPCU schemes for 1-D Euler Equations of Gas Dynamics}\label{sec23}
The 1-D Euler equations of gas dynamics for ideal gases read as \eqref{1f} with
\begin{equation}
	\bU=\big(\rho,\rho v,E\big)^\top,\quad\bF(\bU)=\big(\rho v,\rho v^2+p,(E+p)v\big)^\top,\quad E=\frac12\rho v^2+\frac{p}{\gamma-1}.
	\label{20}
\end{equation}
Here, $\rho$ is the density, $v$ is the velocity, $E$ is the total energy, $p$ is the pressure, and the specific heat ratio $\gamma>1$ is a
constant. The speed of sound is given by $c=\sqrt{\gamma p/\rho}$, and the three eigenvalues of the Jacobian $\partial\bF/\partial\bU$ are 
$\lambda_1=v-c$, $\lambda_2=v$, and $\lambda_3=v+c$. A convex invariant region for the 1-D Euler equations of gas dynamics is
\begin{equation}
	{\G}=\Big\{\,\bU\in\mathbb R^3: \rho>0,~p(\bU)=(\gamma-1)\Big(E-\frac12\rho v^2\Big)>0\,\Big\}.
	\label{22}
\end{equation}

For the 1-D Euler equations of gas dynamics, the original CU schemes reviewed in \S\ref{sec21} are generally not BP. In this subsection, we
will use the BP framework proposed in \S\ref{sec22} to modify the original CU schemes such that the BP Conditions \#1--4 are satisfied,
resulting in provably BPCU schemes.

\subsubsection{Enforcing the BP Condition \#1}\label{sec231}
We first emphasize that for the Euler equations of gas dynamics, the minmod reconstruction \eqref{4}--\eqref{5} does not, in general,
satisfy the BP Condition \#1. More specifically, while the positivity of density is guaranteed, the reconsructed point values of pressure
may be negative. In order to address this issue, we adopt the local scaling BP limiter \cite{ZhangShu2010_PP,wang2012robust} to enforce the
positivity of pressure: we replace the point values $\bU^-_{j+\frac12}$ and $\bU^+_{j-\frac12}$ defined in \eqref{4} with
\begin{equation}
	\begin{aligned}
		&\tilde\bU^-_{j+\frac12}=\,\xbar\bU_j^{\,n}+\frac{\dx}{2}\delta_j\,(\bU_x)_j,
		\tilde\bU^+_{j-\frac12}=\,\xbar\bU_j^{\,n}-\frac{\dx}{2}\delta_j\,(\bU_x)_j,~
		\delta_j:=\min\bigg\{\frac{p(\,\xbar\bU_j^{\,n})-\hat\varepsilon_j}{p(\,\xbar{\bU}_j^{\,n})-p_j^{\min}},1\bigg\},\\
		&\hat\varepsilon_j:=\min\big\{10^{-13},p(\,\xbar\bU_j^{\,n})\big\},\quad p_j^{\min}:=\min\big\{p\big(\,\bU^-_{j+\frac12}\big),
		p\big(\,\bU^+_{j-\frac12}\big)\big\}.
	\end{aligned}
	\label{23}
\end{equation}

\begin{lemma}\label{lem27}
	For the 1-D Euler equations of gas dynamics \eqref{1f}, \eqref{20} with the invariant region \eqref{22}, the new limited values
	$\tilde\bU^\pm_{j+\frac12}$ defined in \eqref{23} satisfy the BP Condition \#1 provided $\,\xbar\bU^{\,n}_j\in{\G}$ for all $j$.
\end{lemma}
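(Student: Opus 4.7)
The plan is to verify separately the two constraints that define $\G$ for the Euler system \eqref{20}, namely $\rho>0$ and $p(\bU)>0$, for each of the limited values $\tilde\bU^\mp_{j\pm\frac12}$. The key structural observation is that the limiter in \eqref{23} produces the convex combinations
\[
\tilde\bU^-_{j+\frac12} \;=\; (1-\delta_j)\,\xbar\bU_j^{\,n} + \delta_j\,\bU^-_{j+\frac12},\qquad
\tilde\bU^+_{j-\frac12} \;=\; (1-\delta_j)\,\xbar\bU_j^{\,n} + \delta_j\,\bU^+_{j-\frac12},
\]
where $\delta_j\in[0,1]$ and $\bU^\mp_{j\pm\frac12}$ are the unlimited reconstructions \eqref{4}--\eqref{5}.

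For the density, I would simply observe that $\rho$ is a linear component of $\bU$. By assumption $\rho(\,\xbar\bU_j^{\,n})>0$, and the generalized minmod limiter with $\theta\in[1,2]$ preserves positivity of the density in the unlimited reconstruction (a fact recalled at the beginning of \S\ref{sec231}), so $\rho(\bU^\mp_{j\pm\frac12})>0$. Positivity of $\rho(\tilde\bU^\mp_{j\pm\frac12})$ then follows immediately from the convex combination.

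For the pressure the crucial analytical input is that $p(\bU)=(\gamma-1)(E-(\rho v)^2/(2\rho))$ is a concave function of $\bU$ on the open half-space $\{\rho>0\}$; this can be confirmed by a direct computation showing that the Hessian of $-(\rho v)^2/(2\rho)$ is negative semidefinite there. Concavity applied to the convex combination above yields
\[
p(\tilde\bU^\mp_{j\pm\frac12})\;\ge\;(1-\delta_j)\,p(\,\xbar\bU_j^{\,n}) + \delta_j\,p(\bU^\mp_{j\pm\frac12}) \;\ge\;(1-\delta_j)\,p(\,\xbar\bU_j^{\,n}) + \delta_j\,p_j^{\min}.
\]
Plugging in the definition of $\delta_j$ from \eqref{23} then finishes the argument: if the limiter is inactive, i.e.\ $\delta_j=1$, the right-hand side is already $p_j^{\min}\ge\hat\varepsilon_j$; otherwise $\delta_j=(p(\,\xbar\bU_j^{\,n})-\hat\varepsilon_j)/(p(\,\xbar\bU_j^{\,n})-p_j^{\min})\in(0,1)$, the denominator is strictly positive because $p_j^{\min}<\hat\varepsilon_j\le p(\,\xbar\bU_j^{\,n})$, and a one-line algebraic rearrangement collapses the lower bound to exactly $\hat\varepsilon_j>0$.

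The only nontrivial ingredient is the concavity of $p$ on $\{\rho>0\}$; without it the convex-combination structure alone conveys no information about the nonlinear pressure constraint, and the carefully tuned scaling factor $\delta_j$ in \eqref{23} would not translate into a usable bound on $p(\tilde\bU^\mp_{j\pm\frac12})$. Everything else is either direct algebra or a reuse of the density positivity of the minmod reconstruction already mentioned in \S\ref{sec231}.
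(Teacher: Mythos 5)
Your proof is correct and follows essentially the same route as the paper's: density positivity from the linearity of $\rho$, the minmod reconstruction, and the convex combination with $\delta_j\in[0,1]$; pressure positivity from the concavity of $p(\bU)$ on $\{\rho>0\}$ via Jensen's inequality, the same case split on whether $\delta_j=1$, and the same algebraic collapse of the lower bound to $\hat\varepsilon_j>0$. No substantive difference from the paper's argument.
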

\begin{proof}
	First, we note that since $\,\xbar\rho^{\,n}_j>0$ for all $j$, the use of the minmod limiter ensures the positivity of
	$\rho^\pm_{j+\frac12}$ for all $j$, and thus $\tilde\rho^\pm_{j+\frac12}>0$ as $0\le\delta_j\le1$ for all $j$. Hence, we only need to prove
	that $p\big(\tilde\bU^\mp_{j\pm\frac12}\big)>0$ for all $j$.
	
	We consider two possible cases. If $p_j^{\min}\ge\hat\varepsilon_j$, \eqref{23} yields $\delta_j=1$ and thus
	$p\big(\tilde\bU^\mp_{j\pm\frac12}\big)=p\big(\bU^\mp_{j\pm\frac12}\big)\ge p_j^{\min}\ge\hat\varepsilon_j>0$. Otherwise, we apply Jensen's
	inequality to the pressure function $p(\bU)$, which is concave when $\rho>0$, and obtain
	$$
	\begin{aligned}
		p\big(\tilde\bU^\mp_{j\pm\frac12}\big)&=p\big(\,\xbar\bU_j^{\,n}+\delta_j\big(\bU^\mp_{j\pm\frac12}-\,\xbar\bU_j^{\,n}\big)\big)=
		p\big((1-\delta_j)\,\xbar\bU_j^{\,n}+\delta_j\,\bU^\mp_{j\pm\frac12}\big)\\
		&\ge(1-\delta_j)p(\,\xbar\bU_j^{\,n})+\delta_jp\big(\bU^\mp_{j\pm\frac12}\big)\\
		&\stackrel{\eqref{23}}{=}\bigg(1-\frac{p(\,\xbar\bU_j^{\,n})-\hat\varepsilon_j}{p(\,\xbar{\bU}_j^{\,n})-p_j^{\min}}\bigg)
		p(\,\xbar\bU_j^{\,n})+
		\frac{p(\,\xbar\bU_j^{\,n})-\hat\varepsilon_j}{p(\,\xbar{\bU}_j^{\,n})-p_j^{\min}}\,p\big(\bU^\mp_{j\pm\frac12}\big)\\
		&\ge\frac{\hat\varepsilon_j-p_j^{\min}}{p(\,\xbar{\bU}_j^{\,n})-p_j^{\min}}\,p(\,\xbar\bU_j^{\,n})+
		\frac{p(\,\xbar\bU_j^{\,n})-\hat\varepsilon_j}{p(\,\xbar{\bU}_j^{\,n})-p_j^{\min}}\,p_j^{\min}=\hat\varepsilon_j>0.
	\end{aligned}
	$$
	The proof is completed.
\end{proof}

\subsubsection{The BP Conditions \#2 and \#3}\label{sec232}
Equipped with the reconstructed point values
$\bU^\pm_{j+\frac12}=\big(\rho^\pm_{j+\frac12},(\rho v)^\pm_{j+\frac12},E^\pm_{j+\frac12}\big)^\top$, we compute the one-sided local speeds
of propagation using equation \eqref{6}, which for the Euler equations of gas dynamics reads as
\begin{equation}
	\sigma^-_{j+\frac12}=\min\big\{v^-_{j+\frac12}-c^-_{j+\frac12},v^+_{j+\frac12}-c^+_{j+\frac12},0\big\},~~
	\sigma^+_{j+\frac12}=\max\big\{v^-_{j+\frac12}+c^-_{j+\frac12},v^+_{j+\frac12}+c^+_{j+\frac12},0\big\},
	\label{25}
\end{equation}
where $v^\pm_{j+\frac12}=(\rho v)^\pm_{j+\frac12}/\rho^\pm_{j+\frac12}$,
$c^\pm_{j+\frac12}=\sqrt{\gamma p^\pm_{j+\frac12}/\rho^\pm_{j+\frac12}}$, and $p^\pm_{j+\frac12}=p\big(\bU^\pm_{j+\frac12}\big)$. In fact,
$\sigma^\pm_{j+\frac12}$ defined in \eqref{25} and corrected according to Remark \ref{rem21}, satisfy both the BP Conditions \#2 and \#3. In
order to prove this, we adopt the GQL approach \cite{Wu2021GQL}.

First, recall that according to \cite[Theorem 5.1]{Wu2021GQL}, the equivalent GQL representation of the invariant region \eqref{22} is given
by
$$
{\G}_*:=\left\{\,\bU\in\mathbb R^3: \bU\cdot\bm e_1>0,\,\bU_1\cdot\bm n_*>0~\forall v_*\in\mathbb R\,\right\},
$$
where $\bm e_1=(1,0,0)^\top$ and $\bm n_*=\big(\frac{v_*^2}{2},-v_*,1\big)^\top$.

We then prove the following two auxiliary lemmas.
\begin{lemma}\label{lem28}
	For any $\bU\in{\G}$, we have $\left[c\bU\pm\big(v\bU-\bm f(\bU)\big)\right]\cdot\bm n>0$, where $\bm n=\bm e_1$ or $\bm n=\bm n_*$ for
	any $v_*\in\mathbb R$.
\end{lemma}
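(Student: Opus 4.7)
My plan is to verify the inequality by direct computation, reducing it to a quadratic positivity check that follows from the definition of the sound speed.

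First I would compute $v\bU-\bF(\bU)$ componentwise using the definitions in \eqref{20}. The density flux cancels against $v\rho$, giving zero in the first slot, $-p$ in the second slot, and $-pv$ in the third slot, so $v\bU-\bF(\bU)=(0,-p,-pv)^\top$. Consequently, $c\bU\pm(v\bU-\bF(\bU))=(c\rho,\,c\rho v\mp p,\,cE\mp pv)^\top$.

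For the case $\bm n=\bm e_1$, the inner product collapses to $c\rho$, which is strictly positive because $\bU\in\G$ forces $\rho>0$ and $p>0$ (hence $c>0$). For $\bm n=\bm n_*$, I would expand the dot product and then use $E=\tfrac12\rho v^2+\tfrac{p}{\gamma-1}$ to recognize the telltale completion of the square: the $c$-terms regroup as
\[
c\left(\frac{\rho v_*^2}{2}-\rho v v_*+E\right)=c\left(\frac{\rho(v_*-v)^2}{2}+\frac{p}{\gamma-1}\right),
\]
which leaves
\[
\bigl[c\bU\pm(v\bU-\bF(\bU))\bigr]\cdot\bm n_*=c\left(\frac{\rho(v_*-v)^2}{2}+\frac{p}{\gamma-1}\right)\pm p(v_*-v).
\]

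It therefore suffices to prove the pointwise inequality $\tfrac12 c\rho\,y^2-p|y|+\tfrac{cp}{\gamma-1}>0$ with $y:=v_*-v$. Treating the left-hand side as a quadratic in $|y|$ with positive leading coefficient, I would compute its discriminant $p^2-\tfrac{2c^2\rho p}{\gamma-1}$ and invoke $c^2\rho=\gamma p$ to simplify it to $-p^2(\gamma+1)/(\gamma-1)$, which is strictly negative for $\gamma>1$. Hence the quadratic has no real roots and is everywhere positive, yielding the claim for both signs.

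The only place requiring care is the algebraic identity $\tfrac{\rho v_*^2}{2}-\rho v v_*+E=\tfrac{\rho(v_*-v)^2}{2}+\tfrac{p}{\gamma-1}$; everything else is a mechanical assembly. I do not anticipate a genuine obstacle, since the entire lemma is built to expose the relation $c^2\rho=\gamma p$ as the mechanism behind the inequality.
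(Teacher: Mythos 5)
Your proof is correct and follows essentially the same route as the paper's: the same computation $v\bU-\bF(\bU)=(0,-p,-pv)^\top$, the same completion of the square $\bU\cdot\bm n_*=\frac{\rho}{2}(v-v_*)^2+\frac{p}{\gamma-1}$, and the same reduction to the inequality $c\big(\frac{\rho}{2}y^2+\frac{p}{\gamma-1}\big)>p\,|y|$ with $y=v_*-v$. The only cosmetic difference is in closing that last inequality: the paper uses an AM--GM estimate, while you check that the quadratic in $|y|$ has positive leading coefficient and negative discriminant; both hinge on the identity $c^2\rho=\gamma p$.
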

\begin{proof}
	If $\bm n=\bm e_1$, then $\left[c\bU\pm\big(v\bU-\bm f(\bU)\big)\right]\cdot\bm e_1=c\rho>0$. If $\bm n=\bm n_*$, then
	$$
	c\,\bU\cdot\bm n_*=\sqrt{\frac{\gamma p}{\rho}}\left(\frac{\rho}{2}(v-v_*)^2+\frac{p}{\gamma-1}\right)>
	\sqrt{\frac{(\gamma-1)p}{2\rho}}\sqrt{2\rho(v-v_*)^2\frac{p}{\gamma-1}}=p\,|v-v_*|,
	$$
	which implies
	$$
	\left[c\bU\pm\big(v\bU-\bm f(\bU)\big)\right]\cdot\bm n_*>p\,|v-v_*|\mp(0,p,pv)^\top\cdot\bm n_*=p\,|v-v_*|\mp p(v-v_*)\ge0~~\forall
	v_*\in\mathbb R.
	$$
	The proof is completed.
\end{proof}
\begin{lemma}\label{lem29}
	For any $\bU^+,\bU^-\in{\G}$, if $\sigma^+\ge v^++c^+$, $\sigma^-\le v^--c^-$, and $\sigma^+>\sigma^-$, then
	$R_f(\sigma^+,\sigma^-,\bU^+,\bU^-)\in{\G}$.
\end{lemma}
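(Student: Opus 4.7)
The plan is to verify the membership $R_f(\sigma^+,\sigma^-,\bU^+,\bU^-)\in{\G}$ via the GQL reformulation ${\G}={\G}_*$ already recalled before Lemma 2.8, namely by showing that $R_f\cdot\bm n>0$ for $\bm n=\bm e_1$ and for every $\bm n_*=(v_*^2/2,-v_*,1)^\top$ with $v_*\in\mathbb R$. Since $\sigma^+-\sigma^->0$ by assumption, it is equivalent to show
\begin{equation*}
\bigl[\sigma^+\bU^+-\bF(\bU^+)\bigr]\cdot\bm n+\bigl[\bF(\bU^-)-\sigma^-\bU^-\bigr]\cdot\bm n>0.
\end{equation*}
Thus I would split the numerator of $R_f$ into a ``$+$ part'' depending only on $\bU^+,\sigma^+$ and a ``$-$ part'' depending only on $\bU^-,\sigma^-$, and prove each term is strictly positive (for the first) or nonnegative (for the second).

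For the first term I would introduce the nonnegative slack $\epsilon^+:=\sigma^+-(v^++c^+)\ge0$ and rewrite
\begin{equation*}
\sigma^+\bU^+-\bF(\bU^+)=\bigl[c^+\bU^++(v^+\bU^+-\bF(\bU^+))\bigr]+\epsilon^+\bU^+.
\end{equation*}
Lemma 2.8 (with the $+$ sign applied to $\bU^+\in{\G}$) gives the strict positivity of the bracketed quantity paired with $\bm n$, while $\bU^+\cdot\bm n>0$ by $\bU^+\in{\G}={\G}_*$ and $\epsilon^+\ge0$, so the whole expression has strictly positive inner product with $\bm n$. For the second term I would set $\epsilon^-:=(v^--c^-)-\sigma^-\ge0$ and rewrite
\begin{equation*}
\bF(\bU^-)-\sigma^-\bU^-=\bigl[c^-\bU^--(v^-\bU^--\bF(\bU^-))\bigr]+\epsilon^-\bU^-,
\end{equation*}
and now Lemma 2.8 (with the $-$ sign applied to $\bU^-\in{\G}$) gives strict positivity of $[c^-\bU^--(v^-\bU^--\bF(\bU^-))]\cdot\bm n$, while $\epsilon^-\bU^-\cdot\bm n\ge0$.

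Adding the two inequalities yields $R_f\cdot\bm n>0$ for every admissible $\bm n$, hence $R_f(\sigma^+,\sigma^-,\bU^+,\bU^-)\in{\G}_*={\G}$.

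There is no real obstacle here: the proof is essentially algebraic bookkeeping once Lemma 2.8 is in hand. The only subtle point is choosing the decomposition so that the terms whose positivity is guaranteed by Lemma 2.8 are isolated from the ``excess'' contributions $\epsilon^{\pm}\bU^{\pm}$, whose nonnegativity uses the hypotheses $\sigma^+\ge v^++c^+$ and $\sigma^-\le v^--c^-$ exactly once each. This mirrors the role played by the HLL averaging argument in Lemma 2.3, but now carried out directly through the GQL linear functionals, which is why the conclusion lands in ${\G}$ rather than merely in its closure.
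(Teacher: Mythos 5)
Your proof is correct and follows essentially the same route as the paper's: both verify $R_f\cdot\bm n>0$ for all GQL directions $\bm n\in\{\bm e_1\}\cup\{\bm n_*\}$ by splitting the numerator into $[\sigma^+\bU^+-\bF(\bU^+)]$ and $[\bF(\bU^-)-\sigma^-\bU^-]$, peeling off the slacks $\sigma^+-(v^++c^+)\ge0$ and $(v^--c^-)-\sigma^-\ge0$ against $\bU^\pm\cdot\bm n>0$, and invoking Lemma~2.8 for the remaining bracketed terms. No gaps.
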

\begin{proof}
	For $\bm n=\bm e_1$ or $\bm n=\bm n_*$, \cite[Theorem 5.1]{Wu2021GQL} yields $\bU^+\cdot\bm n>0$ and $\bU^-\cdot\bm n>0$. \Cref{lem28}
	yields
	$$
	\begin{aligned}
		\left[\sigma^+\bU^+-\bm f(\bU^+)\right]\cdot\bm n&=\left[(v^++c^+)\,\bU^+-\bm f(\bU^+)\right]\cdot\bm n+\left[\sigma^+-(v^++c^+)\right]
		(\bU^+\cdot\bm n)\\
		&\ge\left[c^+\bU^++\left(v^+\bU^+-\bm f(\bU^+)\right)\right]\cdot\bm n>0,\\
		\left[\bm f(\bU^-)-\sigma^-\bU^-\right]\cdot\bm n&=\left[\bm f(\bU^-)-(v^--c^-)\,\bU^-\right]\cdot\bm n+\left[(v^--c^+)-\sigma^-\right]
		(\bU^-\cdot\bm n)\\
		&\ge\left[c^-\bU^--\left(v^-\bU^--\bm f(\bU^-)\right)\right]\cdot\bm n>0.
	\end{aligned}
	$$
	Therefore,
	$$
	R_f(\sigma^+,\sigma^-,\bU^+,\bU^-)\cdot\bm n\stackrel{\eqref{8}}{=}\frac{\left[\bm f(\bU^-)-\sigma^-\bU^-\right]\cdot\bm n+
		\left[\sigma^+\bU^+-\bm f(\bU^+)\right]\cdot\bm n}{\sigma^+-\sigma^-}>0.
	$$
	Hence, according to \cite[Theorem 5.1]{Wu2021GQL}, $R_f(\sigma^+,\sigma^-,\bU^+,\bU^-)\in{\G}_*={\G}$, and the proof is completed.
\end{proof}

We now state and prove the main result of this subsection.
\begin{lemma}\label{lem210}
	For the 1-D Euler equations of gas dynamics \eqref{1f}, \eqref{20} with the invariant region \eqref{22}, the speeds $\sigma^\pm_{j+\frac12}$
	defined in \eqref{25} and corrected according to Remark \ref{rem21}, satisfy both the BP Conditions \#2 and \#3.
\end{lemma}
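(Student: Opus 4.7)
The plan is to reduce the verification of both BP Conditions \#2 and \#3 to two direct applications of \Cref{lem29}, exploiting the ``max/min with $0$'' structure of the speed formula \eqref{25} to supply the required one-sided bounds.

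First I would check BP Condition \#2. Here the target is $\bU^*_{j+\frac12}=R_f(\sigma^+_{j+\frac12},\sigma^-_{j+\frac12},\bU^+_{j+\frac12},\bU^-_{j+\frac12})$, and I would apply \Cref{lem29} with $\bU^\pm=\bU^\pm_{j+\frac12}$ and $\sigma^\pm=\sigma^\pm_{j+\frac12}$. The hypothesis $\bU^\pm_{j+\frac12}\in{\G}$ of BP Condition \#2 gives admissibility of the input states. The two speed bounds $\sigma^+_{j+\frac12}\ge v^+_{j+\frac12}+c^+_{j+\frac12}$ and $\sigma^-_{j+\frac12}\le v^-_{j+\frac12}-c^-_{j+\frac12}$ are immediate from the $\max$/$\min$ in \eqref{25}, and the strict inequality $\sigma^+_{j+\frac12}>\sigma^-_{j+\frac12}$ holds either intrinsically (since $c^\pm_{j+\frac12}>0$ whenever densities and pressures are positive) or after the desingularization of \Cref{rem21}.

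Next I would handle BP Condition \#3, where the target $\bU^*_j=R_f(\sigma^+_{j+\frac12},\sigma^-_{j-\frac12},\bU^-_{j+\frac12},\bU^+_{j-\frac12})$ uses speeds from two \emph{different} interfaces. I would apply \Cref{lem29} again with the identifications $\bU^+\leftrightarrow\bU^-_{j+\frac12}$, $\bU^-\leftrightarrow\bU^+_{j-\frac12}$, $\sigma^+\leftrightarrow\sigma^+_{j+\frac12}$, and $\sigma^-\leftrightarrow\sigma^-_{j-\frac12}$. The needed bounds are then $\sigma^+_{j+\frac12}\ge v^-_{j+\frac12}+c^-_{j+\frac12}$ and $\sigma^-_{j-\frac12}\le v^+_{j-\frac12}-c^+_{j-\frac12}$, and these again follow directly from \eqref{25}: the first because $v^-_{j+\frac12}+c^-_{j+\frac12}$ is one of the three quantities inside the $\max$ defining $\sigma^+_{j+\frac12}$, and the second because $v^+_{j-\frac12}-c^+_{j-\frac12}$ is one of the three quantities inside the $\min$ defining $\sigma^-_{j-\frac12}$. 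Positivity of $\sigma^+_{j+\frac12}-\sigma^-_{j-\frac12}$ is handled just as before.

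There is no real obstacle: the lemma is essentially a packaging step, and the only subtle point is the bookkeeping about which one-sided value goes with which speed in the definition of $\bU^*_j$. I would write the argument as a short two-paragraph proof, one paragraph per condition, each ending with an invocation of \Cref{lem29}.
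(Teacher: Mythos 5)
Your proposal is correct and follows essentially the same route as the paper: both conditions are reduced to \Cref{lem29} by reading off the required one-sided bounds $\sigma^+_{j+\frac12}\ge v^+_{j+\frac12}+c^+_{j+\frac12}$, $\sigma^-_{j+\frac12}\le v^-_{j+\frac12}-c^-_{j+\frac12}$ (for Condition \#2) and $\sigma^+_{j+\frac12}\ge v^-_{j+\frac12}+c^-_{j+\frac12}$, $\sigma^-_{j-\frac12}\le v^+_{j-\frac12}-c^+_{j-\frac12}$ (for Condition \#3) directly from the $\max$/$\min$ structure of \eqref{25}, with the desingularization of \Cref{rem21} supplying strict separation of the speeds. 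Your bookkeeping of which state pairs with which speed in $\bU^*_j$ matches the paper exactly.
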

\begin{proof}
	We note that \eqref{25} together with the correction introduced in Remark \ref{rem21} imply $\,\sigma^+_{j+\frac12}\ge v^+_{j+\frac12}+c^+_{j+\frac12}$,
	$\,\sigma^-_{j+\frac12}\le v^-_{j+\frac12}-c^-_{j+\frac12}$, and $\,\sigma^+_{j+\frac12}>\sigma^-_{j+\frac12}$, and then we use \Cref{lem29}
	to obtain $\bU^*_{j+\frac12}=R_f\big(\sigma^+_{j+\frac12},\sigma^-_{j+\frac12},\bU^+_{j+\frac12},\bU^-_{j+\frac12}\big)\in{\G}$, which
	means that the BP Condition \#2 is satisfied. Similarly, because $\,\sigma^+_{j+\frac12}\ge v^-_{j+\frac12}+c^-_{j+\frac12}$,
	$\,\sigma^-_{j-\frac12}\le v^+_{j-\frac12}-c^+_{j-\frac12}$, and $\,\sigma^+_{j+\frac12}>\sigma^-_{j-\frac12}$, \Cref{lem29} yields\\
	$\bU^*_j=R_f\big(\sigma^+_{j+\frac12},\sigma^-_{j-\frac12},\bU^-_{j+\frac12},\bU^+_{j-\frac12}\big)\in{\G}$, so that the BP Condition
	\#3 is satisfied. The proof is thus completed.
\end{proof}

\subsubsection{Enforcing the BP Condition \#4}\label{sec233}
Our numerical experiments (not shown in this paper) clearly demonstrate that for the Euler equations of gas dynamics, the use of the
anti-diffusion \eqref{7} may lead to appearance of negative pressures. In order to address this issue, we replace the anti-diffusion term
$\bd_{j+\frac12}$ with
\begin{equation}
	\begin{aligned}
		&\widetilde\bd_{j+\frac12}=\beta_{j+\frac12}\bd_{j+\frac12}=
		\beta_{j+\frac12}\,{\rm minmod}\,\big(\bU^+_{j+\frac12}-\bU^*_{j+\frac12},\bU^*_{j+\frac12}-\bU^-_{j+\frac12}\big),\\
		&\beta_{j+\frac12}:=\min\bigg\{\frac{p\big(\bU^*_{j+\frac12}\big)-\hat\varepsilon_{j+\frac12}}
		{p\big(\bU^*_{j+\frac12}\big)-p_{j+\frac12}^{\min}},1\bigg\},\\
		&\hat\varepsilon_{j+\frac12}:=\min\big\{10^{-13},p\big(\bU^*_{j+\frac12}\big)\big\},\quad p_{j+\frac12}^{\min}:=
		\min\big\{p\big(\bU^{*,+}_{j+\frac12}\big),p\big(\bU^{*,-}_{j+\frac12}\big)\big\},
	\end{aligned}
	\label{26}
\end{equation}
where $\bU^{*,\pm}_{j+\frac12}$ are defined in \eqref{9}.
\begin{lemma}\label{lem211}
	For the 1-D Euler equations of gas dynamics \eqref{1f}, \eqref{20} with the invariant region \eqref{22}, the modified quantity
	$$
	\tilde\bU^{*,\pm}_{j+\frac12}:=
	\bU^*_{j+\frac12}-\frac{\sigma^\pm_{j+\frac12}}{\sigma^+_{j+\frac12}-\sigma^-_{j+\frac12}}\widetilde\bd_{j+\frac12}
	$$
	satisfies the BP Conditions \#4.
\end{lemma}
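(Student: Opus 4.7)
The starting observation is that the construction in \eqref{26} simply rescales the anti-diffusion by the scalar $\beta_{j+\frac12}\in[0,1]$, which immediately rewrites the modified intermediate states as the convex combination
\[
\tilde\bU^{*,\pm}_{j+\frac12} = (1-\beta_{j+\frac12})\,\bU^*_{j+\frac12} + \beta_{j+\frac12}\,\bU^{*,\pm}_{j+\frac12}.
\]
Armed with this identity, the task splits into (i) verifying positivity of the density of $\bU^{*,\pm}_{j+\frac12}$ (so that the convex combination stays in the half-space where $p(\bU)$ is concave), and (ii) applying Jensen's inequality to control the pressure, in direct analogy with the proof of \Cref{lem27}.

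For the density, I would run a short case analysis on the sign of the density component $d_\rho$ of $\bd_{j+\frac12}$. By the componentwise minmod definition, $|d_\rho|\le\min(|\rho^+_{j+\frac12}-\rho^*_{j+\frac12}|,|\rho^*_{j+\frac12}-\rho^-_{j+\frac12}|)$ and $d_\rho$ agrees in sign with both of those differences (or vanishes). Since the coefficients $\sigma^\pm_{j+\frac12}/(\sigma^+_{j+\frac12}-\sigma^-_{j+\frac12})$ belong to $[-1,0]\cup[0,1]$, each sign scenario reduces to an elementary sandwich estimate that squeezes the density of $\bU^{*,\pm}_{j+\frac12}$ between the positive values $\rho^*_{j+\frac12}$ and $\rho^\mp_{j+\frac12}$, hence it stays strictly positive. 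Consequently the density of $\tilde\bU^{*,\pm}_{j+\frac12}$ is positive as a convex combination of positive numbers.

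For the pressure, concavity of $p(\bU)$ on $\{\rho>0\}$ gives, via Jensen's inequality,
\[
p\bigl(\tilde\bU^{*,\pm}_{j+\frac12}\bigr)\ge(1-\beta_{j+\frac12})\,p\bigl(\bU^*_{j+\frac12}\bigr)+\beta_{j+\frac12}\,p\bigl(\bU^{*,\pm}_{j+\frac12}\bigr)\ge(1-\beta_{j+\frac12})\,p\bigl(\bU^*_{j+\frac12}\bigr)+\beta_{j+\frac12}\,p^{\min}_{j+\frac12}.
\]
In the regime $p^{\min}_{j+\frac12}\ge\hat\varepsilon_{j+\frac12}$ the definition \eqref{26} gives $\beta_{j+\frac12}=1$, so the right-hand side is at least $p^{\min}_{j+\frac12}\ge\hat\varepsilon_{j+\frac12}>0$. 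In the complementary regime $p^{\min}_{j+\frac12}<\hat\varepsilon_{j+\frac12}$, substituting the explicit formula $\beta_{j+\frac12}=(p(\bU^*_{j+\frac12})-\hat\varepsilon_{j+\frac12})/(p(\bU^*_{j+\frac12})-p^{\min}_{j+\frac12})$ collapses the right-hand side to exactly $\hat\varepsilon_{j+\frac12}>0$. In both cases $\tilde\bU^{*,\pm}_{j+\frac12}\in\mathcal{G}$, so the BP Condition \#4 holds.

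The only delicate step is the density case analysis, which is really bookkeeping over the three minmod cases; the pressure argument is essentially a transcription of the Jensen trick already used in the proof of \Cref{lem27}, so I do not anticipate any serious obstacle beyond carefully aligning the definitions of $\hat\varepsilon_{j+\frac12}$ and $p^{\min}_{j+\frac12}$ with the chosen $\beta_{j+\frac12}$.
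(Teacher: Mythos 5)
Your proposal is correct and follows essentially the same route as the paper's proof: the same rewriting of $\tilde\bU^{*,\pm}_{j+\frac12}$ as the convex combination $(1-\beta_{j+\frac12})\bU^*_{j+\frac12}+\beta_{j+\frac12}\bU^{*,\pm}_{j+\frac12}$, the same minmod sign/case analysis to squeeze $\rho^{*,\pm}_{j+\frac12}$ between positive quantities, and the same two-case Jensen argument on the concave pressure that collapses to exactly $\hat\varepsilon_{j+\frac12}>0$ when $\beta_{j+\frac12}<1$. Your explicit sign bookkeeping in the density step is, if anything, slightly more careful than the paper's ``otherwise $\rho^{*,+}_{j+\frac12}=\rho^*_{j+\frac12}$'' shortcut, but the substance is identical.
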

\begin{proof}
	First, we use \eqref{26} and \eqref{9} to rewrite $\tilde\bU^{*,\pm}_{j+\frac12}$ as
	$$
	\begin{aligned}
		\tilde\bU^{*,\pm}_{j+\frac12}&=\bU^*_{j+\frac12}-\beta_{j+\frac12}\frac{\sigma^\pm_{j+\frac12}}{\sigma^+_{j+\frac12}-\sigma^-_{j+\frac12}}
		\,{\rm minmod}\,\big(\bU^+_{j+\frac12}-\bU^*_{j+\frac12},\bU^*_{j+\frac12}-\bU^-_{j+\frac12}\big)\\
		&=\bU^*_{j+\frac12}-\beta_{j+\frac12}\big(\bU^*_{j+\frac12}-\bU^{*,\pm}_{j+\frac12}\big)=\big(1-\beta_{j+\frac12}\big)\bU^*_{j+\frac12}+
		\beta_{j+\frac12}\bU^{*,\pm}_{j+\frac12}.
	\end{aligned}
	$$
	Since $\rho^\pm_{j+\frac12}>0$ and $\rho^*_{j+\frac12}>0$, in order to show that $\tilde\rho^{*,\pm}_{j+\frac12}>0$, one has to show that
	$\rho^{*,\pm}_{j+\frac12}>0$. Indeed, if $\rho^+_{j+\frac12}>\rho^*_{j+\frac12}>\rho^-_{j+\frac12}$, then
	$$
	\begin{aligned}
		\rho^{*,+}_{j+\frac12}&=\rho^*_{j+\frac12}-\frac{\sigma^+_{j+\frac12}}{\sigma^+_{j+\frac12}-\sigma^-_{j+\frac12}}
		\,{\rm minmod}\,\big(\rho^+_{j+\frac12}-\rho^*_{j+\frac12},\rho^*_{j+\frac12}-\rho^-_{j+\frac12}\big)
		\ge\rho^*_{j+\frac12}-\big(\rho^*_{j+\frac12}-\rho^-_{j+\frac12}\big)=\rho^-_{j+\frac12}>0,
	\end{aligned}
	$$
	and otherwise, $\rho^{*,+}_{j+\frac12}=\rho^*_{j+\frac12}>0$. Similarly, if $\rho^+_{j+\frac12}<\rho^*_{j+\frac12}<\rho^-_{j+\frac12}$, then
	$$
	\begin{aligned}
		\rho^{*,-}_{j+\frac12}&=\rho^*_{j+\frac12}-\frac{\sigma^-_{j+\frac12}}{\sigma^+_{j+\frac12}-\sigma^-_{j+\frac12}}
		\,{\rm minmod}\,\big(\rho^+_{j+\frac12}-\rho^*_{j+\frac12},\rho^*_{j+\frac12}-\rho^-_{j+\frac12}\big)
		\ge\rho^*_{j+\frac12}+
		\big(\rho^+_{j+\frac12}-\rho^*_{j+\frac12}\big)=\rho^+_{j+\frac12}>0,
	\end{aligned}
	$$
	and otherwise, $\rho^{*,-}_{j+\frac12}=\rho^*_{j+\frac12}>0$.
	
	In order to prove that $p\big(\tilde\bU^{*,\pm}_{j+\frac12}\big)>0$, we consider two possible cases. If
	$p_{j+\frac12}^{\min}\ge\hat\varepsilon_{j+\frac12}$, \eqref{26} yields $\beta_{j+\frac12}=1$ and thus
	$p\big(\tilde\bU^{*,\pm}_{j+\frac12}\big)=p\big(\bU^{*,\pm}_{j+\frac12}\big)\ge p_{j+\frac12}^{\min}\ge\hat\varepsilon_j>0$. Otherwise, we
	apply Jensen's inequality to the pressure function $p(\bU)$, which is concave when $\rho>0$, and obtain
	$$
	\begin{aligned}
		p\big(\tilde\bU^{*,\pm}_{j+\frac12}\big)&=p\big(\big(1-\beta_{j+\frac12}\big)\,\bU_{j+\frac12}^*+\beta_{j+\frac12}\bU^{*,\pm}_{j+\frac12}
		\big)\ge(1-\beta_{j+\frac12})p\big(\bU_{j+\frac12}^*\big)+\beta_{j+\frac12}p\big(\bU^{*,\pm}_{j+\frac12}\big)\\
		&\stackrel{\eqref{26}}{=}\bigg(1-\frac{p\big(\bU^*_{j+\frac12}\big)-\hat\varepsilon_{j+\frac12}}
		{p\big(\bU^*_{j+\frac12}\big)-p_{j+\frac12}^{\min}}\bigg)p\big(\bU_{j+\frac12}^*\big)+
		\frac{p\big(\bU^*_{j+\frac12}\big)-\hat\varepsilon_{j+\frac12}}{p\big(\bU^*_{j+\frac12}\big)-p_{j+\frac12}^{\min}}\,
		p\big(\bU^{*,\pm}_{j+\frac12}\big)\\
		&\ge\frac{\hat\varepsilon_{j+\frac12}-p_{j+\frac12}^{\min}}{p\big(\bU^*_{j+\frac12}\big)-p_{j+\frac12}^{\min}}\,p\big(\bU_{j+\frac12}^*\big)
		+\frac{p\big(\bU^*_{j+\frac12}\big)-\hat\varepsilon_{j+\frac12}}{p\big(\bU^*_{j+\frac12}\big)-p_{j+\frac12}^{\min}}\,
		p_{j+\frac12}^{\min}=\hat\varepsilon_{j+\frac12}>0.
	\end{aligned}
	$$
	The proof is completed.
\end{proof}

\subsubsection{Constructing BPCU Schemes}
Based on \Cref{lem27,lem210,lem211}, the BPCU schemes for the 1-D Euler equations of gas dynamics \eqref{1f}, \eqref{20} can be constructed
via the following steps.

\noindent
{\bf Step 1.} Modify the reconstructed point values $\bU_{j+\frac12}^\pm$ using \eqref{23}.

\noindent
{\bf Step 2.} Modify the ``built-in'' anti-diffusion term $\bd_{j+\frac12}$ using \eqref{26}.

\section{Two-Dimensional BPCU Schemes}\label{sec3}
The 1-D BP framework for CU
schemes proposed in \S\ref{sec22} can be extended to multiple dimensions. Without loss of generality, we present the 2-D BP framework and
2-D BPCU schemes, while the extensions to higher dimensions are similar and thus omitted.

Consider the 2-D hyperbolic system of conservation laws
\begin{equation}
	\bU_t+\bF(\bU)_x+\bG(\bU)_y=\bm0.
	\label{28}
\end{equation}
Assume that the computational mesh is uniform with Cartesian cells
$\Omega_{j,k}=[x_{j-\frac12},x_{j+\frac12}]\times[y_{k-\frac12},y_{k+\frac12}]$. Let $\,\xbar\bU_{j,k}^{\,n}$ be the approximation of the
cell average of $\bU(x,y,t^n)$ over $\Omega_{j,k}$ at time $t=t^n$. For the sake of convenience, we focus on the first-order forward Euler
time discretization, while all of the results below are also applicable to higher-order SSP time discretization.

The 2-D CU schemes read as
\begin{equation}
	\xbar\bU^{\,n+1}_{j,k}=\,\xbar\bU^{\,n}_{j,k}-\lambda^n\left(\hat\bF_{j+\frac12,k}-\hat\bF_{j-\frac12,k}\right)
	-\mu^n\left(\hat\bG_{j,k+\frac12}-\hat\bG_{j,k-\frac12}\right),\quad\mu^n:=\frac{\dt^n}{\dy},
	\label{29}
\end{equation}
with $\lambda^n$ and $\dt^n$ given by \eqref{2} and the CU numerical fluxes
\begin{equation}
	\begin{aligned}
		\hat\bF_{j+\frac12,k}&=\frac{\sigma^+_{j+\frac12,k}\bF\big(\bU^-_{j+\frac12,k}\big)-\sigma^-_{j+\frac12,k}\bF\big(\bU^+_{j+\frac12,k}\big)}
		{\sigma^+_{j+\frac12,k}-\sigma^-_{j+\frac12,k}}
		+\frac{\sigma^+_{j+\frac12,k}\sigma^-_{j+\frac12,k}}
		{\sigma^+_{j+\frac12,k}-\sigma^-_{j+\frac12,k}}\big(\bU^+_{j+\frac12,k}-\bU^-_{j+\frac12,k}-\bd_{j+\frac12,k}\big),\\
		\hat\bG_{j,k+\frac12}&=\frac{\sigma^+_{j,k+\frac12}\bG\big(\bU^-_{j,k+\frac12}\big)-\sigma^-_{j,k+\frac12}\bG\big(\bU^+_{j,k+\frac12}\big)}
		{\sigma^+_{j,k+\frac12}-\sigma^-_{j,k+\frac12}}
		+\frac{\sigma^+_{j,k+\frac12}\sigma^-_{j,k+\frac12}}
		{\sigma^+_{j,k+\frac12}-\sigma^-_{j,k+\frac12}}\big(\bU^+_{j,k+\frac12}-\bU^-_{j,k+\frac12}-\bd_{j,k+\frac12}\big).
	\end{aligned}
	\label{30}
\end{equation}
In \eqref{30}, $\bU^\pm_{j+\frac12,k}$ and $\bU^\pm_{j,k+\frac12}$ are the one-sided values of the reconstructed solution at the midpoints
of the cell interfaces $(x,y)=(x_{j+\frac12},y_k)$ and $(x,y)=(x_j,y_{k+\frac12})$, respectively:
\begin{equation}
	\begin{aligned}
		&\bU^-_{j+\frac12,k}=\,\xbar{\bU}_{j,k}^{\,n}+\frac{\dx}{2}(\bU_x)_{j,k}^n,~~
		\bU^+_{j+\frac12,k}=\,\xbar{\bU}_{j+1,k}^{\,n}-\frac{\dx}{2}(\bU_x)_{j+1,k}^n,\\
		&\bU^-_{j,k+\frac12}=\,\xbar{\bU}_{j,k}^{\,n}+\frac{\dy}{2}(\bU_y)_{j,k}^n,~~
		\bU^+_{j,k+\frac12}=\,\xbar{\bU}_{j+1,k}^{\,n}-\frac{\dy}{2}(\bU_y)_{j,k+1}^n,
	\end{aligned}
	\label{31}
\end{equation}
where
\begin{equation}
	\begin{aligned}
		&(\bU_x)_{j,k}^n={\rm minmod}\left(\theta\,\frac{\xbar{\bU}_{j,k}^{\,n}-\,\xbar{\bU}_{j-1,k}^{\,n}}{\dx},
		\frac{\xbar{\bU}_{j+1,k}^{\,n}-\,\xbar{\bU}_{j-1,k}^{\,n}}{2\dx},\theta\,\frac{\xbar{\bU}_{j+1,k}^{\,n}-\,\xbar{\bU}_{j,k}^{\,n}}{\dx}
		\right),\\
		&(\bU_y)_{j,k}^n={\rm minmod}\left(\theta\,\frac{\xbar{\bU}_{j,k}^{\,n}-\,\xbar{\bU}_{j,k-1}^{\,n}}{\dy},
		\frac{\xbar{\bU}_{j,k+1}^{\,n}-\,\xbar{\bU}_{j,k-1}^{\,n}}{2\dy},\theta\,\frac{\xbar{\bU}_{j,k+1}^{\,n}-\,\xbar{\bU}_{j,k}^{\,n}}{\dy}
		\right),
	\end{aligned}\quad\theta\in[1,2].
	\label{32}
\end{equation}
The quantities $\sigma^\pm_{j+\frac12,k}$ and $\sigma^\pm_{j,k+\frac12}$ in \eqref{30} denote the one-sided local speeds of propagation in
the $x$- and $y$-directions, respectively, and they can be estimated using the smallest and largest eigenvalues of the Jacobians
$A(\bU):=\partial\bF/\partial\bU$ and $B(\bU):=\partial\bG/\partial\bU$:
\begin{equation}
	\begin{aligned}
		\sigma^-_{j+\frac12,k}&=\min\big\{\lambda_1\big(A\big(\bU^-_{j+\frac12,k}\big)\big),
		\lambda_1\big(A\big(\bU^+_{j+\frac12,k}\big)\big),0\big\},\\
		\sigma^+_{j+\frac12,k}&=\max\big\{\lambda_m\big(A\big(\bU^-_{j+\frac12,k}\big)\big),
		\lambda_m\big(A\big(\bU^+_{j+\frac12,k}\big)\big),0\big\},\\
		\sigma^-_{j,k+\frac12}&=\min\big\{\lambda_1\big(B\big(\bU^-_{j,k+\frac12}\big)\big),
		\lambda_1\big(B\big(\bU^+_{j,k+\frac12}\big)\big),0\big\},\\
		\sigma^+_{j,k+\frac12}&=\max\big\{\lambda_m\big(B\big(\bU^-_{j,k+\frac12}\big)\big),
		\lambda_m\big(B\big(\bU^+_{j,k+\frac12}\big)\big),0\big\}.
	\end{aligned}
	\label{33}
\end{equation}
As in the 1-D case, \eqref{33} should be modified as in \Cref{rem21} to desingularize the computations in \eqref{30}.

Finally, $\bd_{j+\frac12,k}$ and $\bd_{j,k+\frac12}$ in \eqref{30} are the ``built-in'' anti-diffusion terms given by (see
\cite{ChertockCuiKurganovOzcanTadmor2018})
\begin{equation}
	\begin{aligned}
		&\bd_{j+\frac12,k}={\rm minmod}\,\big(\bU^+_{j+\frac12,k}-\bU^*_{j+\frac12,k},\bU^*_{j+\frac12,k}-\bU^-_{j+\frac12,k}\big),\\
		&\bd_{j,k+\frac12}={\rm minmod}\,\big(\bU^+_{j,k+\frac12}-\bU^*_{j,k+\frac12},\bU^*_{j,k+\frac12}-\bU^-_{j,k+\frac12}\big),
	\end{aligned}
	\label{34}
\end{equation}
where
\begin{equation}
	\begin{aligned}
		&\bU^*_{j+\frac12,k}=R_f\big(\sigma^+_{j+\frac12,k},\sigma^-_{j+\frac12,k},\bU^+_{j+\frac12,k},\bU^-_{j+\frac12,k}\big),\\
		&R_f(\sigma^+,\sigma^-,\bU^+,\bU^-):=\frac{\sigma^+\bU^+-\sigma^-\bU^--\bF(\bU^+)+\bF(\bU^-)}{\sigma^+-\sigma^-},\\
		&\bU^*_{j,k+\frac12}=R_g\big(\sigma^+_{j,k+\frac12},\sigma^-_{j,k+\frac12},\bU^+_{j,k+\frac12},\bU^-_{j,k+\frac12}\big),\\
		&R_g(\sigma^+,\sigma^-,\bU^+,\bU^-):=\frac{\sigma^+\bU^+-\sigma^-\bU^--\bG(\bU^+)+\bG(\bU^-)}{\sigma^+-\sigma^-}.
	\end{aligned}
	\label{35}
\end{equation}

The CU scheme \eqref{29}--\eqref{35} is, in general, not BP. In order to design a BPCU scheme, one may need to properly modify the
reconstructed point values ($\bU_{j+\frac12,k}^\pm$ and $\bU_{j,k+\frac12}^\pm$), one-sided local speeds of propagation
($\sigma_{j+\frac12,k}^\pm$ and $\sigma_{j,k+\frac12}^\pm$), and ``built-in'' anti-diffusion terms ($\bd_{j+\frac12,k}$ and
$\bd_{j,k+\frac12}$). To this end, we first introduce the following auxiliary quantities:
\begin{equation}
	\begin{aligned}
		&\bU^{*,\pm}_{j+\frac12,k}:=\bU^*_{j+\frac12,k}-\frac{\sigma^\pm_{j+\frac12,k}}{\sigma^+_{j+\frac12,k}-\sigma^-_{j+\frac12,k}}\,
		\bd_{j+\frac12,k},\\
		&\bU^{*,x}_{j,k}:=R_f\big(\sigma^+_{j+\frac12,k},\sigma^-_{j-\frac12,k},\bU^-_{j+\frac12,k},\bU^+_{j-\frac12,k}\big),\\
		&\bU^{*,\pm}_{j,k+\frac12}:=\bU^*_{j,k+\frac12}-\frac{\sigma^\pm_{j,k+\frac12}}{\sigma^+_{j,k+\frac12}-\sigma^-_{j,k+\frac12}}\,
		\bd_{j,k+\frac12},\\
		&\bU^{*,y}_{j,k}:=R_g\big(\sigma^+_{j,k+\frac12},\sigma^-_{j,k-\frac12},\bU^-_{j,k+\frac12},\bU^+_{j,k-\frac12}\big),
	\end{aligned}
	\label{36}
\end{equation}
and prove the following theorem, which is crucial for the development of 2-D BPCU schemes.
\begin{theorem}\label{thm31}
	The 2-D CU scheme \eqref{29}--\eqref{36} admits the following convex decomposition:
	\begin{equation}
		\begin{aligned}
			\xbar\bU^{\,n+1}_{j,k}&=\bigg(\frac{\lambda^n\alpha_1}{2\big(\lambda^n\alpha_1+\mu^n\alpha_2\big)}-\lambda^n\big(\sigma^+_{j-\frac12,k}-
			\sigma^-_{j-\frac12,k}\big)\bigg)\bU^+_{j-\frac12,k}+\lambda^n\sigma^+_{j-\frac12,k}\bU^{*,-}_{j-\frac12,k}\\
			&+\bigg(\frac{\lambda^n\alpha_1}{2\big(\lambda^n\alpha_1+\mu^n\alpha_2\big)}-\lambda^n\big(\sigma^+_{j+\frac12,k}-
			\sigma^-_{j+\frac12,k}\big)\bigg)\bU^-_{j+\frac12,k}-\lambda^n\sigma^-_{j+\frac12,k}\bU^{*,+}_{j+\frac12,k}
			+\lambda^n\big(\sigma^+_{j+\frac12,k}-\sigma^-_{j-\frac12,k}\big)\,\bU_{j,k}^{*,x}\\
			&+\bigg(\frac{\mu^n\alpha_2}{2\big(\lambda^n\alpha_1+\mu^n\alpha_2\big)}-\mu^n\big(\sigma^+_{j,k-\frac12}-\sigma^-_{j,k-\frac12}\big)\bigg)
			\bU^+_{j,k-\frac12}+\mu^n\sigma^+_{j,k-\frac12}\bU^{*,-}_{j,k-\frac12}\\
			&+\bigg(\frac{\mu^n\alpha_2}{2\big(\lambda^n\alpha_1+\mu^n\alpha_2\big)}-\mu^n\big(\sigma^+_{j,k+\frac12}-\sigma^-_{j,k+\frac12}\big)\bigg)
			\bU^-_{j,k+\frac12}-\mu^n\sigma^-_{j,k+\frac12,k}\bU^{*,+}_{j,k+\frac12}
			+\mu^n\big(\sigma^+_{j,k+\frac12}-\sigma^-_{j,k-\frac12}\big)\,\bU_{j,k}^{*,y}
		\end{aligned}
		\label{37}
	\end{equation}
	under the CFL condition
	\begin{equation}
		\lambda^n\alpha_1+\mu^n\alpha_2\le\frac12,\quad\alpha_1:=\max_{j,k}\big\{\sigma^+_{j+\frac12,k}-\sigma^-_{j+\frac12,k}\big\},\quad
		\alpha_2:=\max_{j,k}\big\{\sigma^+_{j,k+\frac12}-\sigma^-_{j,k+\frac12}\big\}.
		\label{38}
	\end{equation}
\end{theorem}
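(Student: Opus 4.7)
The plan is to mimic the strategy of \Cref{thm21} but apply it directionally in both $x$ and $y$. First, I would reformulate each numerical flux in the style of equations \eqref{12}--\eqref{13} to expose the $\bU^{*,\pm}$ quantities. Specifically, factoring out $\bF(\bU^-_{j+\frac12,k})$ and recognizing $R_f$ inside the bracket, I would rewrite
$$
\hat\bF_{j+\frac12,k}=\bF\big(\bU^-_{j+\frac12,k}\big)-\sigma^-_{j+\frac12,k}\bU^-_{j+\frac12,k}+\sigma^-_{j+\frac12,k}\bU^{*,+}_{j+\frac12,k},
$$
and analogously for $\hat\bF_{j-\frac12,k}$ (producing $\sigma^+_{j-\frac12,k}\bU^{*,-}_{j-\frac12,k}$), $\hat\bG_{j,k+\frac12}$, and $\hat\bG_{j,k-\frac12}$. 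These four identities follow from the exact same algebraic manipulation carried out in the 1-D proof, just with $\bF$ replaced by $\bG$ in the $y$-direction case and using the definitions \eqref{36} of $\bU^{*,\pm}_{j\pm\frac12,k}$, $\bU^{*,\pm}_{j,k\pm\frac12}$, $\bU^{*,x}_{j,k}$, and $\bU^{*,y}_{j,k}$.

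Next, I would split the cell average $\xbar\bU^{\,n}_{j,k}$ into two pieces using the linear-reconstruction identity
$$
\xbar\bU^{\,n}_{j,k}=\tfrac12\big(\bU^+_{j-\frac12,k}+\bU^-_{j+\frac12,k}\big)=\tfrac12\big(\bU^+_{j,k-\frac12}+\bU^-_{j,k+\frac12}\big),
$$
which follows from \eqref{31}. Weighting these two equal expressions by $w_x:=\frac{\lambda^n\alpha_1}{\lambda^n\alpha_1+\mu^n\alpha_2}$ and $w_y:=\frac{\mu^n\alpha_2}{\lambda^n\alpha_1+\mu^n\alpha_2}$ (so that $w_x+w_y=1$) gives a decomposition of $\xbar\bU^{\,n}_{j,k}$ that pairs naturally with the $x$- and $y$-flux differences respectively. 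Substituting the reformulated fluxes into \eqref{29} and grouping the terms arising from the $x$-flux difference with the $w_x$-portion of $\xbar\bU^{\,n}_{j,k}$ (and similarly for $y$), the computation reduces, in each direction, to exactly the 1-D manipulation that produced \eqref{10}. This yields the claimed expression \eqref{37}, with the $\bU^{*,x}_{j,k}$ and $\bU^{*,y}_{j,k}$ terms coming from the telescoping combination $\sigma^+_{j+\frac12,k}\bU^-_{j+\frac12,k}-\sigma^-_{j-\frac12,k}\bU^+_{j-\frac12,k}-\bF(\bU^-_{j+\frac12,k})+\bF(\bU^+_{j-\frac12,k})$ divided by $\sigma^+_{j+\frac12,k}-\sigma^-_{j-\frac12,k}$, exactly as in the 1-D argument.

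Finally, it remains to verify convexity: that every coefficient in \eqref{37} is nonnegative and they sum to one. Summing is immediate by construction since the $w_x$ and $w_y$ pieces already sum to one and the remaining terms telescope. For nonnegativity, the only nontrivial coefficients are those of $\bU^\pm_{j\pm\frac12,k}$ and $\bU^\pm_{j,k\pm\frac12}$; using the upper bounds $\sigma^+_{j\pm\frac12,k}-\sigma^-_{j\pm\frac12,k}\le\alpha_1$ and $\sigma^+_{j,k\pm\frac12}-\sigma^-_{j,k\pm\frac12}\le\alpha_2$, each reduces to the inequality $\lambda^n\alpha_1+\mu^n\alpha_2\le\tfrac12$, which is precisely the CFL condition \eqref{38}.

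I do not anticipate a serious obstacle: the reformulation of each numerical flux is a direct copy of the 1-D computation, and the telescoping that produces the $\bU^{*,x}$ and $\bU^{*,y}$ terms is identical in each coordinate direction. The one genuinely new ingredient is the choice of the weights $w_x,w_y$, whose particular form $\propto(\lambda^n\alpha_1,\mu^n\alpha_2)$ is exactly what is required so that the CFL condition \eqref{38} certifies nonnegativity uniformly in both directions; discovering (rather than verifying) this weighting is the only step that is not completely mechanical, but once written down it is easy to check.
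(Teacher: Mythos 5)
Your proposal is correct and follows essentially the same route as the paper: reformulate the four fluxes as in \eqref{12}--\eqref{13}, split $\,\xbar\bU^{\,n}_{j,k}$ with exactly the weights $\lambda^n\alpha_1/(\lambda^n\alpha_1+\mu^n\alpha_2)$ and $\mu^n\alpha_2/(\lambda^n\alpha_1+\mu^n\alpha_2)$, and run the 1-D telescoping argument in each direction. The convexity check via $\sigma^+-\sigma^-\le\alpha_{1,2}$ and the CFL condition \eqref{38} also matches the paper's (implicit) verification.
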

\begin{proof}
	Similarly to \eqref{12} and \eqref{13}, the fluxes $\hat\bF_{j\pm\frac12,k}$ and $\hat\bG_{j,k\pm\frac12}$ can be reformulated as
	$$
	\begin{aligned}
		&\hat\bF_{j+\frac12,k}=\bF\big(\bU^-_{j+\frac12,k}\big)-\sigma^-_{j+\frac12,k}\bU^-_{j+\frac12,k}+
		\sigma^-_{j+\frac12,k}\bU_{j+\frac12,k}^{*,+},\\
		&\hat\bF_{j-\frac12,k}=\bF\big(\bU^+_{j-\frac12,k}\big)
		-\sigma^+_{j-\frac12,k}\bU^+_{j-\frac12,k}+\sigma^+_{j-\frac12,k}\bU_{j-\frac12,k}^{*,-},\\
		&\hat\bG_{j,k+\frac12}=\bG\big(\bU^-_{j,k+\frac12}\big)-\sigma^-_{j,k+\frac12}\bU^-_{j,k+\frac12}+
		\sigma^-_{j,k+\frac12}\bU_{j,k+\frac12}^{*,+},\\
		&\hat\bG_{j,k-\frac12}=\bG\big(\bU^+_{j,k-\frac12}\big)
		-\sigma^+_{j,k-\frac12}\bU^+_{j,k-\frac12}+\sigma^+_{j,k-\frac12}\bU_{j,k-\frac12}^{*,-}.
	\end{aligned}
	$$
	Substituting them into \eqref{29} results in
	\begin{align*}
		&\xbar\bU^{\,n+1}_{j,k}=\frac{\lambda^n\alpha_1}{\lambda^n\alpha_1+\mu^n\alpha_2}\,\xbar\bU^{\,n}_{j,k}+
		\frac{\mu^n\alpha_2}{\lambda^n\alpha_1+\mu^n\alpha_2}\,\xbar\bU^{\,n}_{j,k}\\
		&\hspace*{0.3cm}-\lambda^n\hat\bF_{j+\frac12,k}+\lambda^n\hat\bF_{j-\frac12,k}-
		\mu^n\hat\bG_{j,k+\frac12}+\mu^n\hat\bG_{j,k-\frac12}\\
		&\hspace*{0.3cm}=\frac{\lambda^n\alpha_1}{2\big(\lambda^n\alpha_1+\mu^n\alpha_2\big)}\,\bU^-_{j+\frac12,k}-
		\lambda^n\big(\bF\big(\bU^-_{j+\frac12,k}\big)-\sigma^-_{j+\frac12,k}\bU^-_{j+\frac12,k}+\sigma^-_{j+\frac12,k}\bU_{j+\frac12,k}^{*,+}\big)\\
		&\hspace*{0.3cm}+\lambda^n\big(\sigma^+_{j+\frac12,k}-\sigma^+_{j+\frac12,k}\big)\,\bU^-_{j+\frac12,k}+
		\frac{\lambda^n\alpha_1}{2\big(\lambda^n\alpha_1+\mu^n\alpha_2\big)}\,\bU^+_{j-\frac12,k}\\
		&\hspace*{0.3cm}+\lambda^n\big(\bF\big(\bU^+_{j-\frac12,k}\big)-
		\sigma^+_{j-\frac12,k}\bU^+_{j-\frac12,k}+\sigma^+_{j-\frac12,k}\bU_{j-\frac12,k}^{*,-}\big)+\lambda^n\big(\sigma^-_{j-\frac12,k}-
		\sigma^-_{j-\frac12,k}\big)\,\bU^+_{j-\frac12,k}\\
		&\hspace*{0.3cm}+\frac{\mu^n\alpha_2}{2\big(\lambda^n\alpha_1+\mu^n\alpha_2\big)}\,\bU^-_{j,k+\frac12}-
		\mu^n\big(\bG\big(\bU^-_{j,k+\frac12}\big)-\sigma^-_{j,k+\frac12}\bU^-_{j,k+\frac12}+\sigma^-_{j,k+\frac12}\bU_{j,k+\frac12}^{*,+}\big)\\
		&\hspace*{0.3cm}+\mu^n\big(\sigma^+_{j,k+\frac12}-\sigma^+_{j,k+\frac12}\big)\,\bU^-_{j,k+\frac12}
		+\frac{\mu^n\alpha_2}{2\big(\lambda^n\alpha_1+\mu^n\alpha_2\big)}\,\bU^+_{j,k-\frac12}\\
		&\hspace*{0.3cm}+\mu^n\big(\bG\big(\bU^+_{j,k-\frac12}\big)-
		\sigma^+_{j,k-\frac12}\bU^+_{j,k-\frac12}+\sigma^+_{j,k-\frac12}\bU_{j,k-\frac12}^{*,-}\big)+\mu^n\big(\sigma^-_{j,k-\frac12}-
		\sigma^-_{j,k-\frac12}\big)\,\bU^+_{j,k-\frac12}\\
		&\hspace*{0.3cm}=\bigg[\frac{\lambda^n\alpha_1}{\lambda^n\alpha_1+\mu^n\alpha_2}-
		\lambda^n\big(\sigma_{j-\frac12,k}^+-\sigma_{j-\frac12,k}^-\big)\bigg]
		\bU_{j-\frac12,k}^++\lambda^n\sigma_{j-\frac12,k}^+\bU_{j-\frac12,k}^{*,-}\\
		&\hspace*{0.3cm}+\bigg[\frac{\lambda^n\alpha_1}{\lambda^n\alpha_1+\mu^n\alpha_2}-
		\lambda^n\big(\sigma_{j+\frac12,k}^+-\sigma_{j+\frac12,k}^-\big)\bigg]
		\bU_{j+\frac12,k}^--\lambda^n\sigma_{j+\frac12,k}^-\bU_{j+\frac12,k}^{*,+}\\
		&\hspace*{0.3cm}+\lambda^n\big(\sigma_{j+\frac12,k}^+-\sigma_{j-\frac12,k}^-\big)\,
		\frac{\sigma_{j+\frac12,k}^+\bU_{j+\frac12,k}^--\sigma_{j-\frac12,k}^-\bU_{j-\frac12,k}^+-\bF\big(\bU_{j+\frac12,k}^-\big)+
			\bF\big(\bU_{j-\frac12,k}^+\big)}{\sigma_{j+\frac12,k}^+-\sigma_{j-\frac12,k}^-}\\ 
		&\hspace*{0.3cm}+\bigg[\frac{\mu^n\alpha_2}{\lambda^n\alpha_1+\mu^n\alpha_2}-
		\mu^n\big(\sigma_{j,k-\frac12}^+-\sigma_{j,k-\frac12}^-\big)\bigg]
		\bU_{j,k-\frac12}^++\mu^n\sigma_{j,k-\frac12}^+\bU_{j,k-\frac12}^{*,-}\\
		&\hspace*{0.3cm}+\bigg[\frac{\mu^n\alpha_2}{\lambda^n\alpha_1+\mu^n\alpha_2}-
		\mu^n\big(\sigma_{j,k+\frac12}^+-\sigma_{j,k+\frac12}^-\big)\bigg]
		\bU_{j,k+\frac12}^--\mu^n\sigma_{j,k+\frac12}^-\bU_{j,k+\frac12}^{*,+}\\
		&\hspace*{0.3cm}+\mu^n\big(\sigma_{j,k+\frac12}^+-\sigma_{j,k-\frac12}^-\big)\,
		\frac{\sigma_{j,k+\frac12}^+\bU_{j,k+\frac12}^--\sigma_{j,k-\frac12}^-\bU_{j,k-\frac12}^+-\bG\big(\bU_{j,k+\frac12}^-\big)+
			\bG\big(\bU_{j,k-\frac12}^+\big)}{\sigma_{j,k+\frac12}^+-\sigma_{j,k-\frac12}^-},
	\end{align*}
	which immediately implies the decomposition \eqref{37}. Under the CFL condition \eqref{38}, the decomposition \eqref{37} recasts
	$\,\xbar\bU_{j,k}^{\,n+1}$ as a convex combination of $\bU^\mp_{j\pm\frac12,k}$, $\bU^\mp_{j,k\pm\frac12}$, $\bU^{*,\pm}_{j\pm\frac12,k}$,
	$\bU^{*,\pm}_{j,k\pm\frac12}$, $\bU^{*,x}_{j,k}$, and $\bU^{*,y}_{j,k}$. 
\end{proof}

Thanks to the convexity of ${\G}$, \Cref{thm31} immediately leads to the following sufficient condition for obtaining 2-D BPCU schemes.
\begin{corollary}\label{cor31}
	If
	\begin{equation}
		\bU^\pm_{j+\frac12,k}\in{\G},\quad\bU^\pm_{j,k+\frac12}\in{\G},\quad\bU^{*,\pm}_{j+\frac12,k}\in{\G},\quad
		\bU^{*,\pm}_{j,k+\frac12}\in{\G},\quad\bU^{*,x}_{j,k}\in{\G},\quad\bU^{*,y}_{j,k}\in{\G},
		\label{39f}
	\end{equation}
	for all $j,k$, then $\,\xbar\bU_{j,k}^{\,n+1}\in{\G}$ for all $j,k$ and the CU scheme \eqref{29}--\eqref{30} is BP under the CFL
	condition \eqref{38}.
\end{corollary}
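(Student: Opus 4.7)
The plan is to deduce the corollary as a direct consequence of \Cref{thm31} by invoking the convexity of $\mathcal{G}$. Since \Cref{thm31} already provides the convex decomposition \eqref{37}, the only remaining work is to verify that, under the assumption \eqref{39f}, every state appearing in the decomposition lies in $\mathcal{G}$, and that the coefficients form a genuine convex combination under the CFL condition \eqref{38}.

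First I would observe that the six families of intermediate states appearing on the right-hand side of \eqref{37}---namely $\bU^\mp_{j\pm\frac12,k}$, $\bU^\mp_{j,k\pm\frac12}$, $\bU^{*,\pm}_{j\pm\frac12,k}$, $\bU^{*,\pm}_{j,k\pm\frac12}$, $\bU^{*,x}_{j,k}$, and $\bU^{*,y}_{j,k}$---are exactly those assumed to lie in $\mathcal{G}$ by \eqref{39f}. So the ``pointwise admissibility'' part is immediate.

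Next I would check the coefficients. The coefficients $\lambda^n\sigma^+_{j\mp\frac12,k}$, $-\lambda^n\sigma^-_{j\pm\frac12,k}$, $\mu^n\sigma^+_{j,k\mp\frac12}$, $-\mu^n\sigma^-_{j,k\pm\frac12}$, together with $\lambda^n(\sigma^+_{j+\frac12,k}-\sigma^-_{j-\frac12,k})$ and $\mu^n(\sigma^+_{j,k+\frac12}-\sigma^-_{j,k-\frac12})$, are all manifestly non-negative by the sign conventions $\sigma^+\ge 0$ and $\sigma^-\le 0$ built into \eqref{33}. The remaining four ``bulk'' coefficients have the form $\frac{\lambda^n\alpha_1}{2(\lambda^n\alpha_1+\mu^n\alpha_2)}-\lambda^n(\sigma^+_{j\pm\frac12,k}-\sigma^-_{j\pm\frac12,k})$ and its $y$-analog. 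Using the definitions of $\alpha_1,\alpha_2$ in \eqref{38}, these are bounded below by $\frac{\lambda^n\alpha_1}{2(\lambda^n\alpha_1+\mu^n\alpha_2)}-\lambda^n\alpha_1$, which is non-negative precisely when $\lambda^n\alpha_1+\mu^n\alpha_2\le\frac12$; this is exactly the CFL condition \eqref{38}. A short bookkeeping calculation then shows that the $x$-direction coefficients sum to $\frac{\lambda^n\alpha_1}{\lambda^n\alpha_1+\mu^n\alpha_2}$ and the $y$-direction coefficients sum to $\frac{\mu^n\alpha_2}{\lambda^n\alpha_1+\mu^n\alpha_2}$, so the total is $1$ (this was already implicit in the proof of \Cref{thm31} but is worth noting).

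There is no real obstacle here: the only minor bookkeeping point is keeping track of the sign conventions on the one-sided speeds and verifying that the ``bulk'' coefficients are controlled by the CFL condition via the definitions of $\alpha_1,\alpha_2$. Once that is in place, the conclusion follows from the convexity of $\mathcal{G}$: $\,\xbar\bU^{\,n+1}_{j,k}$, being a convex combination of points of $\mathcal{G}$, itself belongs to $\mathcal{G}$, completing the proof of the corollary.
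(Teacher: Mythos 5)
Your proposal is correct and follows exactly the paper's route: the corollary is an immediate consequence of the convex decomposition \eqref{37} established in \Cref{thm31} together with the convexity of ${\G}$ (the paper states this in one line, while you re-verify the nonnegativity and summation-to-one of the coefficients, which is already part of the content of \Cref{thm31}).
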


Inspired by \Cref{cor31}, we simplify the challenging goal of designing 2-D BPCU schemes into four more accessible tasks, that is, modifying
the 2-D CU schemes \eqref{29}--\eqref{35} in such a way that the following four essential conditions are satisfied:
\begin{itemize}[leftmargin=43mm]
	\setlength\itemsep{0mm}
	\item[\bf 2-D BP Condition \#1:] If $\,\xbar\bU_{j,k}^{\,n}\in{\G}$ for all $j,k$, then
	$\bU^\pm_{j+\frac12,k},\bU^\pm_{j,k+\frac12}\in{\G}$ for all $j,k$;
	\item[\bf 2-D BP Condition \#2:] If $\bU^\pm_{j+\frac12,k},\bU^\pm_{j,k+\frac12}\in{\G}$ for all $j,k$, then
	$\bU^*_{j+\frac12,k},\bU^*_{j,k+\frac12}\in{\G}$ for all $j,k$;
	\item[\bf 2-D BP Condition \#3:] If $\bU^\pm_{j+\frac12,k},\bU^\pm_{j,k+\frac12}\in{\G}$ for all $j,k$, then
	$\bU^{*,x}_{j,k},\bU^{*,y}_{j,k}\in{\G}$ for all $j,k$;
	\item[\bf 2-D BP Condition \#4:] If $\bU^\pm_{j+\frac12,k},\bU^\pm_{j,k+\frac12},\bU^*_{j+\frac12,k},\bU^*_{j,k+\frac12}\in{\G}$ for all
	$j,k$, then $\bU^{*,\pm}_{j+\frac12,k},\bU^{*,\pm}_{j,k+\frac12}\in{\G}$ for all $j,k$.
\end{itemize}

In \S\ref{sec31}, we will follow the above BP framework and design provably BPCU for the 2-D Euler equations of gas dynamic.

\subsection{BPCU schemes for 2-D  compressible Euler equations}\label{sec31}
The 2-D Euler equations of gas dynamics for ideal gases read as \eqref{28} with
\begin{equation}
	\begin{aligned}
		&\bU=\big(\rho,\rho v_1,\rho v_2,E\big)^\top,\quad\bF(\bU)=\big(\rho v_1,\rho v_1^2+p,\rho v_1v_2,(E+p)v_1\big)^\top,\\
		&\bG(\bU)=\big(\rho v_2,\rho v_1v_2,\rho v_2^2+p,(E+p)v_2\big)^\top,\quad E=\frac12\rho(v_1^2+v_2^2)+\frac{p}{\gamma-1}.
	\end{aligned}
	\label{39}
\end{equation}
Here, $v_1$ and $v_2$ are the $x$- and $y$-velocities, respectively, and the rest of notations are the same as in \S\ref{sec23}. The
eigenvalues of the Jacobians $A=\partial\bF/\partial\bU$ and $B=\partial\bG/\partial\bU$ are 
$$
\begin{aligned}
	&\lambda_1\big(A(\bU)\big)=v_1-c,\quad\lambda_2\big(A(\bU)\big)=\lambda_3\big(A(\bU)\big)=v_1,\quad\lambda_4\big(A(\bU)\big)=v_1+c,\\
	&\lambda_1\big(B(\bU)\big)=v_2-c,\quad\lambda_2\big(B(\bU)\big)=\lambda_3\big(B(\bU)\big)=v_2,\quad\lambda_4\big(B(\bU)\big)=v_2+c,
\end{aligned}
$$
The set of admissile states,
\begin{equation}
	{\G}=\Big\{\,\bU\in\mathbb R^4: \rho>0,~p(\bU)=(\gamma-1)\Big(E-\frac12\rho(v_1^2+v_2^2)\Big)>0\,\Big\},
	\label{41}
\end{equation}
is a convex invariant region for the 2-D Euler equations of gas dynamics since the pressure function $p(\bU)$ is concave with respect to
$\bU$ when $\rho>0$.

The key components in the construction of the 2-D BPCU scheme for the 2-D Euler equations of gas dynamics are presented below. First, the
limited one-sided point values $\bU^\mp_{j\pm\frac12,k}$ and $\bU^\mp_{j,k\pm\frac12}$ defined in \eqref{31}--\eqref{32}, are replaced with
\begin{equation}
	\tilde\bU^\mp_{j\pm\frac12,k}=\big(1-\delta^x_{j,k}\big)\,\xbar\bU_{j,k}^{\,n}+\delta^x_{j,k}\,\bU^\mp_{j\pm\frac12,k},\quad
	\tilde\bU^\mp_{j,k\pm\frac12}=\big(1-\delta^y_{j,k}\big)\,\xbar\bU_{j,k}^{\,n}+\delta^y_{j,k}\,\bU^\mp_{j,k\pm\frac12},
	\label{42}
\end{equation}
where
\begin{equation}
	\begin{aligned}
		&\delta^x_{j,k}:=\min\bigg\{\frac{p(\,\xbar\bU_{j,k}^{\,n})-\hat\varepsilon_{j,k}}
		{p(\,\xbar\bU_{j,k}^{\,n})-\hat p^x_{j,k}},1\bigg\},\quad
		\delta^y_{j,k}:=\min\bigg\{\frac{p(\,\xbar\bU_{j,k}^{\,n})-\hat\varepsilon_{j,k}}{p(\,\xbar\bU_{j,k}^{\,n})-\hat p^y_{j,k}},1\bigg\},\\
		&\hat p^x_{j,k}:=\min\big\{p\big(\bU^+_{j-\frac12,k}\big),p\big(\bU^-_{j+\frac12,k}\big)\big\},\quad
		\hat p^y_{j,k}:=\min\big\{p\big(\bU^+_{j,k-\frac12}\big),p\big(\bU^-_{j,k+\frac12}\big)\big\},\\
		&\hat\varepsilon_{j,k}:=\min\big\{10^{-13},p(\,\xbar\bU_{j,k}^{\,n})\big\}.
	\end{aligned}
	\label{43}
\end{equation}
It can be verified that the values $\tilde\bU^\pm_{j+\frac12,k}$ and $\tilde\bU^\pm_{j,k+\frac12}$ defined in \eqref{42}--\eqref{43} belong
to ${\G}$.

The corrected reconstructed values are then used to evaluate the one-sided local speeds of propagation:
\begin{equation}
	\begin{aligned}
		&\sigma^-_{j+\frac12,k}=\min\big\{(v_1)^-_{j+\frac12,k}-c^-_{j+\frac12,k},(v_1)^+_{j+\frac12,k}-c^+_{j+\frac12,k},0\big\},\\
		&\sigma^+_{j+\frac12,k}=\max\big\{(v_1)^-_{j+\frac12,k}+c^-_{j+\frac12,k},(v_1)^+_{j+\frac12,k}+c^+_{j+\frac12,k},0\big\},\\
		&\sigma^-_{j,k+\frac12}=\min\big\{(v_2)^-_{j,k+\frac12}-c^-_{j,k+\frac12},(v_2)^+_{j,k+\frac12}-c^+_{j,k+\frac12},0\big\},\\
		&\sigma^+_{j,k+\frac12}=\max\big\{(v_2)^-_{j,k+\frac12}+c^-_{j,k+\frac12},(v_2)^+_{j,k+\frac12}+c^+_{j,k+\frac12},0\big\}.
	\end{aligned}
	\label{44}
\end{equation}

Finally, the ``build-in'' anti-diffusion terms $\bd_{j+\frac12,k}$ and $\bd_{j,k+\frac12}$ defined in \eqref{34}, are replaced with
\begin{equation}
	\tilde\bd_{j+\frac12,k}=\beta_{j+\frac12,k}\bd_{j+\frac12,k},\quad\tilde\bd_{j,k+\frac12}=\beta_{j,k+\frac12}\bd_{j,k+\frac12},
	\label{45}
\end{equation}
where
\begin{equation}
	\begin{aligned}
		&\beta_{j+\frac12,k}:=\min\bigg\{\frac{p\big(\bU^*_{j+\frac12,k}\big)-\hat\varepsilon_{j+\frac12,k}}
		{p\big(\bU^*_{j+\frac12,k}\big)-\hat p_{j+\frac12,k}},1\bigg\},&&
		\beta_{j,k+\frac12}:=\min\bigg\{\frac{p\big(\bU^*_{j,k+\frac12}\big)-\hat\varepsilon_{j,k+\frac12}}
		{p\big(\bU^*_{j,k+\frac12}\big)-\hat p_{j,k+\frac12}},1\bigg\},\\
		&\hat\varepsilon_{j+\frac12,k}:=\min\big\{10^{-13},p\big(\bU^*_{j+\frac12,k}\big)\big\},&&
		\hat\varepsilon_{j,k+\frac12}:=\min\big\{10^{-13},p\big(\bU^*_{j,k+\frac12}\big)\big\},\\
		&\hat p_{j+\frac12,k}:=\min\big\{p\big(\bU^{*,+}_{j+\frac12,k}\big),p\big(\bU^{*,-}_{j+\frac12,k}\big)\big\},&&
		\hat p_{j,k+\frac12}:=\min\big\{p\big(\bU^{*,+}_{j,k+\frac12}\big),p\big(\bU^{*,-}_{j,k+\frac12}\big)\big\},
	\end{aligned}
	\label{46}
\end{equation}
and $\bU_{j+\frac12,k}^{*,\pm}$ and $\bU_{j+\frac12,k}^{*,\pm}$ are defined in \eqref{36}.
\begin{theorem}\label{thm32}
	For the 2-D Euler equations of gas dynamics \eqref{28}, \eqref{39} with the invariant region \eqref{41}, the CU scheme
	\eqref{29}--\eqref{32}, \eqref{34}--\eqref{36} with the corrections \eqref{42}--\eqref{46} is BP under the CFL condition \eqref{38}.
\end{theorem}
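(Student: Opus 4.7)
The plan is to apply \Cref{cor31}, which reduces the bound-preserving property of the modified 2-D CU scheme to verifying all six membership claims in \eqref{39f}, equivalently the four 2-D BP Conditions listed after that corollary. I would check each condition in turn, reusing the 1-D machinery of \S\ref{sec23} as much as possible, with the understanding that each $x$-direction argument has a symmetric $y$-direction twin.

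First, I would verify BP Condition \#1 for the corrected one-sided values $\tilde\bU^\mp_{j\pm\frac12,k}$ and $\tilde\bU^\mp_{j,k\pm\frac12}$ defined in \eqref{42}--\eqref{43}. Since $0\le\delta^x_{j,k},\delta^y_{j,k}\le 1$ and the minmod reconstruction preserves positivity of the density, $\tilde\rho^\mp_{j\pm\frac12,k}>0$ and $\tilde\rho^\mp_{j,k\pm\frac12}>0$. The concavity of $p(\bU)$ on $\{\rho>0\}$ together with Jensen's inequality, applied separately in the $x$- and $y$-sweeps, reproduces the argument of \Cref{lem27} verbatim and yields $p(\tilde\bU^\mp_{j\pm\frac12,k})\ge\hat\varepsilon_{j,k}>0$ with the analogous bound in the $y$-direction.

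Next, I would address BP Conditions \#2 and \#3 by establishing the 2-D counterparts of \Cref{lem28,lem29}: for the 2-D Euler flux $\bF$, if $\bU^\pm\in\G$, $\sigma^+\ge v_1^++c^+$, $\sigma^-\le v_1^--c^-$, and $\sigma^+>\sigma^-$, then $R_f(\sigma^+,\sigma^-,\bU^+,\bU^-)\in\G$, and the symmetric statement for $\bG$ with $v_2\pm c$ and $R_g$. The proof would use the 4-component GQL representation of \eqref{41} with auxiliary vectors $\bm n_*=(\tfrac12(v_{*,1}^2+v_{*,2}^2),-v_{*,1},-v_{*,2},1)^\top$, for which the key inequality $c\,\bU\cdot\bm n_*>p\,|v_\ell-v_{*,\ell}|$ (for $\ell=1,2$) should still hold, since only the $v_\ell$-component of the relevant flux contributes to the sign-sensitive part of the inner product. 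The speed estimates \eqref{44} then guarantee the hypotheses at both $(\bU^-_{j+\frac12,k},\bU^+_{j+\frac12,k})$ (yielding $\bU^*_{j+\frac12,k}\in\G$) and $(\bU^+_{j-\frac12,k},\bU^-_{j+\frac12,k})$ (yielding $\bU^{*,x}_{j,k}\in\G$), after noting that $\sigma^+_{j+\frac12,k}\ge v_1^-+c^-$ at $x_{j+\frac12}$ and $\sigma^-_{j-\frac12,k}\le v_1^+-c^+$ at $x_{j-\frac12}$; the $y$-direction cases follow symmetrically.

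Finally, I would verify BP Condition \#4 by copying the proof of \Cref{lem211}. The modified anti-diffusion \eqref{45}--\eqref{46} lets us rewrite $\tilde\bU^{*,\pm}_{j+\frac12,k}=(1-\beta_{j+\frac12,k})\,\bU^*_{j+\frac12,k}+\beta_{j+\frac12,k}\,\bU^{*,\pm}_{j+\frac12,k}$, and the minmod comparison bounds the density component from below by $\min\{\rho^-_{j+\frac12,k},\rho^+_{j+\frac12,k},\rho^*_{j+\frac12,k}\}>0$, while Jensen's inequality together with the specific choice of $\beta_{j+\frac12,k}$ in \eqref{46} forces $p(\tilde\bU^{*,\pm}_{j+\frac12,k})\ge\hat\varepsilon_{j+\frac12,k}>0$; the $y$-direction states are handled identically. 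Combining all four conditions with the CFL restriction \eqref{38} and invoking \Cref{cor31} completes the argument. The main obstacle I expect is the clean formulation of the 4-component GQL representation and a careful check that the inequality underlying \Cref{lem28} survives the presence of a nonzero tangential velocity in each directional split; once this is settled, the remaining verifications are essentially mechanical extensions of the 1-D arguments.
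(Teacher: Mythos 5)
Your proposal is correct and follows essentially the same route as the paper: invoke \Cref{cor31} and verify the four 2-D BP conditions by direction-by-direction analogues of \Cref{lem27,lem210,lem211}, which is exactly what the paper's (much terser) proof does. Your explicit four-component GQL vector $\bm n_*=(\tfrac12(v_{*,1}^2+v_{*,2}^2),-v_{*,1},-v_{*,2},1)^\top$ and the observation that $v_\ell\bU-\bF(\bU)$ (resp.\ $\bG$) contributes only $\mp p(v_\ell-v_{*,\ell})$ to the inner product are precisely the details the paper leaves implicit, and they check out.
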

\begin{proof}
	Similarly to \Cref{lem27}, it can be proven that the point values $\tilde\bU^\mp_{j\pm\frac12,k}$ and $\tilde\bU^\mp_{j,k\pm\frac12}$
	defined in \eqref{42}--\eqref{43} satisfy the 2-D BP Condition \#1. Analogously to \Cref{lem210}, one can show that the one-sided local
	speeds of propagation $\sigma^\pm_{j+\frac12,k}$ and $\sigma^\pm_{j,k+\frac12}$ given in \eqref{44} satisfy the 2-D BP Conditions \#2 and
	\#3. Similarly to \Cref{lem211}, one can prove that the ``built-in'' anti-diffusion terms $\tilde\bd_{j+\frac12,k}$ and
	$\tilde\bd_{j,k+\frac12}$ defined in \eqref{45}--\eqref{46} satisfy the 2-D BP Condition \#4. Thus, all of the the conditions in \eqref{39f}
	are satisfied, which, according to \Cref{thm31}, the CU schemes \eqref{29}--\eqref{32}, \eqref{34}--\eqref{36} with the corrections
	\eqref{42}--\eqref{46} are BP under the CFL condition \eqref{38}.
\end{proof}

\section{Numerical Experiments}\label{sec4}
In this section, we conduct several numerical experiments to examine the accuracy and robustness of the proposed BPCU schemes, as well as
its capability of preserving the invariant region ${\G}$. The performance of BPCU schemes will be compared with the performance of the
following alternative methods:

\noindent
$\bullet$ {\em Scheme 1:} the original CU scheme;

\noindent
$\bullet$ {\em Scheme 2:} the BPCU scheme with the anti-diffusion term switched off by setting $\beta=0$ in \eqref{26} and \eqref{45};

\noindent
$\bullet$ {\em Scheme 3:} the second-order semi-discrete finite-volume scheme with a global Lax-Friedrichs numerical flux, which is obtained
from Scheme 2 by replacing the one-sided local speeds of propagation $\sigma^\pm$ with $\pm\sigma_{\max}$, where $\sigma_{\max}$ denotes the
global maximum characteristic speed over the entire computational domain.

\smallskip
We take the minmod limiter parameter $\theta=1.3$ and employ the two-stage second-order explicit SSP Runge-Kutta method (the Heun method)
\cite{GottliebKetchesonShu2011,GST} for time discretization in all of the reported numerical experiments. The specific heat ration is
either $\gamma=1.4$ (\Cref{ex41,ex42,ex43,ex46,ex47}) or $\gamma=5/3$ (\Cref{ex44,ex45}).

\begin{example}\label{ex41}
	In the first example, we verify the experimental accuracy of the proposed BPCU scheme on a challenging set of initial data, which leads to a
	globally smooth solution with very small minima of both $\rho$ and $p$. We simulate the propagation of a supersonic vortex initially
	centered at the origin and propagating with the velocity $(1,1)^\top$. The initial data, prescribed on the computational domain
	$[-5,5]\times[-5,5]$ subject to periodic boundary conditions, are
	$$
	\begin{aligned}
		&\rho(x,y,0)=(1+\delta T)^\frac{1}{\gamma-1},\quad v_i(x,y,0)=1+\delta v_i,~i=1,2,\quad p(x,y,0)=\rho^\gamma(x,y,0),\\
		&(\delta v_1,\delta v_2)=\frac{\varepsilon}{2\pi}e^\frac{1-x^2-y^2}{2}(-y,x),\quad\delta T=-\frac{(\gamma-1)\varepsilon^2}{8\gamma\pi^2}
		e^{(1-x^2-y^2)},
	\end{aligned}
	$$
	where $\varepsilon=10.0828$ is taken to make $\min(\rho(x,y,0))\approx7.8337\times10^{-15}$ and
	$\min(p(x,y,0))\approx1.7847\times10^{-20}$. 
	
	We compute the solution until time $t=0.05$ by the BPCU scheme on sequence of uniform meshes with $\dx=\dy=1/5$, 1/10, 1/20, 1/40, 1/80, and
	1/160. The obtained $L^1$-errors and corresponding experimental rates of convergence are reported in \Cref{tab1}, demonstrating the
	second-order accuracy as expected. We stress that the BP property of the scheme is crucial in this example. For instance, if we compute the
	solution by Scheme 1 with $\dx=\dy=1/60$, then the computed pressure becomes negative after just one time step.
	\begin{table}[ht!] 
		\centering
		\begin{tabular}{|c|lc|lc|lc|lc|}
			\hline
			\multirow{2}{*}{$\dx=\dy$}&\multicolumn{2}{c|}{$\rho$}&\multicolumn{2}{c|}{$\rho v_1$}&\multicolumn{2}{c|}{$\rho v_2$}&
			\multicolumn{2}{c|}{$E$}\\
			\cline{2-9}
			&Error&\!\!Rate&Error&\!\!Rate&Error&\!\!Rate&Error&\!\!Rate\\
			\hline
			1/5&1.96e-2&\!\!--&4.84e-2&\!\!--&4.74e-2&\!\!--&1.19e-1&\!\!--\\ 
			1/10&6.48e-3&\!\!1.60&1.48e-2&\!\!1.71&1.49e-2&\!\!1.67&3.47e-2&\!\!1.77\\ 
			1/20&2.16e-3&\!\!1.59&4.66e-3&\!\!1.66&4.61e-3&\!\!1.69&1.01e-2&\!\!1.78\\ 
			1/40&6.54e-4&\!\!1.72&1.34e-3&\!\!1.80&1.35e-3&\!\!1.78&3.04e-3&\!\!1.74\\ 
			1/80&1.67e-4&\!\!1.97&3.51e-4&\!\!1.94&3.56e-4&\!\!1.92&8.26e-4&\!\!1.88\\ 
			1/160&3.68e-5&\!\!2.18&8.78e-5&\!\!2.00&8.92e-5&\!\!2.00&2.04e-4&\!\!2.02\\ 
			\hline
		\end{tabular}
		\caption{\sf \Cref{ex41}: $L^1$-errors and corresponding convergence rates for the BPCU scheme.\label{tab1}}
	\end{table}
\end{example}

\begin{example}\label{ex42}
	The goal of the second example, which is a modification of the shock-density wave interaction problem from \cite{SO89}, is to show that the
	``built-in'' anti-diffusion is essential to achieve higher resolution. We consider the initial data
	$$
	(\rho(x,0),v(x,0),p(x,0))=\left\{
	\begin{aligned}
		&(3.857143,-0.920279,10.33333),&&x<0,\\
		&(1+0.2\sin(5x),-3.549648,1),&&x>0,
	\end{aligned}
	\right.
	$$
	prescribed on the computational domain $[-10,10]$ subject to free boundary conditions.
	
	We compute the solutions by the proposed BPCU scheme and by Schemes 2 and 3 until time $t=0.2$ on a uniform mesh with $\dx=1/60$. The
	obtained densities are presented in \Cref{fig3}, where one can clearly see that the BPCU scheme outperforms its counterparts.
	\begin{figure}[ht!] 
		\centerline{\includegraphics[width=0.45\textwidth]{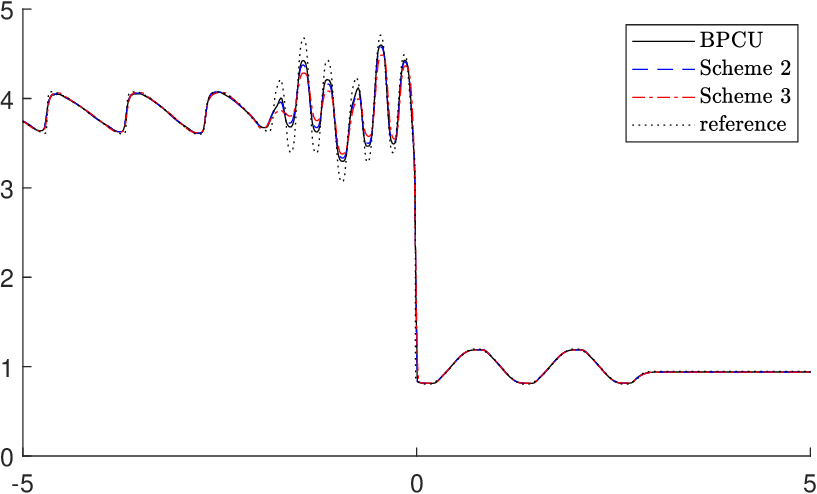}\hspace*{1cm}
			\includegraphics[width=0.45\textwidth]{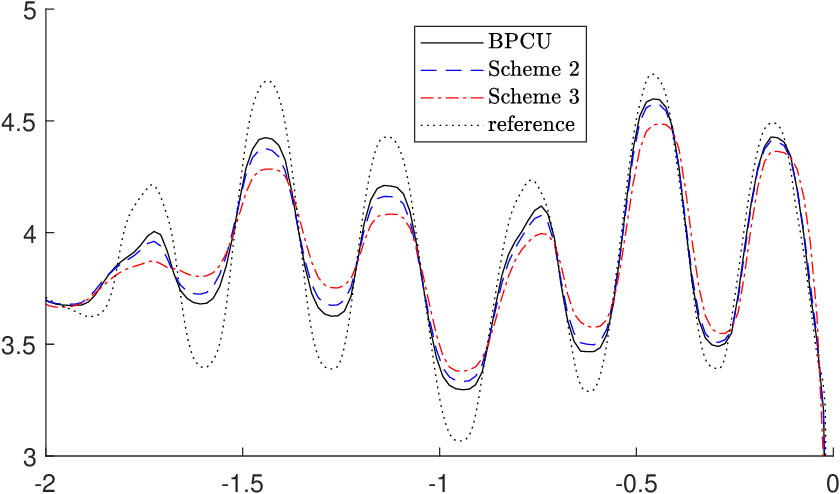}}
		\caption{\sf\Cref{ex42}: Density ($\rho$) computed by the BPCU scheme and Schemes 2 and 3 (left) and zoom at the smooth part of $\rho$
			(right).\label{fig3}}
	\end{figure}
\end{example}

\begin{example}\label{ex43}
	We consider the ``123 problem'' from \cite{EinfeldtMunzRoe1991}: this is a Riemann problem, whose exact solution consists of two rarefaction
	waves and a near-vacuum region between them. The initial data,
	$$
	(\rho(x,0),u(x,0),p(x,0))=\left
	\{\begin{aligned}
		&(1,-2,0.15)&&x<0.5,\\
		&(1,2,0.15),&&x>0.5,
	\end{aligned}
	\right.
	$$
	are prescribed on the computational domain $[0,1]$ subject to free boundary conditions.
	
	We compute the solution by the proposed BPCU scheme until time $t=0.15$ on a uniform mesh with $\dx=1/200$. The obtained density and
	pressure are depicted in \Cref{fig4}, which clearly shows two rarefaction waves and the near vacuum region in the middle. We also plot the
	minimum of both density and pressure as functions of time in \Cref{fig4}: these graphs demonstrates that the positivity of density and
	pressure is preserved by the  BPCU scheme. On the other hand, Scheme 1 violates the BP property at the second stage of first Runge-Kutta
	time step by producing a negative pressure value $p\approx-2.43\times10^{-2}$.
	\begin{figure}[ht!] 
		\centerline{\begin{subfigure}[b]{0.46\textwidth}\caption{density at $t=0.15$}
				\centerline{\includegraphics[width=\textwidth]{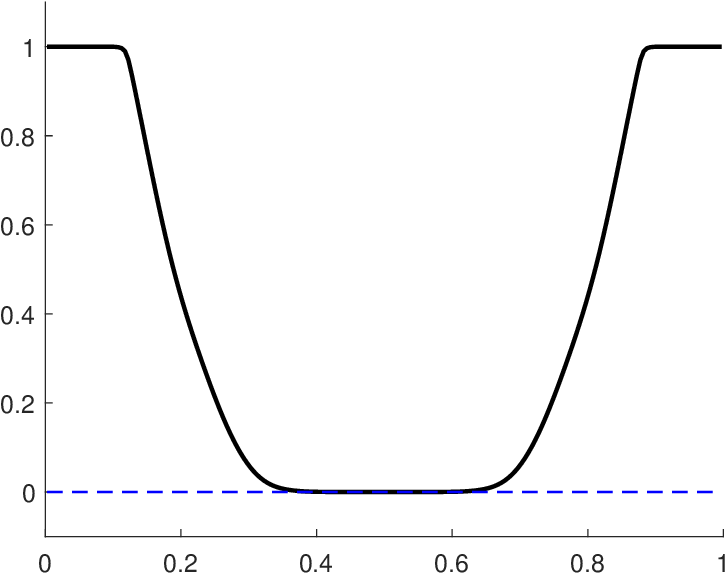}}\end{subfigure}\hspace*{0.2cm}
			\begin{subfigure}[b]{0.46\textwidth}\caption{pressure at $t=0.15$}
				\centerline{\includegraphics[width=\textwidth]{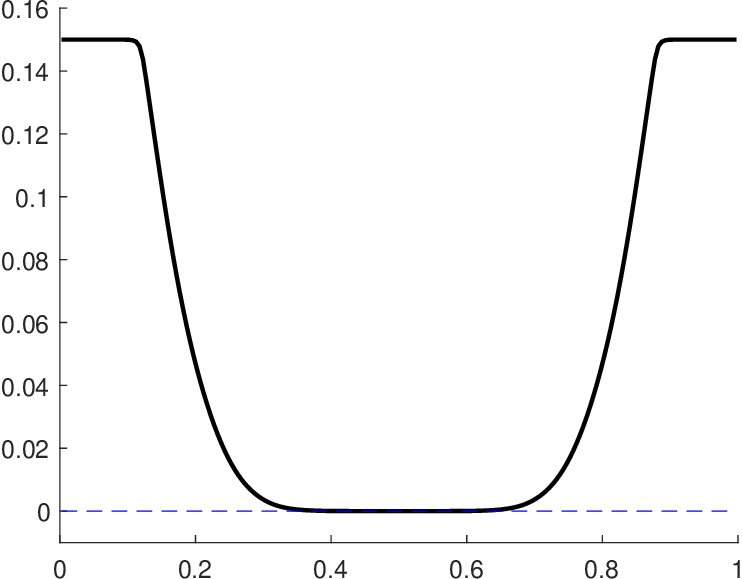}}\end{subfigure}}
		\centerline{\begin{subfigure}[b]{0.46\textwidth}\caption{minimum density as a function of time}
				\centerline{\includegraphics[width=\textwidth]{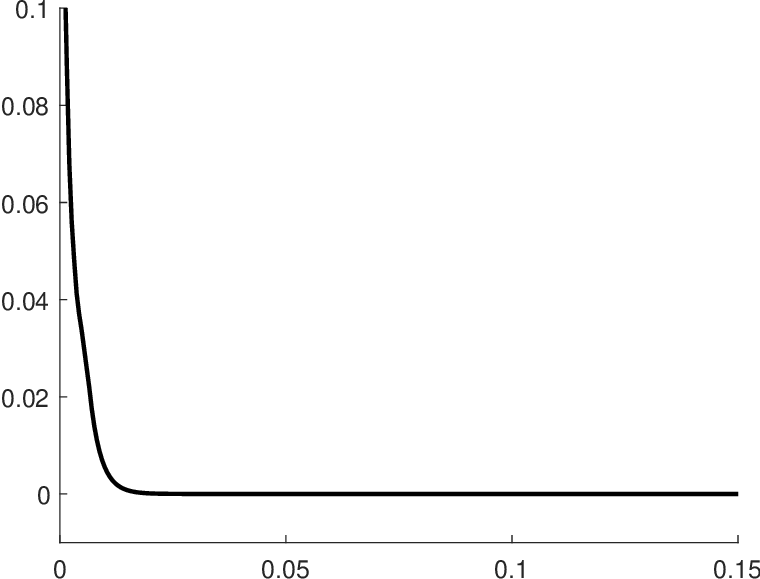}}\end{subfigure}\hspace*{0.2cm}
			\begin{subfigure}[b]{0.46\textwidth}\caption{minimum pressure as a function of time}
				\centerline{\includegraphics[width=\textwidth]{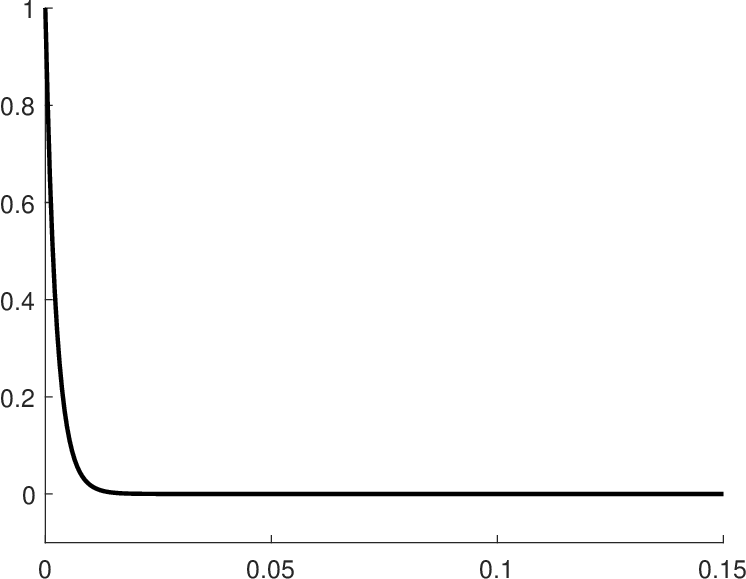}}\end{subfigure}}
		\caption{\sf\Cref{ex43}: Density ($\rho$) and pressure ($p$) computed by the BPCU scheme.\label{fig4}}
	\end{figure}
\end{example}

\begin{example}\label{ex44}
	In this example taken from \cite{ha2005numerical,ZhangShu2010_PP}, we simulate a Mach 80 jet. Initially, the computational domain
	$[0,2]\times[-0.5,0.5]$ is filled with the ambient gas with $\rho(x,y,0)\equiv5$, $v_1(x,y,0)=v_2(x,y,0)\equiv0$, and
	$p(x,y,0)\equiv0.4127$. From the left boundary between $y=-0.05$ and 0.05, a jet of state $(\rho,v_1,v_2,p)=(5,30,0,0.4127)$ is injected into the computational domain, and free boundary conditions are imposed on all other boundaries.
	
	We compute the solution by the BPCU scheme until time $t=0.07$ on a uniform mesh containing $448\times224$ cells. The snapshots of the
	numerical solution at times $t=0.05$ and 0.07, presented in \Cref{fig5}, agree well with those reported in \cite{ZhangShu2010_PP}. The BP
	property of the numerical scheme is crucial in this example: if Scheme 1 is used in this example, negative pressure will appear in the
	numerical result at $t\approx5.277\times10^{-4}$.
	\begin{figure}[ht!] 
		\centerline{\includegraphics[width=0.5\textwidth]{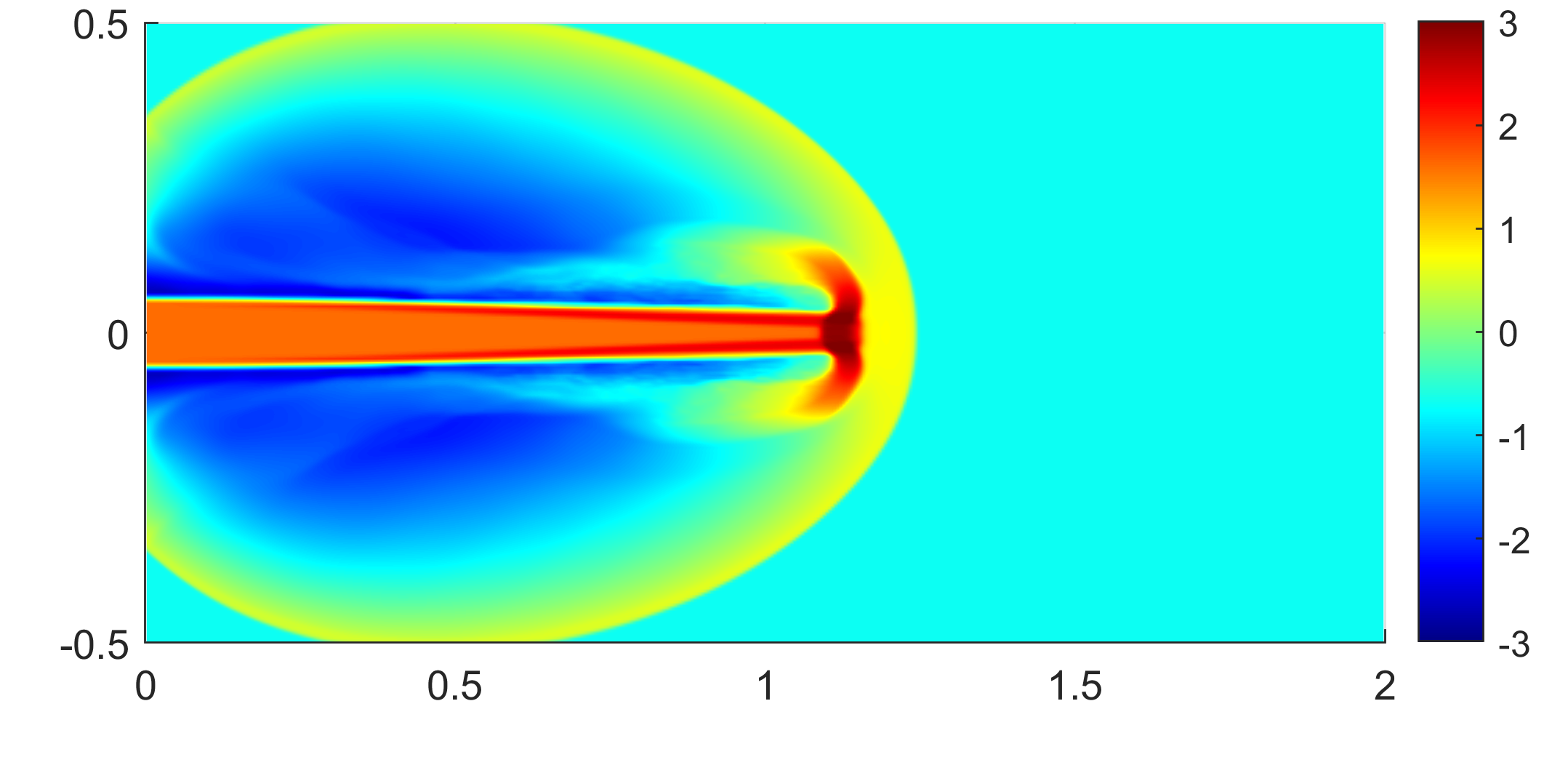}\hspace*{0.2cm}
			\includegraphics[width=0.5\textwidth]{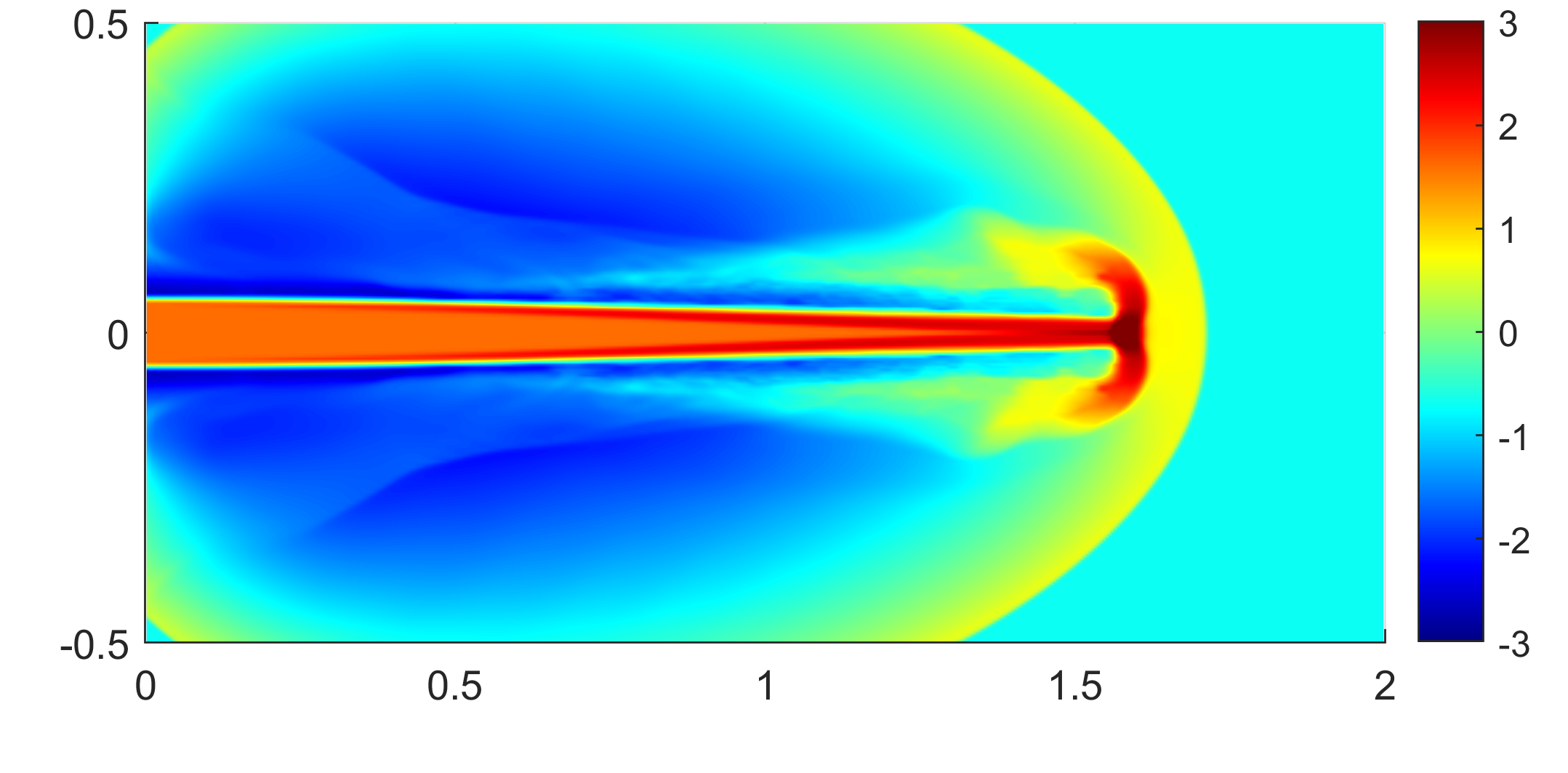}}
		\centerline{\includegraphics[width=0.5\textwidth]{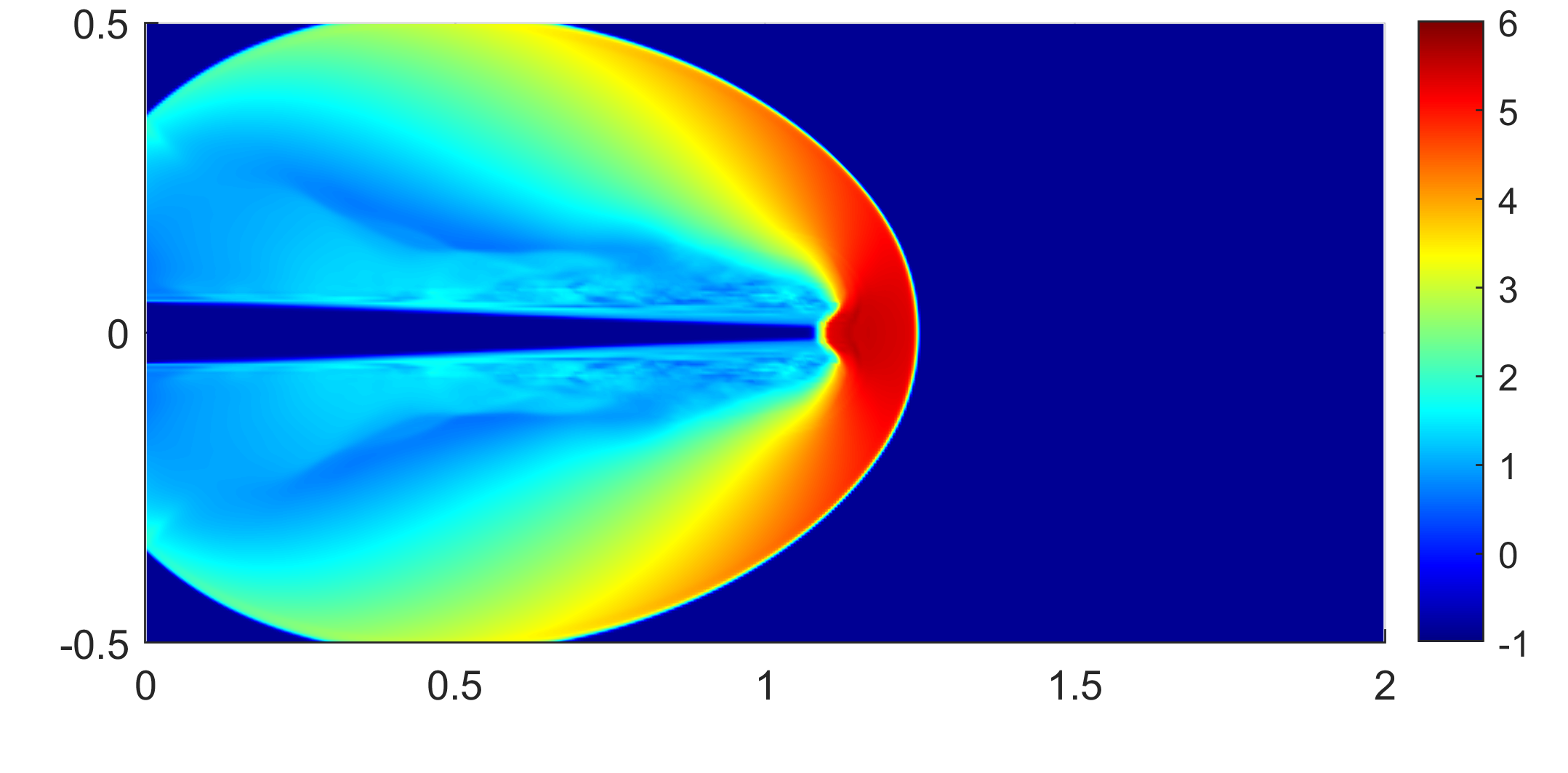}\hspace*{0.2cm}
			\includegraphics[width=0.5\textwidth]{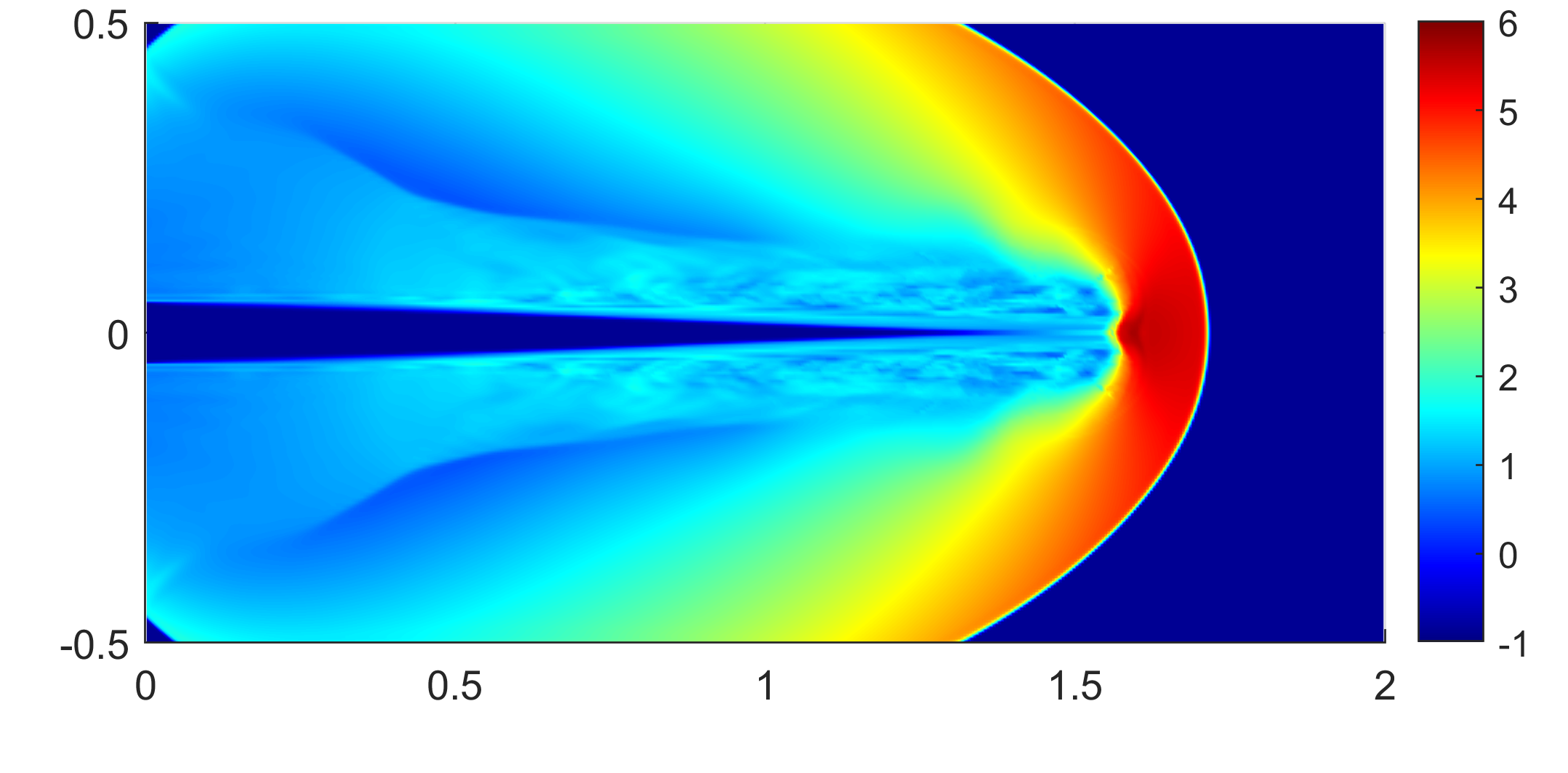}}
		\caption{\sf\Cref{ex44}: Solution ($\ln\rho$ (top row) and $\ln p$ (bottom row)) computed by the BPCU scheme at times $t=0.05$ (left
			column) and 0.07 (right column).\label{fig5}}
	\end{figure}
\end{example}

\begin{example}\label{ex45}
	In this example taken from \cite{ZhangShu2010_PP}, a Mach 2000 jet is considered. The computational domain is $[0,1]\times[-0.25,0.25]$ and
	the initial and boundary conditions are the same as in \Cref{ex44}, except for the jet state which is now
	$(\rho,v_1,v_2,p)=(5,800,0,0.4127)$.
	
	We compute the solution by the BPCU scheme until time $t=0.0015$ on a uniform mesh containing $640\times320$ cells. The snapshots of the
	numerical solution at times $t=0.001$ and 0.0015, presented in \Cref{fig6}, agree well with those reported in \cite{ZhangShu2010_PP}. As in
	\Cref{ex44}, if the BP property is not enforced, that is, if Scheme 1 is used, negative pressure will appear in the numerical solution at
	$t\approx7.15\times10^{-4}$.
	\begin{figure}[ht!]
		\centerline{\includegraphics[width=0.5\textwidth]{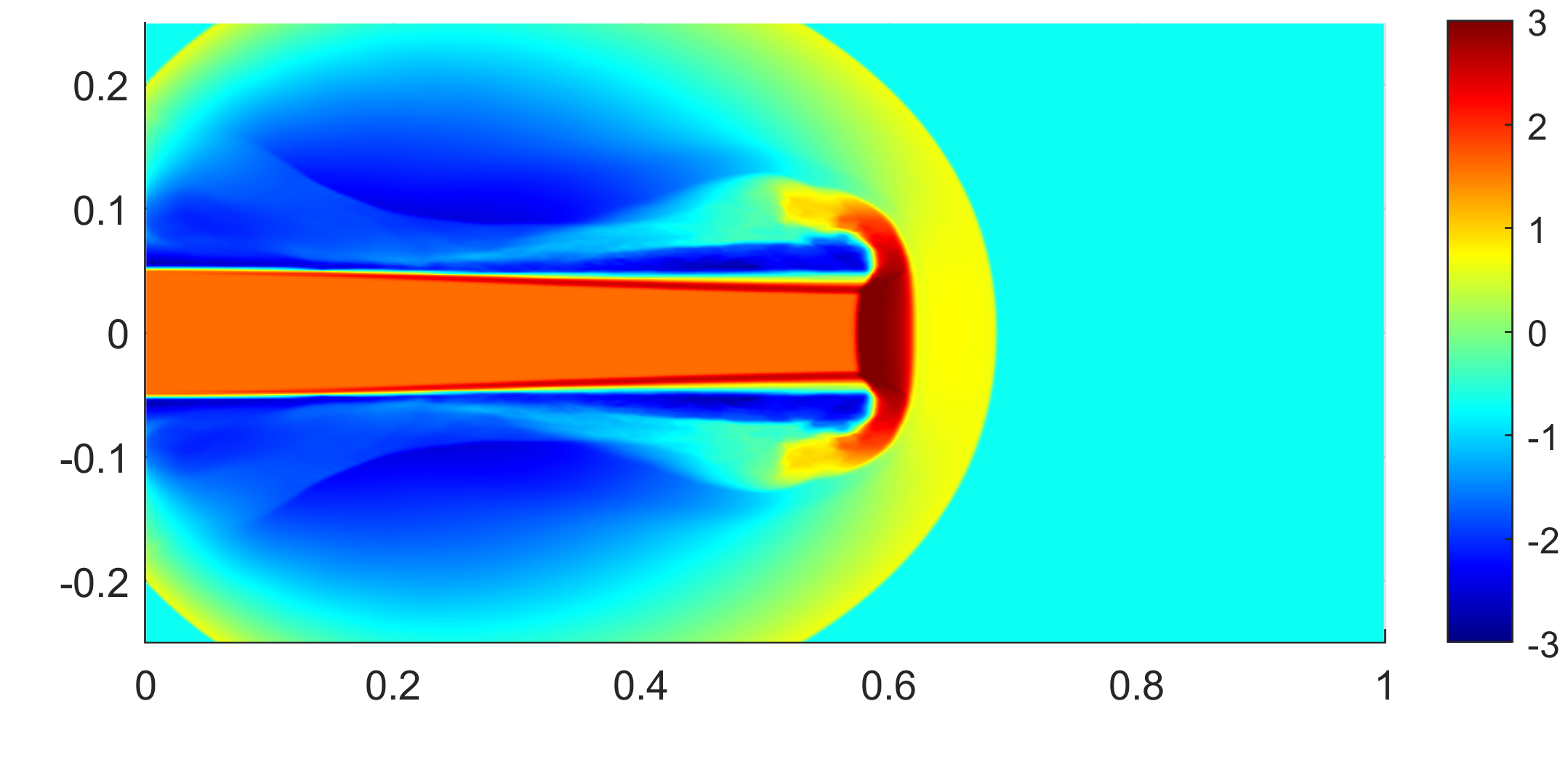}\hspace*{0.2cm}
			\includegraphics[width=0.5\textwidth]{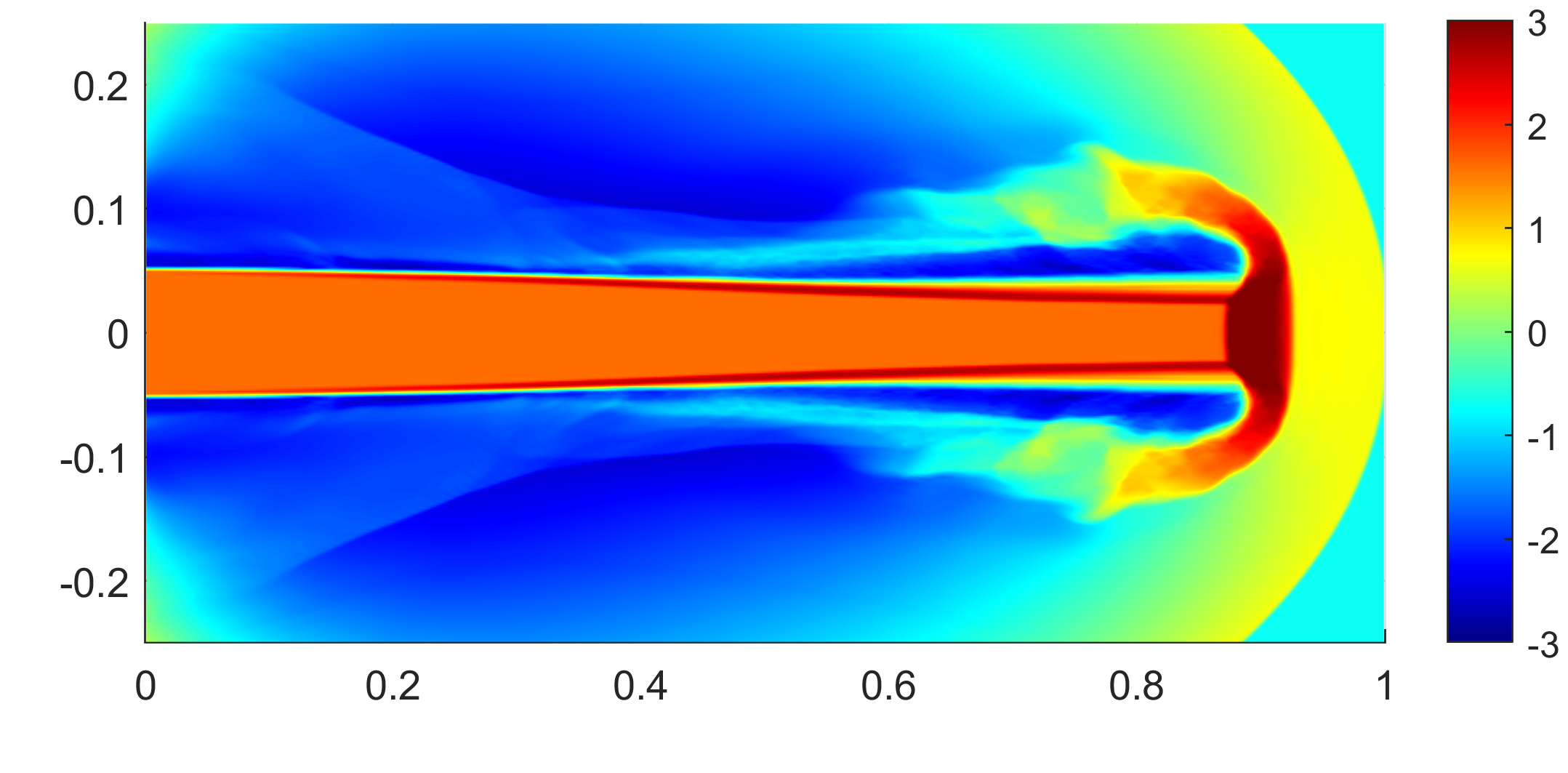}}
		\centerline{\includegraphics[width=0.5\textwidth]{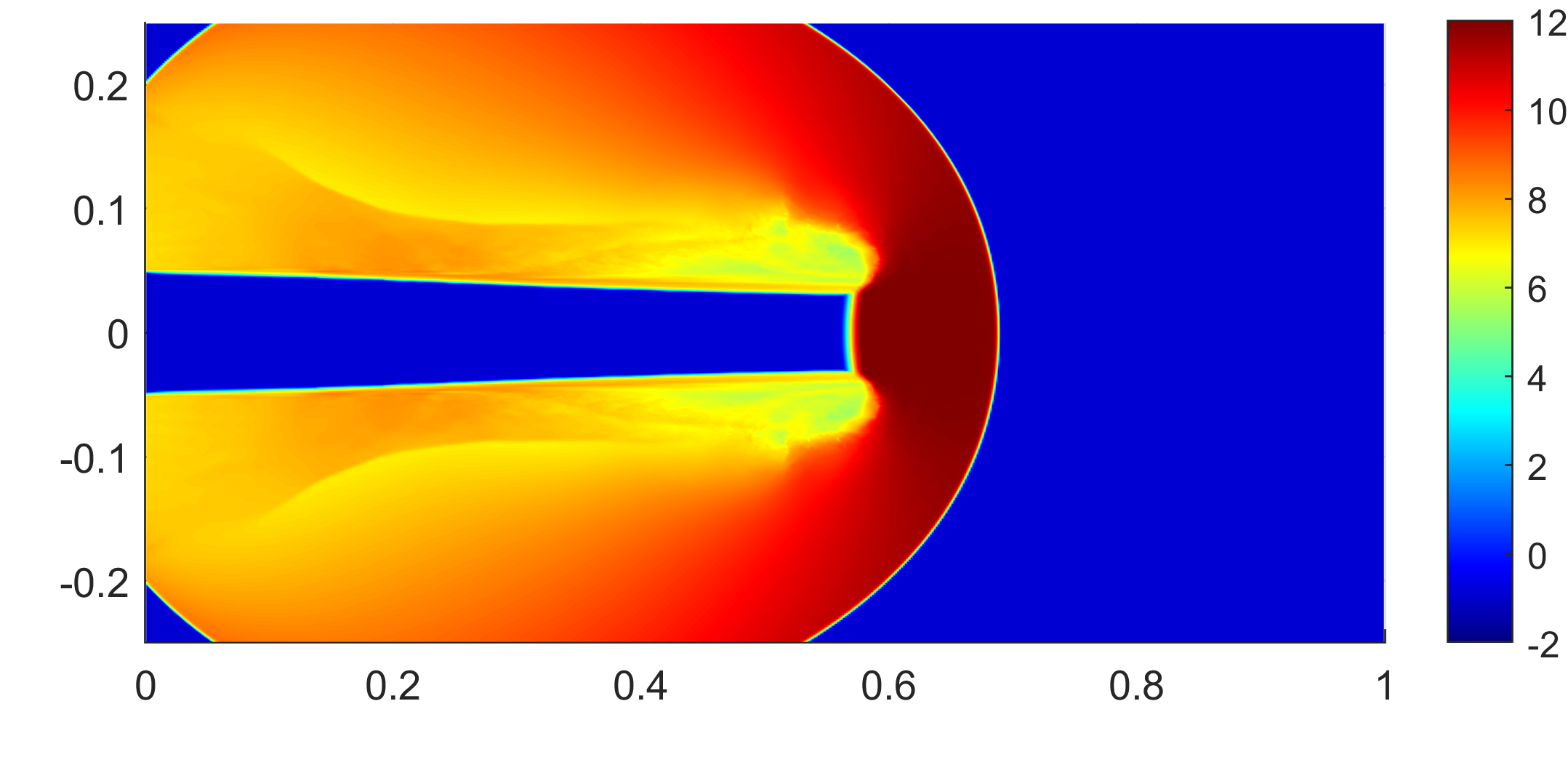}\hspace*{0.2cm}
			\includegraphics[width=0.5\textwidth]{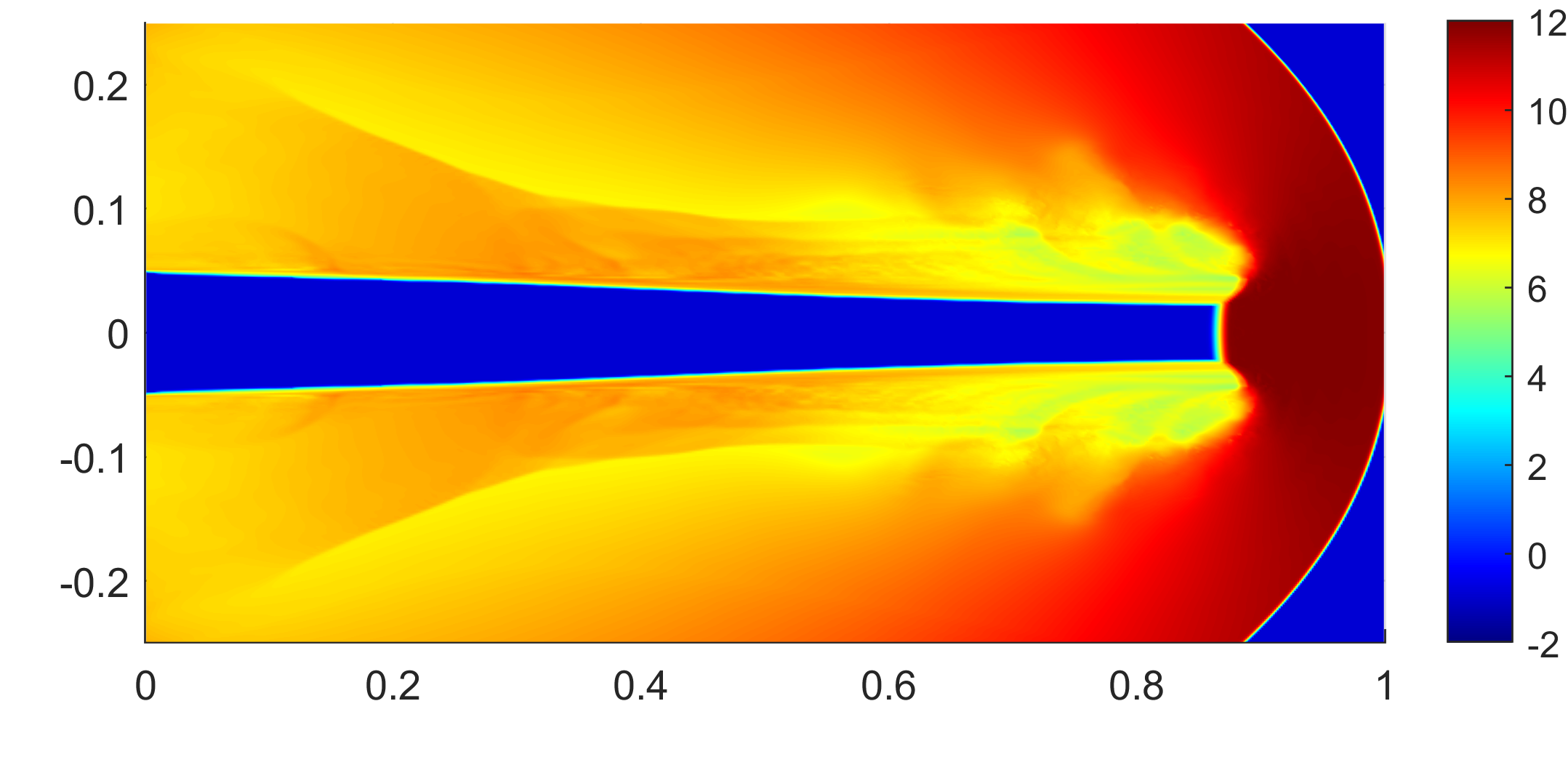}}
		\caption{\sf\Cref{ex45}: Solution ($\ln\rho$ (top row) and $\ln p$ (bottom row)) computed by the BPCU scheme at times $t=0.001$ (left
			column) and 0.0015 (right column).\label{fig6}}
	\end{figure}
\end{example}

\begin{example}\label{ex46}
	This is a shock diffraction benchmark taken from \cite{Quirk1994,CockburnShu1998,ZhangShu2010_PP}. The computational domain is
	$[0,1]\times[6,11]\cup[1,13]\times[0,11]$. At time $t=0$, a pure right-moving shock of Mach number 5.09 is located at $x=0.5$ for
	$y\in[6,11]$. The inflow boundary conditions are imposed at $x=0$ for $y\in[6,11]$, free boundary conditions are used at $x=13$ for
	$y\in[1,11]$, at $y=0$ for $x\in[1,13]$, and at $y=11$ for $x\in[0,13]$, and solid wall boundary conditions are specified at $y=6$ for
	$x\in[0,1]$ and at $x=1$ for $y\in[0,6]$.
	
	Density and pressure computed by the BPCU scheme at time $t=2.3$ on a uniform mesh with $\dx=\dy=1/64$, are presented in
	\Cref{fig7}. The results agree well to those reported in \cite{Quirk1994,CockburnShu1998,ZhangShu2010_PP}. We emphasize that this example is
	challenging due to the very low density and pressure. Numerical schemes that do not satisfy the BP property may generate negative pressure
	and/or density values, leading to failures of computation. For instance, Scheme 1 produces negative pressure at $t\approx0.1074$.
	\begin{figure}[ht!] 
		\centerline{\includegraphics[width=0.5\textwidth,trim = 20 10 35 10,clip]{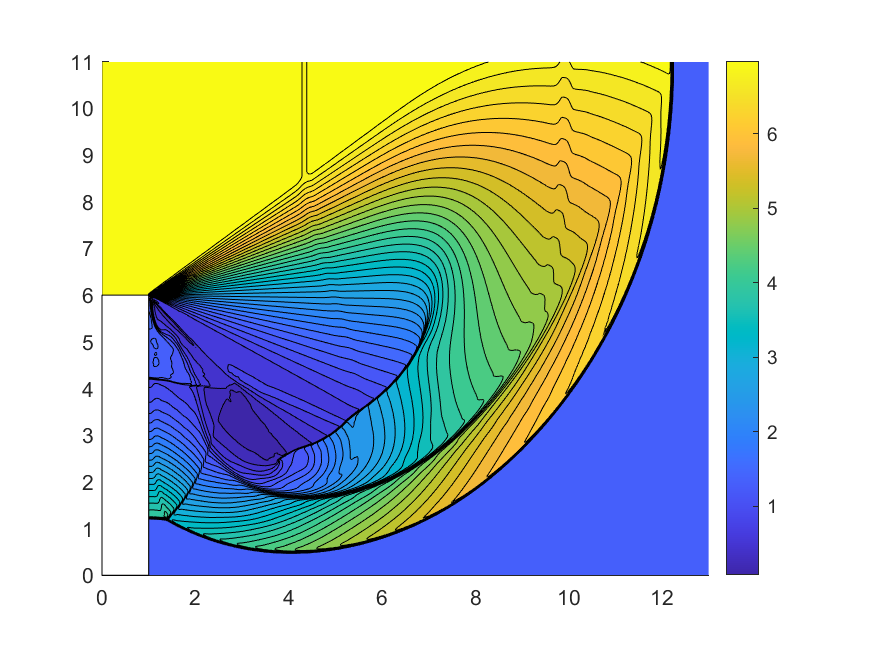}\hspace*{0.2cm}
			\includegraphics[width=0.5\textwidth,trim = 20 10 35 10,clip]{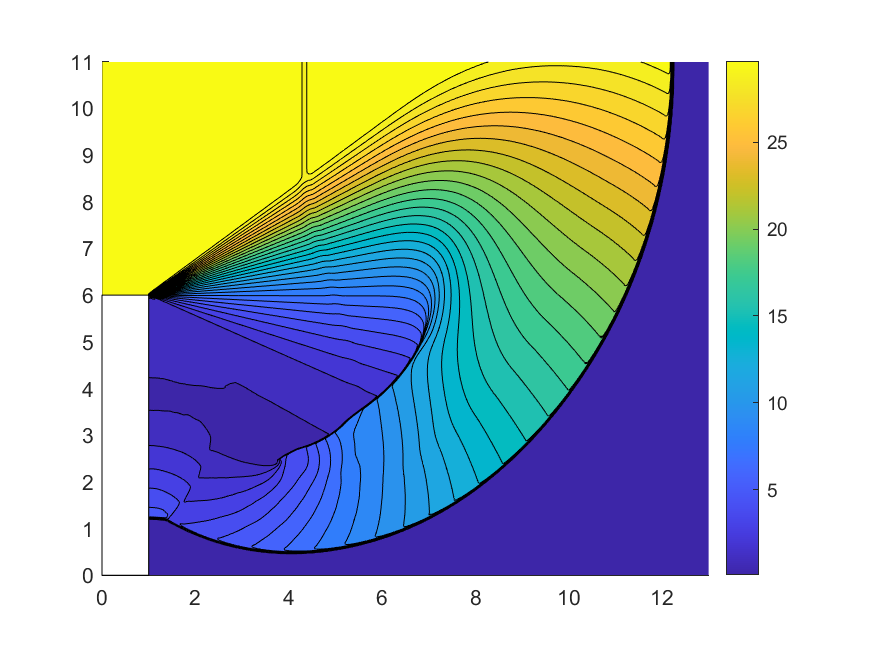}}
		\caption{\sf \Cref{ex46}: Density ($\rho$): 40 equally spaced contour lines from 0.066227 to 7.1568 (left) and pressure ($p$): 40 equally
			spaced contour lines from 0.091 to 37.3 (right) computed by the BPCU scheme.\label{fig7}}
	\end{figure}
\end{example}

\begin{example}\label{ex47}
	The last example is the benchmark from \cite{WC} of flow past a forward facing step. Initially, a right-going Mach 3 uniform flow enters a
	wind tunnel $[0,1]\times[0,3]$, initially filled with a gas with $\rho(x,y,0)\equiv1.4$, $u(x,y,0)\equiv3$, $v(x,y,0)\equiv0$, and
	$p(x,y,0)\equiv1$. The step of height 0.2 is located at $x=0.6$. Solid wall boundary conditions are applied along the walls of the tunnel.
	The inflow and free boundary conditions are applied at the entrance ($x=0$) and the exit ($x=3$), respectively.
	
	We conduct the simulation on two uniform meshes: a coarser (with $\dx=\dy=1/160$) and a finer ($\dx=\dy=1/360$) ones. The density computed
	by the BPCU scheme as well as Scemes 2 and 3 at time $t=4$ are shown in \Cref{fig8}. As one can see, the BPCU scheme achieves higher
	resolution than its counterparts, especially for the slip line issued from the triple point near $(0.6,0.75)$.
	\begin{figure}[ht!] 
		\begin{subfigure}{0.48\textwidth}\caption{$\dx=\dy=1/160$, BPCU scheme}
			\includegraphics[width=\textwidth]{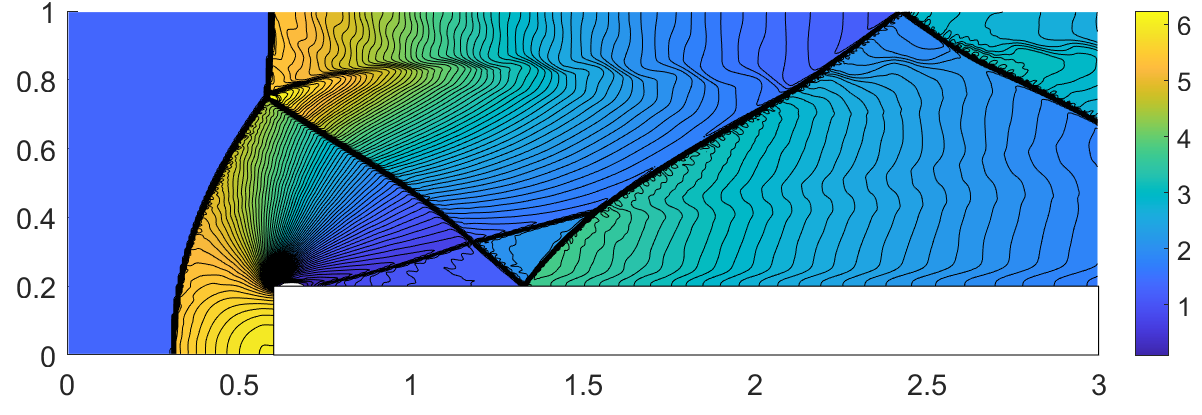}\end{subfigure}\hfill
		\begin{subfigure}{0.48\textwidth}\caption{$\dx=\dy=1/320$, BPCU scheme}
			\includegraphics[width=\textwidth]{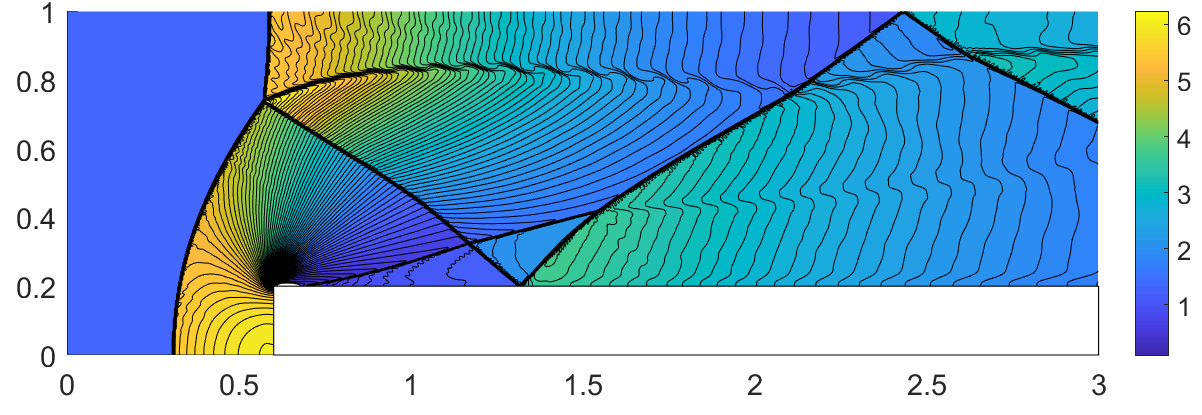}\end{subfigure}
		\begin{subfigure}{0.48\textwidth}\caption{$\dx=\dy=1/160$, Scheme 2}
			\includegraphics[width=\textwidth]{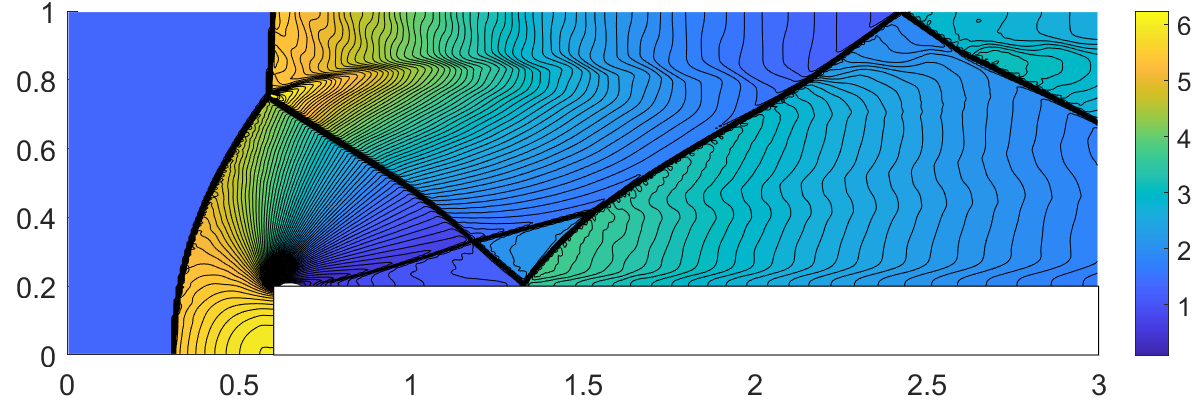}\end{subfigure}\hfill
		\begin{subfigure}{0.48\textwidth}\caption{$\dx=\dy=1/320$, Scheme 2}
			\includegraphics[width=\textwidth]{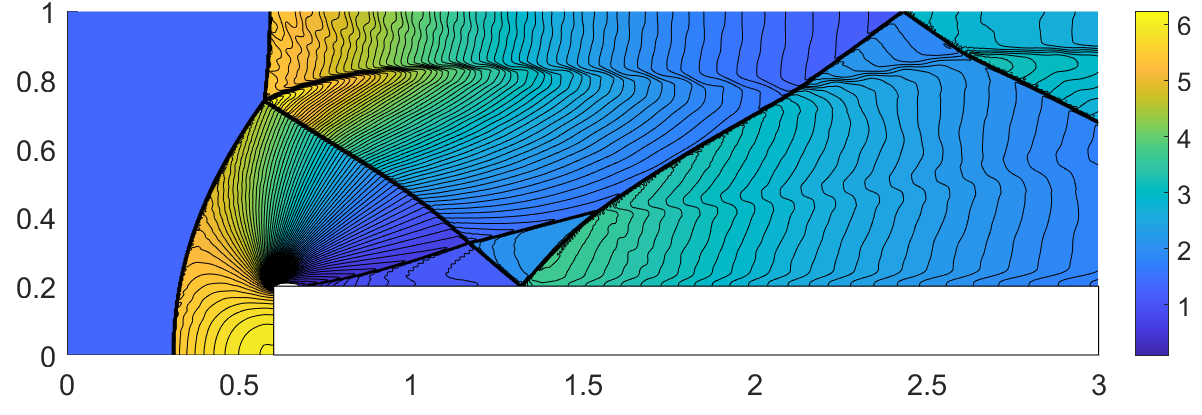}\end{subfigure}
		\begin{subfigure}{0.48\textwidth}\caption{$\dx=\dy=1/160$, Scheme 3}
			\includegraphics[width=\textwidth]{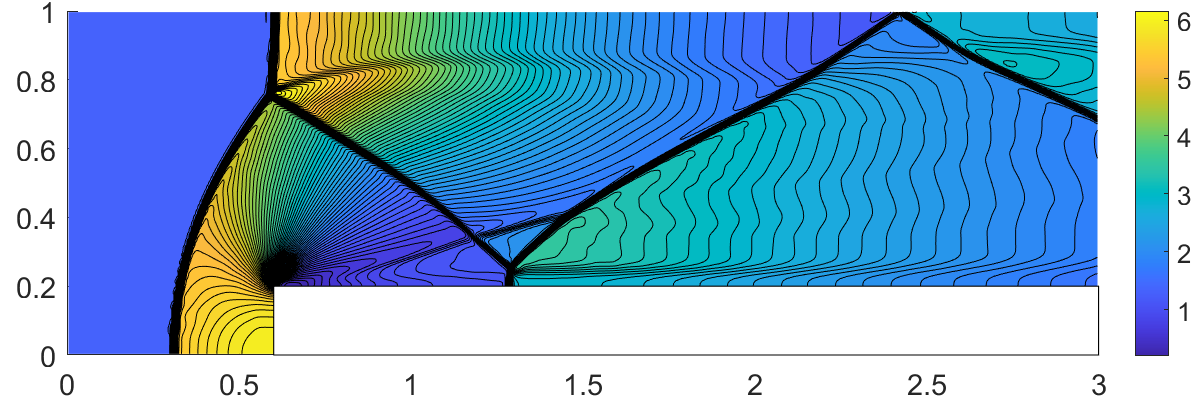}\end{subfigure}\hfill
		\begin{subfigure}{0.48\textwidth}\caption{$\dx=\dy=1/320$, Scheme 3}
			\includegraphics[width=\textwidth]{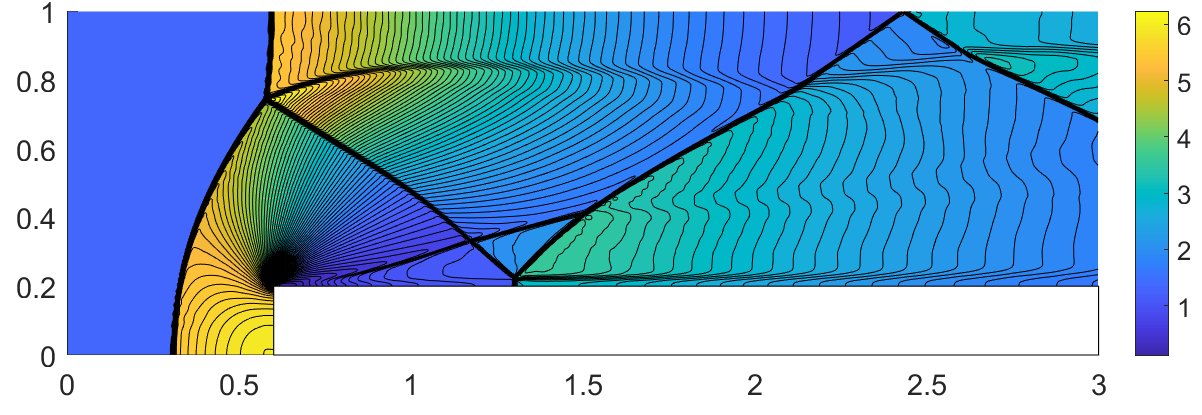}\end{subfigure}
		\caption{\sf Example \ref{ex47}: Density ($\rho$) computed by the BPCU scheme (top row), Scheme 2 (middle row), and Scheme 3 (bottom row) on
			the coarse (left column) and fine (right column) meshes. 80 equally spaced contour lines from $0.090338$ to $6.2365$.\label{fig8}}
	\end{figure}
\end{example}

\section{Conclusions}\label{sec5}
In this paper, we have established a novel framework to analyze and develop bound-preserving (BP) central-upwind (CU) schemes for general
hyperbolic systems of conservation laws. As the foundation of this framework, we have discovered that the CU schemes can be decomposed as a
convex combination of several intermediate solution states. Thanks to this key finding, the goal of designing BPCU schemes was simplified to
the enforcement of four more accessible BP conditions, each of which can be achieved by minor modification to the CU schemes. Our framework
is applicable to general hyperbolic systems of conservation laws and as a specific application, we have employed it to construct provably
BPCU schemes for one- and two-dimensional Euler equations of gas dynamics. The robustness and effectivenss of the proposed BPCU schemes have
been validated by several challenging numerical examples. 

\bibliographystyle{amsplain}

\bibliography{refs}

% ---------------------------------------------------------------------------------------------------------------------------------------------------

\end{document}